\newcommand{\andSep}{\,\,\,\text{ and }\,\,\,}
\newcommand{\CC}{{\mathbb{C}}}
\newcommand{\NN}{{\mathbb{N}}}
\newcommand{\ca}{$C^*$-algebra}
\newcommand{\stHom}{$*$-ho\-mo\-mor\-phism}
\newcommand{\calI}{\mathcal{I}}
\newcommand{\calL}{\mathcal{L}}
\newcommand{\calO}{\mathcal{O}}
\newcommand{\Bdd}{\mathcal{B}}
\DeclareMathOperator{\linSpan}{span}
\DeclareMathOperator{\Nil}{Nil}
\DeclareMathOperator{\FN}{FN}
\DeclareMathOperator{\N}{N}
\DeclareMathOperator{\Cu}{Cu}
\def\today{\number\day\space\ifcase\month\or   January\or February\or
   March\or April\or May\or June\or   July\or August\or September\or
   October\or November\or December\fi\   \number\year}
\newtheorem{lma}{Lemma}[section]
\newaliascnt{thmCt}{lma}
\newtheorem{thm}[thmCt]{Theorem}
\newaliascnt{corCt}{lma}
\newtheorem{cor}[corCt]{Corollary}
\newaliascnt{prpCt}{lma}
\newtheorem{prp}[prpCt]{Proposition}
\theoremstyle{definition}
\newaliascnt{dfnCt}{lma}
\newtheorem{dfn}[dfnCt]{Definition}
\newaliascnt{rmkCt}{lma}
\newtheorem{rmk}[rmkCt]{Remark}
\newaliascnt{exaCt}{lma}
\newtheorem{exa}[exaCt]{Example}
\newaliascnt{qstCt}{lma}
\newtheorem{qst}[qstCt]{Question}
\newaliascnt{pgrCt}{lma}
\newtheorem{pgr}[pgrCt]{}
\newcounter{theoremintro}
\newaliascnt{thmIntroCt}{theoremintro}
\newtheorem{thmIntro}[thmIntroCt]{Theorem}
\newaliascnt{pbmIntroCt}{theoremintro}
\newtheorem{pbmIntro}[pbmIntroCt]{Problem}
\title{Lie ideals in properly infinite C*-algebras}
\author{Hannes Thiel}
\address{Hannes~Thiel, 
Department of Mathematical Sciences, Chalmers University of Technology and University of
Gothenburg, Gothenburg SE-412 96, Sweden.}
\email{hannes.thiel@chalmers.se}
\urladdr{www.hannesthiel.org}
\thanks{
The first named author was partially supported by the Knut and Alice Wallenberg Foundation (KAW 2021.0140).
}
\subjclass[2020]%
{Primary
46L05, % General theory of C*-algebras
46L10. % General theory of von Neumann algebras
Secondary
16W10, % Rings with involution; Lie, Jordan and other nonassociative structures 
17B60, % Lie (super)algebras associated with other structures (associative, Jordan, etc.)
47B47. % Commutators, derivations, elementary operators,
}
\keywords{Lie ideals, $C^*$-algebras, properly infinite, von Neumann algebras, commutators, square-zero elements}
\date{\today}
\begin{document}

%==========================================================================================
\begin{abstract}
We show that every Lie ideal in a unital, properly infinite \ca{} is commutator equivalent to a unique two-sided ideal.
It follows that the Lie ideal structure of such a \ca{} is concisely encoded by its lattice of two-sided ideals.
This answers a question of Robert in this setting.

We obtain similar structure results for Lie ideals in unital, real rank zero \ca{s} without characters.
As an application, we show that every Lie ideal in a von Neumann algebra is related to a unique two-sided ideal, which solves a problem of Bre\v{s}ar, Kissin, and Shulman.
\end{abstract}

\maketitle

%==========================================================================================
%==========================================================================================
\section{Introduction}

%==========================================================================================
A \emph{Lie ideal} in a \ca{} $A$ is a linear subspace $L \subseteq A$ such that for every $a \in A$ and $x \in L$ the commutator $[a,x] := ax-xa$ belongs to $L$.
The study of Lie ideals in \ca{s} has a long history, and we refer to Section~2 in \cite{Mar10ProjCommutLieIdls} for an overview of available results and directions of active research.

Immediate examples of Lie ideals in a \ca{}~$A$ are all subspaces contained in the center $Z(A)$ and all subspaces containing the commutator subspace $[A,A]$.
If~$A$ is unital and simple, then this does in fact describes all Lie ideals -- a consequence of Herstein's description of Lie ideals in simple rings \cite{Her55LieJordanSimpleRing}.
Since the center of a unital, simple \ca{} $A$ contains only scalar multiples of the identity, a subspace~$L$ is a Lie ideal in~$A$ if and only if $L=\{0\}$, $L=\CC 1$, or $[A,A] \subseteq L$.

Pop \cite{Pop02FiniteSumsCommutators} showed that a unital \ca{} satisfies $A=[A,A]$ if and only if $A$ has no tracial states.
Consequently, a simple, unital \ca{} $A$ without tracial states has exactly three Lie ideals: $\{0\}$, $\CC 1$ and $A$.
Cuntz and Pedersen \cite{CunPed79EquivTraces} gave a description of the closure $\overline{[A,A]}$ as the intersection of the kernels of all tracial states on $A$.
In particular, a simple, unital \ca{} $A$ with a unique tracial state has exactly four closed Lie ideals: $\{0\}$, $\CC 1$, $\overline{[A,A]}$ and $A$.
This was first observed by Marcoux and Murphy \cite{MarMur98UniInvSpaceCAlg}, generalizing the case of UHF-algebras, which was first proved by Marcoux \cite{Mar95CldLieIdlsCAlgs}.

Ng and Robert \cite{NgRob16CommutatorsPureCa} showed that $[A,A]$ is closed in every pure, exact \ca.
It follows that a simple, unital, exact, pure \ca{} $A$ with a unique tracial state has exactly four Lie ideals: $\{0\}$, $\CC 1$, $[A,A]$ and $A$.
This applies for example to UHF-algebras, to irrational rotation algebras, and to the Jiang-Su algebra (where it was first shown by Ng \cite{Ng12CommutatorsJiangSu}).
It also applies to many, and possibly all, simple, reduced group \ca{s}.
Indeed, if $G$ is a countable, exact, acylindrically hyperbolic group with trivial finite radical and with the rapid decay property, then $C^*_\mathrm{red}(G)$ has strict comparison \cite{AmrGaoElaPat24arX:StrCompRedGpCAlgs}, and therefore is pure \cite{AntPerThiVil24arX:PureCAlgs}, and has a unique tracial state \cite{BreKalKenOza17CSimpleUniqueTr}.

\medskip

%==========================================================================================
Turning towards Lie ideals in non-simple \ca{s}, it is clear that the structure of the lattice of two-sided ideals will play a role.
A systematic study of the interplay between Lie and two-sided ideals was initiated by Bre\v{s}ar, Kissin, and Shulman \cite{BreKisShu08LieIdeals}
We first introduce the necessary notation.
Following the usual convention, given subspaces $M,N \subseteq A$, we use $[M,N]$ to denote the subspace of $A$ generated by the set of commutators $[x,y]$ for $x \in M$ and $y \in N$.
Further, given a subspace $K \subseteq A$, we set
\[
T(K) := \big\{ a \in A : [A,a] \subseteq K \big\}.
\]
This `derived subspace' was first considered by Herstein \cite{Her55LieJordanSimpleRing}, and it is denoted by $N(K)$ in \cite{BreKisShu08LieIdeals}.

Given a two-sided ideal $I \subseteq A$, Bre\v{s}ar, Kissin, and Shulman observed that a subspace $L \subseteq A$ is a Lie ideal whenever 
\[ %begin{equation}
%\label{eq:Embracing}
[A,I] \subseteq L \subseteq T([A,I]).
\] %end{equation}
We say that a Lie ideal $L$ is \emph{embraced by} $I$ if it satisfies these inclusions. % \eqref{eq:Embracing}.
The Lie ideals that are embraced by some two-sided ideal are thought of as `unsurprising'.
A major question is if there exist any Lie ideals that do not arise this way:

%==========================================================================================
\begin{pbmIntro}[{Bre\v{s}ar, Kissin, and Shulman \cite[p.~74]{BreKisShu08LieIdeals}}]
\label{pbm:Embraced}
Given a \ca{} $A$, is every Lie ideal of $A$ embraced by some two-sided ideal of $A$?
\end{pbmIntro}

%==========================================================================================
To study this problem, two further notions were introduced in \cite{BreKisShu08LieIdeals}:
Given a Lie ideal $L \subseteq A$ and a two-sided ideal $I \subseteq A$, we say that $L$ is \emph{commutator equivalent to} $I$ if
\[
[A,I] = [A,L].
\]
Further, we say that $L$ is \emph{related to} $I$ if 
\[
[A,I] \subseteq L \subseteq T(I).
\]

It is easy to see that these constitute a sufficient and a necessary conditions for embracement.
Indeed, if $L$ is commutator equivalent to~$I$, then $L$ is embraced by~$I$.
Further, if $L$ is embraced by $I$, then $L$ is related to $I$.

It was shown in \cite[Theorem~5.27]{BreKisShu08LieIdeals} that every Lie ideal $L$ in a \ca{}~$A$ is topologically commutator equivalent to a two-sided ideal $I$ in the sense that $[A,L]$ and $[A,I]$ have the same closure.
This leads to a description of all \emph{closed} Lie ideals in terms of \emph{closed} two-sided ideals.
However, this does not show that all closed Lie ideals in a \ca{} are commutator equivalent to (let alone embraced by or related to) a two-sided ideal.
For general non-closed Lie ideals even less is known, and \autoref{pbm:Embraced} as well as the following stronger problem remain open:

%==========================================================================================
\begin{pbmIntro}[{Robert \cite[Question~1.12]{Rob16LieIdeals}}]
\label{pbm:Equivalent}
Given a \ca{} $A$, is every Lie ideal of $A$ commutator equivalent to some two-sided ideal of $A$?
\end{pbmIntro}

%==========================================================================================
The results mentioned at the beginning of the introduction show that every Lie ideal in a unital, simple \ca{} $A$ is commutator equivalent to a two-sided ideal, which gives positive solutions to Problems~\ref{pbm:Embraced} and~\ref{pbm:Equivalent} in this setting.
Indeed, the only two-sided ideals are~$\{0\}$ and~$A$, and a Lie ideal is commutator equivalent to~$\{0\}$ if and only if it is contained in~$Z(A)$, while a Lie ideal it is commutator equivalent to~$A$ if and only if it contains~$[A,A]$. 
(This uses that $[A,A]=[A,[A,A]]$, which follows for example from \cite[Corollary~3.4]{GarThi24PrimeIdealsCAlg}.)

In \cite{FonMieSou82LieJordanIdlsBH}, Fong, Miers and Sourour showed that every Lie ideal in the von Neumann algebra of bounded, linear operators on a separable Hilbert space is embraced by a two-sided ideal.
This was substantially extended by Bre\v{s}ar, Kissin, and Shulman, who showed in \cite[Theorem~5.19]{BreKisShu08LieIdeals} that every Lie ideal in a von Neumann algebra is commutator equivalent to a two-sided ideal, thereby providing positive solutions to Problems~\ref{pbm:Embraced} and~\ref{pbm:Equivalent} for von Neumann algebras.

Our main result is a positive solution to \autoref{pbm:Equivalent} for unital, properly infinite \ca{s}. 

%==========================================================================================
\begin{thmIntro}[{\ref{prp:PropInf}}]
\label{thm:PropInf}
Let $L$ be a Lie ideal in a unital, properly infinite \ca{}~$A$.
Then $L$ is commutator equivalent to the two-sided ideal $I:=A[A,L]A$, and hence also embraced by~$I$, and related to~$I$.
Further, $I$ is the only two-sided ideal of $A$ to which $L$ is related.
\end{thmIntro}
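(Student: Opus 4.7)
The plan is to first establish the commutator equivalence $[A, L] = [A, I]$, after which embracement, relatedness, and the uniqueness of $I$ follow formally.

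For the easy inclusion $[A, L] \subseteq [A, I]$, I would combine the defining containment $[A, L] \subseteq I$ with an ``ideal-Pop'' identity in the properly infinite setting: every two-sided ideal $J \subseteq A$ of a unital, properly infinite \ca{} satisfies $J = [A, J]$. I expect this to be a natural preliminary lemma, provable via the matrix decomposition $A \cong M_2(A)$ coming from proper infiniteness together with the standard off-diagonal matrix-unit commutator identities. Once it is available, $[A, L] \subseteq I = [A, I]$.

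The reverse inclusion $[A, I] \subseteq [A, L]$ is the substantive step, and the approach I would pursue is to derive it from the strictly stronger structural statement that $[A, L]$ is itself a two-sided ideal of $A$. Granting this, $I = A[A, L]A = [A, L]$, and then $[A, I] = [A, [A, L]] \subseteq [A, L]$ follows from the Lie ideal property of $[A, L]$. The task is thus to show $A \cdot [A, L] \cdot A \subseteq [A, L]$. Elementary identities such as $b[c, l] = [bc, l] - [b, l]c$ only relocate multiplication from one side to the other and cannot by themselves close the circle. My plan is to exploit the two-isometry matrix picture $A \cong M_2(A)$: products in $M_2(A)$ factor through commutators of matrix units, and when one of the factors lies in $[A, L]$ the Jacobi identity combined with $[A, L] \subseteq L$ should let the resulting terms be reorganized into $[A, L]$. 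This is where proper infiniteness is genuinely needed; indeed the analogous statement already fails in $M_n(\CC)$, where for $L = [M_n(\CC), M_n(\CC)]$ the subspace $[M_n(\CC), L]$ equals the traceless matrices, which do not form a two-sided ideal. I expect this step to be the main technical difficulty.

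Once commutator equivalence is established, the remaining assertions are formal. Embracement follows from $[A, I] = [A, L] \subseteq L$ (Lie ideal property) and $L \subseteq T([A, L]) = T([A, I])$ (tautologically); relatedness is then immediate since $[A, L] \subseteq I$. For uniqueness, suppose $L$ is related to a two-sided ideal $J$, i.e., $[A, J] \subseteq L \subseteq T(J)$. The containment $L \subseteq T(J)$ gives $[A, L] \subseteq J$ and hence $I = A[A, L]A \subseteq J$, since $J$ is two-sided. Conversely, $[A, J] \subseteq L$ yields $[A, [A, J]] \subseteq [A, L] = I$; applying the ideal-Pop identity $[A, J] = J$ twice collapses the left side to $J$, so $J \subseteq I$. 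Therefore $J = I$, completing the argument.
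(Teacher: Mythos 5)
There is a genuine gap, and it occurs at the very foundation of your argument: the ``ideal-Pop identity'' $J=[A,J]$ for two-sided ideals $J$ in a unital, properly infinite \ca{} is \emph{false}. A counterexample is $A=\Bdd(H)$ for $H$ separable and infinite-dimensional (which is properly infinite) with $J$ the ideal of finite-rank operators: for $a\in \Bdd(H)$ and $x$ finite rank, both $ax$ and $xa$ are trace class and $\tr(ax)=\tr(xa)$, so every element of $[\Bdd(H),J]$ has trace zero, whence $[\Bdd(H),J]\subsetneq J$. Pop's theorem gives $A=[A,A]$ under ``no tracial states,'' but this does not pass to ideals; the obstruction is exactly the trace on $J$, which has no reason to vanish even when $A$ itself has no tracial states. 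The same example defeats your strategy for the reverse inclusion as well: with $L=J$, the space $[A,L]$ is the trace-zero finite-rank operators, which is not a two-sided ideal (multiply a trace-zero rank-one operator by a suitable rank-one partial isometry to produce a rank-one projection). You in fact noticed this phenomenon for $M_n(\CC)$ and concluded that proper infiniteness must repair it; $\Bdd(H)$ shows it does not. Since step (a), step (b), and the $J\subseteq I$ half of your uniqueness argument each invoke $J=[A,J]$ (directly or via ``$[A,L]$ is a two-sided ideal''), the proof does not close.

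The paper's route is structurally different and worth contrasting. The inclusion $[A,I]\subseteq[A,L]$ is obtained not by showing $[A,L]$ is an ideal, but by proving $[A,I]=[A,[A,L]]$ via a $2\times 2$ operator-matrix decomposition of $A$ coming from a unital \stHom{} $M_2(\CC)\oplus M_3(\CC)\to A$ (not the $M_2(A)$ picture you suggest, which need not exist unless $1=p_1+p_2$ with $p_1\sim p_2$); this is Lemmas~\ref{prp:InclusionsAAL}--\ref{prp:AI-in-AAL} and Theorem~\ref{prp:L-Related-IdlAL}, and it is where the real work is. Proper infiniteness enters only once, and in the \emph{other} direction: from Pop's theorem $A=[A,A]$ one gets $[A,L]=[[A,A],L]\subseteq[A,[A,L]]$ via the Jacobi identity (\autoref{prp:ZA-AA}), which closes the loop $[A,L]=[A,[A,L]]=[A,I]$. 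Uniqueness of $I$ likewise comes out of the matrix decomposition (showing $gJh\subseteq L$ forces $J\subseteq A g L h A = I$), not from any collapse of $[A,[A,J]]$ to $J$. In short, your decomposition intuition is pointed in a useful direction, but the key lemma you posit is false, and the correct argument replaces it with the $M_2\oplus M_3$ matrix-unit analysis on one side and Pop-plus-Jacobi on the other.
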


%==========================================================================================
The proof of \autoref{thm:PropInf} is inspired by a result of Marcoux \cite{Mar95CldLieIdlsCAlgs}, who showed that if $B$ is a unital \ca{} and $n \geq 2$, then every Lie ideal in the matrix algebra $A=M_n(B)$ is related to a two-sided ideal in~$A$.
Our main technical advancement is a generalization of this result to unital \ca{s} $A$ that admit a unital $*$-homomorphism $M_2(\CC) \oplus M_3(\CC) \to A$.

The next result summarizes Theorems~\ref{prp:L-Related-IdlAL}, \ref{prp:CharLieEmbr},  \ref{prp:AAI-AI}, and~\ref{prp:CorrespondenceLie}.¨

%==========================================================================================
\begin{thmIntro}
\label{prp:MapFromM2M3}
Let $A$ be a unital \ca{} that admits a unital $*$-homomorphism $M_2(\CC) \oplus M_3(\CC) \to A$.
The following statements hold:
\begin{enumerate}
\item
Given a Lie ideal $L \subseteq A$, for the two-sided ideal $I=A[A,L]A$, we have
\[
[A,I] = [A,[A,L]] \subseteq [A,L] \subseteq I, \andSep
[A,I] \subseteq L \subseteq T(I).
\]
Further, $I$ is the only two-sided ideal of $A$ to which $L$ is related.
\item
A Lie ideal $L \subseteq A$ is embraced by some two-sided ideal (which then necessarily is $A[A,L]A$) if and only if $L$ is commutator equivalent to some two-sided ideal, if and only if $[A,L]=[A,[A,L]]$.
\item
Given a two-sided ideal $I \subseteq A$, we have
\[
I = A[A,I]A, \andSep [A,I]=[A,[A,I]].
\]
\item
There is a natural bijection between the set $\calI$ of two-sided ideals in $A$ and the set 
\[
\calL = \big\{ L \subseteq A \text{ Lie ideal} : L = [A,L] \big\}.
\]
given by the maps $I \mapsto [A,I]$ and $L \mapsto A[A,L]A$.
\end{enumerate}
\end{thmIntro}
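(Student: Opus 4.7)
The plan is to deduce the four parts of the theorem from two central matrix-unit identities and then verify each part as a formal consequence. I fix a unital $*$-homomorphism $\varphi \colon M_2(\CC) \oplus M_3(\CC) \to A$ and write $p_{ij}$, $q_{kl}$ for the images of the standard matrix units of the two summands. The five diagonal projections $p_{11}, p_{22}, q_{11}, q_{22}, q_{33}$ are pairwise orthogonal and sum to $1_A$, and the $p$- and $q$-blocks annihilate each other.

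The technical heart of the argument consists of two identities, both proved by decomposing arbitrary elements using the matrix units: (i) for every two-sided ideal $I \subseteq A$, one has $I = A[A, I]A$ and $[A, I] = [A, [A, I]]$; (ii) for every linear subspace $M \subseteq A$ with $[A, M] \subseteq M$, one has $[A, A[A, M]A] \subseteq [A, [A, M]]$. For (i), writing $i \in I$ as $\sum_{e, f} e i f$ with $e, f$ ranging over matrix units and using identities such as $p_{21} i q_{11} = -[q_{11}, p_{21} i]$ (valid because $q_{11} p_{21} = 0$), $p_{11} = p_{12} p_{21}$, and their analogues, one exhibits each summand as an element of $A[A, I]A$, exploiting that $I$ is closed under multiplication on either side by $A$. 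For (ii), a similar decomposition reduces to showing that $[a, e \cdot [b, m] \cdot f] \in [A, [A, M]]$ for every $a, b \in A$, $m \in M$ and matrix units $e, f$; this is handled by Leibniz-style expansions combined with the square-zero property of off-diagonal matrix units and the hypothesis $[A, M] \subseteq M$. The essential role of having both $M_2$ and $M_3$ is the familiar trace obstruction: $p_{11}$ is not a commutator in $M_2(\CC)$ (since $\mathrm{tr}(p_{11}) = 1 \neq 0$), so contributions on certain diagonal blocks require help from the $M_3$-summand via cross-block manipulations unavailable from either block alone.

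Given (i) and (ii), the four numbered parts follow cleanly. Part (3) is exactly (i). For part (1), apply (ii) to $M := [A, L]$; since $L$ is a Lie ideal, $[A, [A, L]] \subseteq [A, L]$, so (ii) yields $[A, A[A, L]A] \subseteq [A, [A, L]]$, with the reverse inclusion automatic from $[A, L] \subseteq A[A, L]A$. All stated inclusions in part (1) are then immediate. Uniqueness: if $L$ is related to a two-sided ideal $J$, then $[A, J] \subseteq L$ combined with $[A, J] = [A, [A, J]]$ from (i) gives $[A, J] \subseteq [A, L]$, hence $J = A[A, J]A \subseteq A[A, L]A = I$, and $I \subseteq J$ follows from $[A, L] \subseteq J$ (which is $L \subseteq T(J)$). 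Part (2) is formal: commutator equivalence of $L$ with an ideal $J$ yields $[A, L] = [A, J] = [A, [A, J]] = [A, [A, L]]$, and conversely $[A, L] = [A, [A, L]]$ combined with part (1) forces $[A, I] = [A, L]$; the identification of the embracing ideal with $I$ comes from uniqueness in (1). Part (4) reduces to checking that $I \mapsto [A, I]$ and $L \mapsto A[A, L]A$ are mutually inverse, which follows from $I = A[A, I]A$ (part (3)) and, for $L \in \calL$, $L = [A, L] = [A, A[A, L]A]$ (part (1) together with $L = [A, L]$).

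The principal obstacle is establishing (i) and (ii) through the combinatorics of matrix-unit identities: one must treat several cases (diagonal vs.~off-diagonal cells, intra- vs.~inter-block positions) and, crucially, exploit the rank mismatch between $M_2$ and $M_3$ to defeat the trace obstruction on diagonal pieces. This is the essential technical advance over Marcoux's argument for $M_n(B)$, which relied on a single matrix block of size at least two.
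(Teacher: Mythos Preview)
Your formal deductions of parts (1)--(4) from the two claimed identities (i) and (ii) are essentially correct and, in places (notably the uniqueness argument via $J = A[A,J]A$), cleaner than the paper's route. There is one slip: to obtain $[A,A[A,L]A] \subseteq [A,[A,L]]$ you must apply (ii) with $M := L$, not $M := [A,L]$; with $M = [A,L]$ the conclusion of (ii) is the unrelated inclusion $[A,A[A,[A,L]]A] \subseteq [A,[A,[A,L]]]$.

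The genuine gap is that you have not proved (ii), and (ii) is the entire content of the theorem. Your claimed reduction ``reduces to showing that $[a, e\cdot [b,m]\cdot f] \in [A,[A,M]]$ for matrix units $e,f$'' does not hold: elements of $A[A,M]A$ are sums of $x[b,m]y$ with \emph{arbitrary} $x,y \in A$, and inserting $1 = \sum_e e$ on either side does not produce terms of the form $e[b,m]f$ but rather $e\,x[b,m]y\,f$, which is no simpler. Leibniz expansions such as $[a,x[b,m]] = [a,x][b,m] + x[a,[b,m]]$ land you in $A\cdot[A,[A,M]]$, not in $[A,[A,M]]$, and the square-zero property of off-diagonal matrix units alone does not close this gap. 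The paper handles exactly this difficulty by choosing a specific \emph{mixed} projection $g = e_{11}+f_{11}$ (with $h = 1-g$) rather than the five individual diagonal cells, and then proving the crucial identity $A[A,L]A = AgLhA$ (via a chain of lemmas culminating in their Proposition~2.5). This identity replaces the intractable generators $x[b,m]y$ by generators $agxhb$ with $x \in L$, for which a $2\times 2$ block analysis (Lemma~2.6) succeeds---and it is precisely here that the interaction of the $M_2$ and $M_3$ summands enters, via the partial isometries $v_1 = e_{12}+f_{12}$ and $v_2 = e_{12}+f_{13}$ satisfying $v_1w_1 = v_2w_2 = g$ and $w_1v_1 + w_2v_2 = h + e_{22}$. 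Your five-cell decomposition has no analogue of this step, and the paper explicitly remarks that approaches based on more na\"{\i}ve choices of projection do not obviously go through.
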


%==========================================================================================
We note that \autoref{prp:MapFromM2M3} applies to all unital, properly infinite \ca{s}, which then yields \autoref{thm:PropInf}.
It also applies to unital, real rank zero \ca{s} without characters, and in particular to all von Neumann algebras with zero commutative summand, which we use to recover the solution to \autoref{pbm:Equivalent} for von Neumann algebras from \cite{BreKisShu08LieIdeals}.
Our approach also shows that a Lie ideal in a von Neumann algebra without commutative summand is related to a unique two-sided ideal, which solves \cite[Problem~5.21]{BreKisShu08LieIdeals}.

%==========================================================================================
\begin{thmIntro}[{\ref{prp:VNA}}]
\label{thm:VNA}
Let $L$ be a Lie ideal in a von Neumann algebra~$M$.
Then $L$ is commutator equivalent to the two-sided ideal $I:=M[M,L]M$, and hence also embraced by~$I$, and related to~$I$.

If $M$ has zero commutative summand, then $I$ is the only two-sided ideal of $M$ to which $L$ is related.
\end{thmIntro}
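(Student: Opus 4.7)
My plan is to reduce the statement to Theorem~\ref{prp:MapFromM2M3} by using the central decomposition of $M$. Let $p \in Z(M)$ be the central projection for which $M_c := pM$ is the commutative summand of $M$ and $M_{nc} := (1-p)M$ has zero commutative summand; as explicitly noted in the discussion of Theorem~\ref{prp:MapFromM2M3}, the hypotheses of that theorem are satisfied by $M_{nc}$.

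Given a Lie ideal $L \subseteq M$, set $L_{nc} := (1-p)L$. Since $(1-p)$ is a central projection, multiplication by $(1-p)$ is a surjective $*$-ho\-mo\-mor\-phism from $M$ onto $M_{nc}$, so $L_{nc}$ is a Lie ideal in $M_{nc}$. The essential observation is that $M_c$ contributes nothing to $[M,L]$: for $a \in M$ and $x \in L$, centrality of $p$ together with commutativity of $pM$ gives $p[a,x] = [pa, px] = 0$, so $[M,L] = (1-p)[M,L]$; a parallel computation yields $(1-p)[a,x] = [(1-p)a, (1-p)x]$, and therefore
\[
[M, L] = [M_{nc}, L_{nc}].
\]
Taking $M$-bimodule spans on both sides then gives $I = M[M,L]M = M_{nc}[M_{nc}, L_{nc}]M_{nc}$, which is precisely the two-sided ideal assigned to $L_{nc}$ in $M_{nc}$ by Theorem~\ref{prp:MapFromM2M3}(1).

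Applying Theorem~\ref{prp:MapFromM2M3}(1) inside $M_{nc}$ yields $[M_{nc}, L_{nc}] = [M_{nc}, I]$. Since $I \subseteq M_{nc}$ and $pM$ commutes with $M_{nc}$, also $[M, I] = [M_{nc}, I]$. Chaining the identities gives $[M, L] = [M, I]$, proving commutator equivalence, from which embracement and relatedness follow as recorded in the introduction. For the uniqueness claim, the hypothesis that $M$ has zero commutative summand amounts to $p = 0$, so $M = M_{nc}$ and uniqueness is immediate from Theorem~\ref{prp:MapFromM2M3}(1).

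The substantive work is carried out by Theorem~\ref{prp:MapFromM2M3}, so the main task here is careful bookkeeping with the central projection; I do not anticipate a real obstacle. It is worth observing that uniqueness cannot hold when $M$ has a nonzero commutative summand, since every subspace $L_c \subseteq M_c$ satisfies $[M_c, I_c] = 0 \subseteq L_c \subseteq M_c = T(I_c)$ for every two-sided ideal $I_c \subseteq M_c$; thus $L_c$ is related to every ideal of $M_c$, and only the non-commutative summand can enforce uniqueness.
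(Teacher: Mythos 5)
Your reduction to $M_{nc}$ via the central projection $1-p$ is correct and mirrors the paper's reduction to the summand without commutative part (the paper writes $L_1 := M_1 \cap L$ where you write $L_{nc} := (1-p)L$; these lead to the same $[M,L]$ and $I$). Your uniqueness argument and the counterexample for nonzero commutative summand also match what the paper says. However, there is a genuine gap at the central step.

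You assert that ``applying Theorem~\ref{prp:MapFromM2M3}(1) inside $M_{nc}$ yields $[M_{nc},L_{nc}] = [M_{nc},I]$.'' This does not follow. Part~(1) of Theorem~\ref{prp:MapFromM2M3} gives only the chain
\[
[M_{nc},I] = [M_{nc},[M_{nc},L_{nc}]] \subseteq [M_{nc},L_{nc}] \subseteq I,
\]
so it produces relatedness but \emph{not} commutator equivalence. In fact, part~(2) of the same theorem makes explicit that the inclusion $[M_{nc},[M_{nc},L_{nc}]] \subseteq [M_{nc},L_{nc}]$ is an equality precisely when $L_{nc}$ is commutator equivalent to (equivalently, embraced by) a two-sided ideal; the $M_2(\CC)\oplus M_3(\CC)$ hypothesis alone does not force this. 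The missing ingredient, which is what makes von Neumann algebras special, is the identity $M = Z(M)+[M,M]$, quoted in the paper from the proof of~\cite[Theorem~5.19]{BreKisShu08LieIdeals} and recorded as Proposition~\ref{prp:Sufficient-ZA-AA}(3). Feeding this into Lemma~\ref{prp:ZA-AA} gives $[M,L]=[M,[M,L]]$, and only then does Theorem~\ref{prp:CharLieEmbr} (equivalently Theorem~\ref{prp:MapFromM2M3}(2)) upgrade relatedness to commutator equivalence. You need to add this step; without it you have only proved that $L$ is related to $I$, not that it is commutator equivalent to $I$.
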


%==========================================================================================
\subsection*{Conventions}

%==========================================================================================
Given a \ca{}, we use $A_+$ to denote its collection of positive elements, and we use $\widetilde{A}$ to denote the minimal unitization.
The absolute value of an element $x \in A$ is defined as $|x|:=(x^*x)^{\frac{1}{2}}$.

%==========================================================================================
\subsection*{Acknowledgements}

%==========================================================================================
The author thanks Leonel Robert for valuable comments regarding the Dixmier property for \ca{s}, and Matej Bre\v{s}ar for helpful feedback on an earlier version of this paper.
The author is also grateful to the anonymous referees for their insightful suggestions, in particular for pointing out a consequence of the results that is now included as \autoref{prp:ClosedLieIdl}.

%==========================================================================================
\section{Lie ideals in C*-algebras with weakly divisible unit}
\label{sec:LieDivUnit}

%==========================================================================================
In this section, we prove that every Lie ideal $L$ in a \ca{} $A$ that admits a unital \stHom{} $M_2(\CC) \oplus M_3(\CC) \to A$ is related to a unique two-sided ideal~$I$ in~$A$;
see \autoref{prp:L-Related-IdlAL}.
We further show that $L$ is commutator equivalent to $I$ if and only if $[A,L]=[A,[A,L]]$;
see \autoref{prp:CharLieEmbr}.

\medskip

Following \cite[Deﬁnition~5.1]{PerRor04AFembeddings}, a non-zero projection $p$ in a \ca{} is said to be \emph{weakly divisible of degree $n$} if there exists a unital $*$-homomorphism
\[
M_{n_1}(\CC) \oplus M_{n_2}(\CC) \oplus \ldots \oplus M_{n_r}(\CC) \to pAp
\]
for some natural numbers $r \geq 1$ and $n_1,\ldots,n_r \geq n$.
The $*$-homomorphism may additionally be assumed to be injective.

As noted in \cite[p.164]{PerRor04AFembeddings}, every matrix algebra $M_n(\CC)$ for $n \geq 2$ admits a unital (not necessarily injective) $*$-homomorphism $M_2(\CC) \oplus M_3(\CC) \to M_n(\CC)$, and it follows that a projection $p$ weakly divisible of degree $2$ if and only if there exists a unital (not necessarily injective) $*$-homomorphism $M_2(\CC) \oplus M_3(\CC) \to pAp$.
We deduce that the unit in a \ca{} $A$ is weakly divisible of degree $2$ if and only if there exists an injective $*$-homomorphism
\[
M_{2}(\CC) \to A, \quad
M_{3}(\CC) \to A, \quad\text{ or }\quad
M_{2}(\CC) \oplus M_3(\CC) \to A.
\]
In the first two cases, we get $A \cong M_2(B)$ and $A \cong M_3(C)$ for suitable unital \ca{s} $B$ and $C$, and for such matrix algebras it was shown by Marcoux \cite[Theorem~2.6]{Mar95CldLieIdlsCAlgs} that every Lie ideal is related to a two-sided ideal (which is also unique, as shown in \cite[Corollary~4.18]{BreKisShu08LieIdeals}).

Our main contribution to \autoref{prp:L-Related-IdlAL} is therefore in the case that $A$ admits a unital (injective) $*$-homomorphism $M_{2}(\CC) \oplus M_3(\CC) \to A$ but no unital $*$-homomorphism $M_n(\CC) \to A$ for $n=2$ or $n=3$ (or actually any $n \geq 2$).
The Cuntz algebra $\calO_\infty$ is an important such example.
(Using $K$-theory, we see that there exists no unital $*$-homomorphism $M_n(\CC) \to \calO_\infty$ for any $n \geq 2$.
On the other hand, $\calO_\infty$ admits a unital $*$-homomorphism from $M_{2}(\CC) \oplus M_3(\CC)$ as explained in the proof of \autoref{prp:PropInf}.)

\medskip

Our approach is inspired by the methods developed by Marcoux \cite{Mar95CldLieIdlsCAlgs} to describe Lie ideals in matrix algebras, as well as the techniques of Bre\v{s}ar, Kissin, and Shulman \cite[Section~3]{BreKisShu08LieIdeals} to study Lie ideals in algebras that decompose with respect to an idempotent.
However, the result does not simply follow by combining these methods, for example by applying \cite[Theorem~3.4]{BreKisShu08LieIdeals} for a suitable idempotent in the \ca.
Specifically, given a unital $*$-homomorphism $M_2(\CC) \oplus M_3(\CC) \to A$, we do not decompose $A$ with respect to the projection $p$ given by the image of the unit of $M_2(\CC)$, in which case $pAp+(1-p)A(1-p)$ is a direct sum of matrix algebras (whence its Lie ideals are related to two-sided ideals), but it is unclear if (3.17) in \cite[Theorem~3.4]{BreKisShu08LieIdeals} holds (or if $p$ is locally cyclic in the sense of \cite[Definition~3.5]{BreKisShu08LieIdeals}).
Instead, we use a `mixed' decomposition with respect to a projection $g$ that is given by the image of the sum of a rank-one projection in $M_2(\CC)$ with a rank-one projection in $M_3(\CC)$, as explained in \autoref{pgr:Setup}.
In this case, one can show that $g$ is locally cyclic, which is the idea underlying the proof that $A_{12}LA_{12} \subseteq [A,[A,L]]$ in \autoref{prp:InclusionsAAL}.
On the other hand, the algebra $gAg+(1-g)A(1-g)$ is not a sum of matrix algebras and it is therefore not immediate that its Lie ideals are related to two-sided ideals.

%==========================================================================================
\begin{pgr}
\label{pgr:Setup}
Let $A$ be a unital \ca{} that admits a unital \stHom{}
\[
\varphi \colon M_2(\CC) \oplus M_3(\CC) \to A.
\]
Let $(e_{ij})_{i,j=1,2}$ denote the image of matrix units in $M_2(\CC)$ under $\varphi$, and similarly let $(f_{ij})_{i,j=1,2,3}$ denote the image of matrix units in $M_3(\CC)$ under $\varphi$.
We do not assume that $\varphi$ is injective, and therefore the $e_{ij}$ or $f_{ij}$ may be zero.

Set
\[
g := e_{11} + f_{11}, \andSep
h := e_{22} + f_{22} + f_{33}.
\]
Then $1=g+h$, and we view elements in $A$ as operator matrices with respect to the decomposition induces by $g$ and $h$.
More formally, we set
\[
A_{11} := gAg, \quad
A_{12} := gAh, \quad
A_{21} := hAg, \andSep
A_{22} := hAh,
\]
and every element $a$ in $A$ can be written uniquely as a sum $a=a_{11}+a_{12}+a_{21}+a_{22}$ with $a_{ij} \in A_{ij}$ for $i,j=1,2$.
\end{pgr}

%==========================================================================================
\begin{lma}
\label{prp:InclusionsAAL}
Retain the notation from \autoref{pgr:Setup}, and let $L \subseteq A$ be a Lie ideal.
Then the sets 
\[
A_{11}LA_{22}, \quad
A_{22}LA_{11}, \quad
A_{12}LA_{12}, \andSep
A_{21}LA_{21}
%gAgLhAh \subseteq [A,L], \quad
%rArLqAq \subseteq L, \quad
%qArLqAr \subseteq L, \andSep
%rAqLrAq \subseteq L.
\]
are all contained in $[A,[A,L]]$, and thus also in $[A,L]$ and in $L$.
\end{lma}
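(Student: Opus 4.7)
The plan is to exploit the Peirce-like relations $A_{ij}A_{kl} = \delta_{jk}A_{il}$ (which follow from $g+h=1$ and $gh=0$), the square-zero property $a^2=0$ for $a \in A_{12}$ (and for $a \in A_{21}$), and the orthogonality $A_{11}A_{22} = A_{22}A_{11} = 0$. Throughout, I will use that $[A,L]$ and $[A,[A,L]]$ are themselves Lie ideals of $A$ by the Jacobi identity, hence stable under commutators with arbitrary elements of $A$.

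Two basic identities drive the argument. First, for $a \in A_{12}$ and $x \in L$, the square-zero relation yields $[a,[a,x]] = a^2 x - 2 axa + x a^2 = -2 axa$, so $axa \in [A,[A,L]]$. Polarizing in $a$ then gives $axb+bxa \in [A,[A,L]]$ for all $a,b \in A_{12}, x \in L$, and the same identity holds for $a \in A_{11}, b \in A_{22}$ since $ab=ba=0$ forces $[b,[a,x]] = -(axb+bxa)$. Second, a direct computation shows that for every $y \in A$ with block decomposition $y = y_{11}+y_{12}+y_{21}+y_{22}$,
\[
[g,y] = y_{12} - y_{21} \andSep [g,[g,y]] = y_{12}+y_{21},
\]
so the $A_{12}$- and $A_{21}$-components of any element of $[A,[A,L]]$ again lie in $[A,[A,L]]$.

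The containments $A_{11}LA_{22}, A_{22}LA_{11} \subseteq [A,[A,L]]$ now follow at once: for $a \in A_{11}, b \in A_{22}, x \in L$ the sum $axb+bxa$ splits into its $A_{12}$-part $axb$ and its $A_{21}$-part $bxa$, each of which lies in $[A,[A,L]]$ by the block-separation identity. The main obstacle is $A_{12}LA_{12}$ (and symmetrically $A_{21}LA_{21}$), because now both $axb$ and $bxa$ lie in the same block $A_{12}$ and the $g$-commutator does not separate them. This is precisely where the hypothesis that $A$ contains the image of $M_2(\CC) \oplus M_3(\CC)$---rather than merely of $M_2(\CC)$---becomes essential. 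My plan is to refine the decomposition along the five orthogonal projections $e_{11}, e_{22}, f_{11}, f_{22}, f_{33}$ summing to $1$, writing $a$ and $b$ as sums of pieces in the finer corners $p_i A p_j$. For a pair of pieces sitting in different corners, a commutator of $axb+bxa$ with an appropriately chosen projection $p_m$ from this list isolates $axb$ from $bxa$. For the remaining same-corner case, I will write $axb$ as a sandwich $r z s$ with $r, s$ chosen among the local matrix units $e_{12}, f_{12}, f_{13}$ (so that $r^2 = s^2 = 0$ and $rs = sr = 0$ in the mixed sub-cases) and with inner factor $z \in A_{22}LA_{11}$, which is already known to lie in $[A,[A,L]]$ by the earlier step; the identity $[r,[s,z]] = -rzs - szr$, combined with one further block-separation step to peel off the symmetric partner $szr$, then places $rzs$, and hence $axb$, in $[A,[A,L]]$. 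The containment for $A_{21}LA_{21}$ follows by the analogous argument using the transposed matrix units.
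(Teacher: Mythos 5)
Your outline is correct and would produce a valid proof, but it reorganizes the argument in a genuinely different way from the paper. For $A_{11}LA_{22}$ and $A_{22}LA_{11}$, the paper first shows $gLh\subseteq L$ by combining $[g,x]$ with $[g,[g,x]]$ and then obtains the cross-block terms via the single iterated commutator $gagxhbh=[[gag,gxh],hbh]$; you instead polarize the square-zero identity $[b,[a,x]]=-(axb+bxa)$ over $a\in A_{11}$, $b\in A_{22}$ and then peel off the $A_{12}$- and $A_{21}$-parts with $[g,\cdot]$. Both work; yours is arguably slicker for this step. For the hard case $A_{12}LA_{12}$, both proofs reduce to the already-handled $A_{22}LA_{11}$ via the $M_2\oplus M_3$ matrix units and the square-zero commutator identity, but the mechanism differs: the paper fixes the specific elements $v_1=e_{12}+f_{12}$, $w_1=e_{21}+f_{21}$, $v_2=e_{12}+f_{13}$, $w_2=e_{21}+f_{31}$ (which satisfy $v_iw_i=g$, $w_1v_1=e_{22}+f_{22}$, $w_2v_2=e_{22}+f_{33}$), rewrites $b=b'(2e_{22}+f_{22}+f_{33})$, and reassembles $axb$ from $axb'(e_{22}+f_{22})$ and $axb'(e_{22}+f_{33})$ in one stroke, avoiding any fine-corner case distinction. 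You instead decompose $a$ and $b$ fully over the five projections $e_{11},e_{22},f_{11},f_{22},f_{33}$ and treat each of the resulting summands $a_{pq}xb_{p'q'}$ by either the $p_m$-isolation trick (different corners) or a matrix-unit sandwich $r z s$ with $z\in A_{22}LA_{11}$ (same corner). This is more mechanical but unambiguously correct once the cases are checked. One small inefficiency in your sketch: the promised ``further block-separation step to peel off $szr$'' is never actually needed. For $r,s\in\{e_{12},f_{12},f_{13}\}$ one always has $rs=sr=0$ (all three map a $g$-subcorner to an $h$-subcorner), and in the same-corner case a short check of targets shows that either $r=s$ (so $[r,[r,z]]=-2rzr$ gives $rzs$ directly) or else $szr=0$ outright because the right support of $s$ is orthogonal to the left support of $z$. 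So the identity $[r,[s,z]]=-rzs-szr$ already isolates $rzs$ without an extra separation step.
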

\begin{proof}
\emph{We verify that $A_{11}LA_{22} \subseteq [A,[A,L]]$.}
We first show that $gLh \subseteq L$.
To see this, let $x \in L$.
Then
\[
gx - xg
= [g, x] 
\in [A,L], \andSep
gx - 2gxg + xg
= [g,[g, x]] \in [A,[A,L]] 
\subseteq [A,L].
\]
Adding these expressions, we get
\[
2gxh
= 2gx - 2gxg
\in [A,L] \subseteq L,
\]
and using that $hg=0$ we get
\[
gxh
= [g,gxh]
= [\tfrac{1}{2}g,2gxh]
\in [A,L] \subseteq L.
\]
We have shown that $gLh \subseteq L$.

Now, given $x \in L$ and $a,b \in A$, we have
\[
gagxhbh
= [[gag,gxh],hbh]
\in [[A,L],A]
= [A,[A,L]].
\]
We have shown that $A_{11}LA_{22} \subseteq [A,[A,L]]$.

Analogously, one verifies that $A_{22}LA_{11} \subseteq [A,[A,L]]$.

\medskip

\emph{We verify that $A_{12}LA_{12} \subseteq [A,[A,L]]$.}
Given $v \in A_{12}$ and $y \in L$, we have
\begin{equation}
\label{eq:InclusionsAAL:vxv}
vyv 
= [-\tfrac{1}{2}v,[v,y]]
\in [A,[A,L]].
\end{equation}

Set
\[
v_1 := e_{12} + f_{12}, \quad
w_1 := e_{21} + f_{21}, \quad
v_2 := e_{12} + f_{13}, \andSep
w_2 := e_{21} + f_{31}.
\]
Then $v_1,v_2 \in A_{12}$ and $w_1,w_2 \in A_{21}$, and we have
\[
v_1w_1 
= v_2w_2 
= e_{11} + f_{11} 
= g, \quad
w_1v_1 
= e_{22} + f_{22}, \andSep
w_2v_2 
= e_{22} + f_{33}.
\]

Now, given $x \in L$ and $a,b \in A_{12}=gAh$, set
\[
b' = b(\tfrac{1}{2}e_{22}+f_{22}+f_{33}).
\]
Then $b'$ belongs to $gAh$ and we have 
\[
b = b'(2e_{22} + f_{22} + f_{33}).
\]

We have already proved that $hAhLgAg \subseteq L$, which gives
\[
w_1axb'w_1 \in L, \andSep
w_2axb'w_2 \in L.
\]

Applying \eqref{eq:InclusionsAAL:vxv} for $y=w_1axb'w_1$ and $v=v_1$, we get
\[
axb'(e_{22} + f_{22})
= v_1(w_1axb'w_1)v_1 \in [A,[A,L]].
\]
Analogously, we obtain that
\[
axb'(e_{22} + f_{33})
= v_2(w_2axb'w_2)v_2 \in [A,[A,L]],
\]
and consequently
\[
axb = axb'(2e_{22} + f_{22} + f_{33}) \in [A,[A,L]].
\]
We have shown that $A_{12}LA_{12} \subseteq [A,[A,L]]$.

Analogously, one verifies that $A_{21}LA_{21} \subseteq [A,[A,L]]$.
\end{proof}

%==========================================================================================
\begin{lma}
\label{prp:AgLhA-AhLgA}
Retain the notation from \autoref{pgr:Setup}, and let $L \subseteq A$ be a Lie ideal.
Then 
\[
AgLhA = AhLgA.
\]
\end{lma}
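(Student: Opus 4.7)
The plan is to reduce the equality to inclusions of generating sets. Since $AgLhA$ and $AhLgA$ are the two-sided ideals generated by $gLh$ and $hLg$ respectively, it suffices to show $hLg \subseteq AgLhA$ and $gLh \subseteq AhLgA$. I describe the first inclusion in detail; the second follows by an analogous argument with the roles of $v_i$ and $w_i$ interchanged (and left-multiplications replaced by right-multiplications).

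Fix $y \in hLg$, so that $y \in L$ with $hy = y$ and $yg = y$. For $i = 1, 2$, the element $v_i y v_i$ lies in $A_{12} L A_{12}$, hence in $L$ by \autoref{prp:InclusionsAAL}, and it clearly lies in $A_{12}$; thus $v_i y v_i \in gLh \subseteq AgLhA$. Multiplying on both sides by $w_i$ keeps us inside the two-sided ideal $AgLhA$, and using the identities $v_i w_i = g$, $w_1 v_1 = e_{22}+f_{22}$, $w_2 v_2 = e_{22}+f_{33}$, together with $yg = y$, yields
\[
(e_{22}+f_{22}) y = w_1(v_1 y v_1) w_1 \in AgLhA, \quad (e_{22}+f_{33}) y = w_2(v_2 y v_2) w_2 \in AgLhA.
\]

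To isolate the individual summands, I use that the images of $M_2(\CC)$ and $M_3(\CC)$ under $\varphi$ lie in orthogonal subalgebras of $A$, so $e_{ij} f_{kl} = f_{kl} e_{ij} = 0$. Left-multiplying the first displayed element by $e_{22}$ and by $f_{22}$ yields $e_{22}y, f_{22}y \in AgLhA$; left-multiplying the second by $f_{33}$ yields $f_{33}y \in AgLhA$. Summing and using $hy = y$,
\[
y = hy = (e_{22}+f_{22}+f_{33})y = e_{22}y + f_{22}y + f_{33}y \in AgLhA.
\]
This proves $hLg \subseteq AgLhA$, and hence $AhLgA \subseteq AgLhA$.

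No serious obstacle is anticipated: once \autoref{prp:InclusionsAAL} supplies the containments $A_{12} L A_{12}, A_{21} L A_{21} \subseteq L$, the argument is essentially a bookkeeping exercise with the matrix-unit identities from \autoref{pgr:Setup}. The only mildly subtle point is the asymmetry between $g$ and $h$ (namely, $g$ has two matrix-unit summands while $h$ has three), which is precisely why one needs both pairs $(v_1, w_1)$ and $(v_2, w_2)$: together they recover all three summands $e_{22}, f_{22}, f_{33}$ of $h$ via orthogonality of the two embedded matrix blocks.
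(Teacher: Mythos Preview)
Your argument is correct. You show $hLg \subseteq AgLhA$ by conjugating an element $y \in hLg$ with the partial isometries $v_i$ to land in $gLh$, then conjugating back with $w_i$ and using that $AgLhA$ is a two-sided ideal; the matrix-unit bookkeeping is accurate, and the identification $L \cap gAh = gLh$ (which you implicitly use) holds because $gLh \subseteq L$ by \autoref{prp:InclusionsAAL}.

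The paper's proof is shorter and more conceptual: it never touches the specific elements $v_i, w_i$. Instead, it uses only that $g$ and $h$ are full projections (so $A = AgA = AhA$) together with the inclusion $A_{21}LA_{21} \subseteq L$ from \autoref{prp:InclusionsAAL}. Writing
\[
AgLhA \;=\; (AhA)gLh(AgA) \;=\; A\,(hAgLhAg)\,A,
\]
and observing that $hAgLhAg \subseteq A_{21}LA_{21} \cap hAg \subseteq hLg$, one gets $AgLhA \subseteq AhLgA$ in one stroke. Your approach recycles the explicit $v_i,w_i$ machinery already set up in the proof of \autoref{prp:InclusionsAAL}, which is natural but makes the argument longer; the paper's approach shows that once \autoref{prp:InclusionsAAL} is in hand, only the fullness of $g$ and $h$ is needed, with no further computation.
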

\begin{proof}
Using at the first step that $A=AgA=AhA$ since $g$ and $h$ are full, and applying at the third step that $hAgLhAg \subseteq L$ by \autoref{prp:InclusionsAAL}, we get
\[
AgLhA
= AhAgLhAgA
= Ah(hAgLhAg)gA
\subseteq AhLgA.
\]
Analogously, we have
\[
AhLgA
= AgAhLgAhA
= Ag(gAhLgAh)hA
\subseteq AgLhA,
\]
as desired.
\end{proof}

%==========================================================================================
\begin{lma}
\label{prp:AL-in-I}
Retain the notation from \autoref{pgr:Setup}, and let $L \subseteq A$ be a Lie ideal.
Then 
\[
[A,L] \subseteq AgLhA.
\]
\end{lma}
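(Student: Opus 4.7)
The plan is to prove $[A,L] \subseteq AgLhA$ by passing to the quotient $\bar A := A/AgLhA$ and showing that the image $\bar L$ of $L$ lies in the center of $\bar A$. Since $gLh \subseteq AgLhA$ and $hLg \subseteq AhLgA = AgLhA$ by \autoref{prp:AgLhA-AhLgA}, every $\bar x \in \bar L$ satisfies $\bar g \bar x \bar h = 0 = \bar h \bar x \bar g$, so $\bar x = \bar x_{11} + \bar x_{22}$ is \emph{block diagonal} with $\bar x_{ii}$ in the corresponding corner $\bar A_{ii}$ of $\bar A$. For any $\bar a \in \bar A$ the commutator $[\bar a,\bar x]$ also belongs to $\bar L$ and is therefore block diagonal as well; computing $\bar g [\bar a,\bar x] \bar h$ explicitly (this component equals $\bar a_{12} \bar x_{22} - \bar x_{11} \bar a_{12}$) and setting it equal to zero yields the intertwining relation
\[
\bar x_{11} \bar u = \bar u \bar x_{22} \text{ for every } \bar u \in \bar A_{12},
\]
and symmetrically $\bar x_{22} \bar u' = \bar u' \bar x_{11}$ for every $\bar u' \in \bar A_{21}$.

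From the intertwining I deduce corner centrality. For $\bar c \in \bar A_{11}$ and $\bar u \in \bar A_{12}$, applying the intertwining relation to $\bar c \bar u \in \bar A_{12}$ and using associativity gives
\[
\bar x_{11}\bar c \bar u = (\bar c\bar u)\bar x_{22} = \bar c(\bar x_{11}\bar u) = \bar c\bar x_{11}\bar u,
\]
so $[\bar x_{11},\bar c] \bar u = 0$. Choosing $\bar u = \bar v_1$ and multiplying on the right by $\bar w_1$, the identity $v_1 w_1 = g$ forces $[\bar x_{11},\bar c] = [\bar x_{11},\bar c] \bar g = 0$, whence $\bar x_{11} \in Z(\bar A_{11})$. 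The symmetric argument for $\bar x_{22} \in Z(\bar A_{22})$ is the main obstacle, since no single product $w_i v_i$ equals the unit $h = e_{22}+f_{22}+f_{33}$ of $A_{22}$; this is precisely where the hypothesis $M_2(\CC) \oplus M_3(\CC) \to A$ (rather than merely $M_2(\CC) \to A$) is crucial. The remedy is to combine both pairs via the identity $w_1 v_1 + w_2 v_2 - e_{21}e_{12} = h$ (which uses $e_{21}e_{12} = e_{22}$): together with $[\bar x_{22},\bar c]\bar w_i = 0$ (from the $\bar A_{21}$-intertwining) and $[\bar x_{22},\bar c]\bar e_{21} = 0$ (likewise), this yields $[\bar x_{22},\bar c]\bar h = [\bar x_{22},\bar c] = 0$.

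Finally, decomposing an arbitrary $\bar a \in \bar A$ into its four corners, the commutator $[\bar a,\bar x]$ splits into two diagonal components (which vanish by corner centrality) and two off-diagonal components (which vanish by the intertwining relations). Hence $\bar L \subseteq Z(\bar A)$, equivalently $[A,L] \subseteq AgLhA$, as required.
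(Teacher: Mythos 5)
Your proof is correct and reaches the same conclusion, but via a different organizational route from the paper. The paper works directly inside $A$: after reducing to the block-diagonal part $gxg+hxh$ of an element $x\in L$, it verifies $[A_{ij},gxg+hxh]\subseteq AgLhA$ one corner at a time, multiplying the commutator by explicit matrix units to land in $gLh$. You instead pass to the quotient $\bar A = A/AgLhA$ and reformulate the goal as $\bar L\subseteq Z(\bar A)$, then derive the structural intertwining relation $\bar x_{11}\bar u = \bar u\bar x_{22}$ (for $\bar u\in\bar A_{12}$) from block-diagonality of $\bar L$, and use it to deduce corner centrality. This framing is more conceptual and makes the role of the matrix units transparent: the only nontrivial step is to assemble the unit $\bar h$ of the second corner from the available products $w_1v_1$, $w_2v_2$, $e_{21}e_{12}$. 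Your covering $h = w_1v_1 + w_2v_2 - e_{21}e_{12}$ differs cosmetically from the paper's $h=(e_{21}+f_{21})(e_{12}+f_{12}) + f_{31}f_{13}$, but plays the same role, and you correctly identify this as the point where the $M_2\oplus M_3$ hypothesis is used. The underlying manipulations are closely related to the paper's, but the quotient/intertwining packaging is a genuinely different and cleaner presentation; one small thing worth stating explicitly is that $\bar L$ is indeed a Lie ideal in $\bar A$ (since $[A,AgLhA]\subseteq AgLhA$), which you use when asserting that $[\bar a,\bar x]\in\bar L$.
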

\begin{proof}
To simplify notation, we set $I = AgLhA$.
We have
\[
[A,gLh] \subseteq AgLh + gLhA \subseteq I.
\]
Further, using \autoref{prp:AgLhA-AhLgA} at the third step, we have
\[
[A,hLg] \subseteq AhLg + hLgA \subseteq AhLgA = AgLhA = I.
\]

Thus, given $x \in L$, we have $[A,gxh+hxg] \subseteq I$.
Note that $gxh, hxg \in L$ by \autoref{prp:InclusionsAAL}.
It follows that
\[
gxg + hxh = x - gxh - hxg \in L.
\]

We need to show that $[A,gxg + hxh] \subseteq I$.
Using additivity of the Lie bracket and that $I$ is an additive subgroup, it suffices to verify that $[A_{ij},gxg + hxh] \subseteq I$ for $i,j=1,2$.

\medskip

\emph{We show that $[A_{12},gxg + hxh] \subseteq I$.}
Given $a \in A$, we have
\[
[gah,gxg+hxh] 
= gahxh - gxgah.
\]
Using that $gxg + hxh \in L$, we have $[gah,gxg+hxh] \in L$, and thus
\[
[gah,gxg+hxh] 
\in L \cap gAh 
= gLh 
\subseteq AgLhA 
= I.
\]
We have shown that $[A_{12},gxg + hxh] \subseteq I$.

Similarly, and using also \autoref{prp:AgLhA-AhLgA} at the last step, we get
\[
[hag,gxg+hxh] 
= hagxg - hxhag \in L \cap hAg = hLg \subseteq AhLgA = I
\]
for every $a \in A$.
This shows that $[A_{21},gxg + hxh] \subseteq I$.

\medskip

\emph{We show that $[A_{11},gxg + hxh] \subseteq I$.}
Given $a \in A$, set $y = gagxg - gxgag$.
Then 
\[
y
= gagxg - gxgag
= [gag,gxg+hxh] 
\in [A,L] \subseteq L.
\]
Then
\[
y(e_{12}+f_{12})
= [y,e_{12}+f_{12}] 
\in [L,A] \subseteq L
\]
and thus
\[
y(e_{12}+f_{12})
\in L \cap gAh 
= gLh \subseteq I.
\]
We deduce that
\[
y 
= y(e_{12}+f_{12})(e_{21}+f_{21})
\in IA = I.
\]

\emph{We show that $[A_{22},gxg + hxh] \subseteq I$.}
Given $b \in A$, set $z = hbhxh - hxhbh$.
Then 
\[
z
= hbhxh - hxhbh
= [hbh,gxg+hxh] 
\in L \cap A_{22}.
\]
Then
\[
(e_{12}+f_{12})z
= [e_{12}+f_{12},z] \in L \cap gAh \subseteq I, \andSep
f_{13}z = [f_{13},z] \in I.
\]
We deduce that
\[
z = (e_{21}+f_{21})(e_{12}+f_{12})z + f_{31}f_{13}z \in AI+AI = I,
\]
as desired.
\end{proof}

%==========================================================================================
\begin{prp}
\label{prp:CharIdlAL}
Retain the notation from \autoref{pgr:Setup}, and let $L \subseteq A$ be a Lie ideal.
Then 
\[
A[A,L] = A[A,L]A = AgLhA = AhLgA.
\]
\end{prp}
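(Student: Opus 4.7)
The equality $AgLhA = AhLgA$ follows directly from \autoref{prp:AgLhA-AhLgA}, so the substantive task is to prove $A[A,L] = A[A,L]A = AgLhA$. My plan is to first show that $A[A,L]$ is already a two-sided ideal of $A$, and then sandwich $AgLhA$ between $A[A,L]$ and $A[A,L]A$ using the preceding lemmas in this section.

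To see that $A[A,L]$ is closed under right multiplication (it is obviously a left ideal), I would verify the commutator identity
\[
[b,x]c = b[x,c] + [bc,x]
\]
for $b, c \in A$ and $x \in L$ by directly expanding both sides. Since both $[x,c]$ and $[bc,x]$ lie in $[A,L]$, this yields $[A,L] \cdot A \subseteq A[A,L]$, and consequently $A[A,L]A \subseteq A[A,L]$, giving the equality $A[A,L] = A[A,L]A$.

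For the inclusion $A[A,L]A \subseteq AgLhA$, I would observe that $AgLhA$ is manifestly a two-sided ideal of $A$, and \autoref{prp:AL-in-I} gives $[A,L] \subseteq AgLhA$; multiplying on both sides by $A$ preserves this containment. Conversely, to establish $AgLhA \subseteq A[A,L]$, I would take a generator $agxhb$ with $x \in L$ and $a, b \in A$. The proof of \autoref{prp:InclusionsAAL} already shows $gxh \in L$, and since $hg = 0$ we have $(gxh)g = gx(hg) = 0$, so
\[
[g, gxh] = g(gxh) - (gxh)g = gxh.
\]
Therefore $gxh \in [A,L]$, and hence $agxhb = a(gxh)b \in A[A,L]A = A[A,L]$.

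The main obstacle I anticipate is the non-obvious point that $A[A,L]$ is a right ideal. Because $L$ is merely a Lie ideal rather than an associative ideal, this is not formal and would fail without a commutator identity of the sort above; once it is in hand, the rest of the argument is a clean sandwich between the results already developed in Lemmas \ref{prp:AgLhA-AhLgA} and \ref{prp:AL-in-I}.
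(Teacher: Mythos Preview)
Your proposal is correct and follows essentially the same approach as the paper. The only difference is that the paper cites an external reference (\cite[Lemma~3.2]{GarLeeThi24arX:FullyNoncentral}) for the fact that $A[A,L]=A[A,L]A$, whereas you supply the elementary commutator identity $[b,x]c = b[x,c] + [bc,x]$ directly; and for the inclusion $gLh \subseteq [A,L]$ the paper simply invokes \autoref{prp:InclusionsAAL}, while you re-derive it via $gxh = [g,gxh]$---but this is exactly the computation that already appears inside the proof of that lemma.
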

\begin{proof}
In general, if $M \subseteq A$ is a Lie ideal, then $AMA=AM$;
see, for example, \cite[Lemma~3.2]{GarLeeThi24arX:FullyNoncentral}.
Since $[A,L]$ is a Lie ideal, we get $A[A,L]=A[A,L]A$.

By \autoref{prp:AL-in-I}, we have $[A,L] \subseteq AgLhA$, and thus $A[A,L]A \subseteq AgLhA$.
By \autoref{prp:AgLhA-AhLgA}, we have $AgLhA = AhLgA$.
Finally, by \autoref{prp:InclusionsAAL}, we have $gLh \subseteq [A,L]$, and thus $AgLhA \subseteq A[A,L]A$.
\end{proof}

%==========================================================================================
\begin{lma}
\label{prp:AI-in-AAL}
Retain the notation from \autoref{pgr:Setup}, and let $L \subseteq A$ be a Lie ideal, and set $I = A[A,L]A$.
Then 
\[
[A,I] \subseteq [A,[A,L]].
\]
\end{lma}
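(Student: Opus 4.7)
The plan is to prove $[A, I] \subseteq [A, [A, L]]$ by expanding commutators of generators of $I$ and applying the block-wise identities of \autoref{prp:InclusionsAAL}. By \autoref{prp:CharIdlAL}, every element of $I$ is a sum of products $agxhb$ with $a, b \in A$ and $x \in L$; setting $y := gxh$, we have $y \in gLh \subseteq L$ (and in fact $y = [g, y]$ belongs to $[A, L]$), so it suffices to establish $[c, ayb] \in [A, [A, L]]$ for every $c \in A$.

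The first step is Leibniz's rule:
\[
[c, ayb] = [c, a]\,yb + a\,[c, y]\,b + ay\,[c, b].
\]
Each of the three summands is a triple product in which an element of $L$ (namely $y$ itself, or $[c, y] \in [A, L] \subseteq L$) is sandwiched between two elements of $A$. Next I would decompose the outer $A$-factors into their four $A_{ij}$-blocks. Since $y \in A_{12}$, the orthogonality relations $gh = hg = 0$ force every surviving block product $A_{pq}\, y\, A_{rs}$ to have $q = 1$ and $r = 2$; thus each subterm lies in one of $A_{p1}\, L\, A_{2s}$ with $p, s \in \{1, 2\}$. Four of these eight block products, namely $A_{11} L A_{22}$, $A_{22} L A_{11}$, $A_{12} L A_{12}$, $A_{21} L A_{21}$, are already contained in $[A, [A, L]]$ by \autoref{prp:InclusionsAAL}.

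\textbf{The main obstacle} is to treat the remaining off-diagonal products $A_{11} L A_{21}$, $A_{21} L A_{22}$ and their transposes. My plan is to adapt the polarization argument used in \autoref{prp:InclusionsAAL} to establish $A_{12} L A_{12} \subseteq [A, [A, L]]$: the identities $v_i w_i = g$ for $i = 1, 2$ allow us to write $\alpha = v_i(w_i \alpha)$ for $\alpha \in A_{11}$, so that $\alpha y \beta = v_i \cdot (w_i \alpha y \beta)$ with $w_i \alpha y \beta \in A_{21} L A_{21} \subseteq [A, [A, L]]$. The remaining left-multiplication by $v_i$ is handled using the Lie ideal property of $[A, [A, L]]$ (which gives $[v_i, [A, [A, L]]] \subseteq [A, [A, L]]$ via an easy Jacobi argument), together with a simultaneous use of both polarization pairs $(v_1, w_1)$ and $(v_2, w_2)$ to break the apparent circular dependence between the two off-diagonal cases. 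Assembling the contributions from all block combinations then yields $[c, ayb] \in [A, [A, L]]$, completing the proof.
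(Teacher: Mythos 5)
Your plan hinges on proving the inclusion $A_{11}LA_{21} \subseteq [A,[A,L]]$ (and its companion $A_{21}LA_{22} \subseteq [A,[A,L]]$), and that inclusion is simply false in general, so the polarization strategy cannot be made to work. Take $A = M_6(\CC)$ and $L = A$. Using a unital $*$-homomorphism $M_2(\CC)\oplus M_3(\CC) \to M_6(\CC)$ that kills the $M_2$-summand, $g$ is a rank-two projection with nonzero trace. One checks that $A_{11}LA_{21} = gAgLhAg = gAg$ (since $g,h$ are full in the simple algebra $M_6$), so $g \in A_{11}LA_{21}$; but $[A,[A,L]] = [M_6,[M_6,M_6]] = \mathfrak{sl}_6$ is the traceless matrices and does not contain $g$. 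Concretely this is also why the attempted remedy stalls: writing $\alpha y\beta = v_i(w_i\alpha y\beta)$ with $w_i\alpha y\beta \in [A,[A,L]]$ and trying to trade the left multiplication by $v_i$ for a commutator produces $[v_i, w_i\alpha y\beta] = \alpha y\beta - (w_i\alpha y\beta)v_i$, where the unwanted term $(w_i\alpha y\beta)v_i$ lands in $A_{21}LA_{22}$ — the other unresolved block. Averaging over $i=1,2$ to exploit $w_1v_1 + w_2v_2 = e_{22} + h$ cannot close this loop, because the target inclusion it would yield is false.

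There is also a smaller slip in the setup: in the Leibniz expansion $[c,ayb] = [c,a]yb + a[c,y]b + ay[c,b]$, the middle summand has $[c,y]$ in the sandwich, and $[c,y]$ is an element of $L$ but \emph{not} of $A_{12}$ (it has components in $A_{11}$, $A_{12}$, and $A_{22}$). So the orthogonality reduction "$q=1$, $r=2$" applies only to the first and third summands, and the block bookkeeping has more cases than acknowledged.

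The paper's proof of this lemma takes a route that deliberately avoids the false inclusion: it proves only the weaker statement $[A, I_{11}] \subseteq [A,[A,L]]$, never $I_{11} \subseteq [A,[A,L]]$. The commutators with the off-diagonal blocks $A_{12}$ and $A_{21}$ collapse to one-sided products because of orthogonality (e.g.\ $[gAh, I_{11}] = -(I_{11})(gAh)$), and these land in $A_{11}LA_{22}$ or $A_{21}LA_{21}$, which \autoref{prp:InclusionsAAL} covers. For commutators with $A_{11}$ itself, the paper first factors $I_{11} = (gAgLh)(Ag)$ and distributes the product to get $I_{11} \subseteq [A,L] + AgLh$, and then commuting this with $gAg$ again collapses by orthogonality into $[A,[A,L]]$. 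If you want to rescue your approach, the move you need is to aim from the start at $[A, A_{11}LA_{21}] \subseteq [A,[A,L]]$ rather than $A_{11}LA_{21} \subseteq [A,[A,L]]$, which is where the paper's factor-and-commute argument becomes indispensable.
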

\begin{proof}
For each $i,j=1,2$, set $I_{ij} := I \cap A_{ij}$.
Since $I$ is a two-sided ideal, we have $I=I_{11}+I_{12}+I_{21}+I_{22}$.
By \autoref{prp:CharIdlAL}, we have $I = AgLhA$, and thus
\[
I 
= AgLhA
= \underbrace{gAgLhAg}_{I_{11}} 
+ \underbrace{gAgLhAh}_{I_{12}} 
+ \underbrace{hAgLhAg}_{I_{21}}
+ \underbrace{hAgLhAh}_{I_{22}}.
\]
%\[
%I = AgLhA
%= A_{11}LA_{21} + A_{11}LA_{22} + A_{21}LA_{21} + A_{21}LA_{22}.
%\]
By additivity of the Lie bracket and since $[A,L]$ is an additive subgroup, it suffices to show that $[A,I_{ij}] \subseteq [A,[A,L]]$ for each $i,j=1,2$.

By \autoref{prp:InclusionsAAL}, we have 
\[
I_{12} = gAgLhAh \subseteq [A,L], \andSep
I_{21} = hAgLhAg \subseteq [A,L],
\]
and thus $[A,I_{12}] \subseteq [A,[A,L]]$ and $[A,I_{21}] \subseteq [A,[A,L]]$.

\medskip

\emph{We show that $[A,I_{11}] \subseteq [A,[A,L]]$.}
It suffices to verify that $[A_{ij},I_{11}] \subseteq [A,[A,L]]$ for each $i,j=1,2$.
Using that $hg=0$, and using \autoref{prp:InclusionsAAL} at the last step, we get
\[
[A_{12},I_{11}]
= [gAh,gAgLhAg]
= -(gAgLhAg)(gAh)
= gAgLhAgAh
\subseteq [A,[A,L]].
\]
Similarly, we have
\[
[A_{21},I_{11}]
= [hAg,gAgLhAg]
= (hAg)(gAgLhAg)
= hAgAgLhAg
\subseteq [A,[A,L]].
\]
We also have $[A_{22},I_{11}]=\{0\}\subseteq [A,[A,L]]$.

Further, using \autoref{prp:InclusionsAAL} at the last step, we have
\[
I_{11} 
= (gAgLh)Ag
\subseteq [gAgLh,Ag]+Ag(gAgLh)
%\subseteq [L,A] + AgAgLh
\subseteq [A,L] + AgLh.
\]
Using again \autoref{prp:InclusionsAAL} at the last step, we get
\begin{align*}
[A_{11},I_{11}]
&\subseteq [A_{11},[A,L]] + [A_{11},AgLh]
\subseteq [A,[A,L]] +[gAg,AgLh] \\
&\subseteq [A,[A,L]] +gAgAgLh 
\subseteq [A,[A,L]].
\end{align*}
We have verified that $[A,I_{11}] \subseteq [A,[A,L]]$.

Analogously, one shows that $[A,I_{22}] \subseteq [A,[A,L]]$, which completes the proof.
\end{proof}

%==========================================================================================
The next result verifies statement~(1) in \autoref{prp:MapFromM2M3}.

%==========================================================================================
\begin{thm}
\label{prp:L-Related-IdlAL}
Let $A$ be a unital \ca{} that admits a unital \stHom{} $M_2(\CC) \oplus M_3(\CC) \to A$, and let $L \subseteq A$ be a (not necessarily closed) Lie ideal.
Then, for the two-sided ideal $I:=A[A,L]A$, we have
\begin{equation}
\label{eq:L-Related-IdlAL}
[A,I] = [A,[A,L]] \subseteq [A,L] \subseteq I, \andSep
[A,I] \subseteq L \subseteq T(I).
\end{equation}
Further, $I$ is the only two-sided ideal of $A$ to which $L$ is related.
\end{thm}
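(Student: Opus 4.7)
My plan is as follows. First, I establish the chain of inclusions \eqref{eq:L-Related-IdlAL} directly from the preceding lemmas. Since $[A,L] \subseteq A[A,L]A = I$, applying $[A,\cdot]$ gives $[A,[A,L]] \subseteq [A,I]$, and the reverse inclusion is precisely \autoref{prp:AI-in-AAL}, so $[A,I] = [A,[A,L]]$. The Lie ideal property $[A,L] \subseteq L$ yields $[A,[A,L]] \subseteq [A,L]$, and $[A,L] \subseteq I$ is immediate. For the second chain, $[A,I] \subseteq [A,L] \subseteq L$ gives $[A,I] \subseteq L$, and $L \subseteq T(I)$ is equivalent to the already-established $[A,L] \subseteq I$.

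For uniqueness, I would let $J$ be a two-sided ideal with $[A,J] \subseteq L \subseteq T(J)$. The containment $L \subseteq T(J)$ gives $[A,L] \subseteq J$, hence $I \subseteq J$. For the reverse inclusion, I would first establish the auxiliary structural identity: \emph{every two-sided ideal $J$ of $A$ satisfies $J = A[A,J]A$}. Granting this, \autoref{prp:AI-in-AAL} applied to $J$ as a Lie ideal yields $[A,J] = [A,A[A,J]A] \subseteq [A,[A,J]]$, while the reverse inclusion is immediate from $[A,J] \subseteq J$, so $[A,J] = [A,[A,J]]$. Combining with $[A,J] \subseteq L$, this gives $[A,J] = [A,[A,J]] \subseteq [A,L]$, whence $A[A,J]A \subseteq A[A,L]A = I$, and therefore $J = A[A,J]A \subseteq I$.

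The main obstacle is the auxiliary identity $J = A[A,J]A$, which is not contained in the preceding lemmas. To prove $J \subseteq A[A,J]A$, I would use \autoref{prp:CharIdlAL} (applied to the Lie ideal $J$) to reduce the task to showing $J \subseteq AgJhA$. Using $1 = g + h$, decompose each $y \in J$ as $gyg + gyh + hyg + hyh$. The off-diagonal pieces $gyh$ and $hyg$ sit in $AgJhA$ immediately (the latter via \autoref{prp:AgLhA-AhLgA}). For the diagonal pieces, refine using the sub-projections $e_{11},f_{11}$ of $g$ and $e_{22},f_{22},f_{33}$ of $h$, and use matrix-unit identities such as $e_{11} y e_{11} = (e_{11} y e_{12}) e_{21}$, in which $e_{11} y e_{12} \in gAh \cap J = gJh$; analogous identities involving $f_{12}, f_{13}, f_{21}, f_{31}$ dispatch the remaining cells. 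Non-injective cases of $\varphi$ cause some matrix units to vanish, but the decomposition simplifies accordingly, and such cases also reduce, as the paper notes, to Marcoux's classical result via $A \cong M_n(B)$.
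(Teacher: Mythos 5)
Your proof of the chain of inclusions \eqref{eq:L-Related-IdlAL} matches the paper's argument exactly: the combination of \autoref{prp:AL-in-I}, \autoref{prp:CharIdlAL}, \autoref{prp:AI-in-AAL}, and the Lie ideal property $[A,L]\subseteq L$ is precisely how the paper proceeds, and your reduction of $L\subseteq T(I)$ to $[A,L]\subseteq I$ is also the same.

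For uniqueness, you take a genuinely different and substantially longer route. The paper's argument for $J\subseteq I$ is short: apply \autoref{prp:InclusionsAAL} to $J$ (viewed as a Lie ideal) to get $gJh\subseteq[A,[A,J]]\subseteq[A,J]\subseteq L$, so that $gJh\subseteq L\cap gAh=gLh$; then note that $g$ and $h$ are full projections, hence $A=AgA=AhA$, which gives in one line $J=AJA=AgAJAhA=AgJhA\subseteq AgLhA=I$, using \autoref{prp:CharIdlAL} for the last equality. Your proposal instead sets out to prove the structural identity $J=A[A,J]A$ by reducing (via \autoref{prp:CharIdlAL}) to $J\subseteq AgJhA$ and then running a cell-by-cell matrix-unit computation over the decomposition induced by $e_{11},f_{11},e_{22},f_{22},f_{33}$, before invoking \autoref{prp:AI-in-AAL} to get $[A,J]=[A,[A,J]]$ and finally concluding $J=A[A,J]A\subseteq A[A,L]A=I$. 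This is correct if carried out carefully (the identities such as $e_{11}ye_{11}=(e_{11}ye_{12})e_{21}$, $e_{11}yf_{11}=(e_{11}yf_{12})f_{21}$, etc., do handle all nine $h$-cells and four $g$-cells, with vanishing matrix units only simplifying matters), but it is significantly more labor than necessary: the fullness observation $A=AgA=AhA$ gives $J=AgJhA$ for free, which is exactly what your matrix-unit computation is reconstructing by hand, and once you have $gJh\subseteq L$ from \autoref{prp:InclusionsAAL} plus $[A,J]\subseteq L$, the detour through $[A,J]=[A,[A,J]]$ is not needed. Your approach buys an explicit identity $J=A[A,J]A$ along the way (which the paper later proves as part of \autoref{prp:AAI-AI} using \autoref{prp:L-Related-IdlAL} itself), but at the cost of a considerably longer argument.
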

\begin{proof}
Using \autoref{prp:AL-in-I} at the first step, and \autoref{prp:CharIdlAL} at the second step, we have
\[
[A,L] \subseteq AgLhA = I.
\]
This implies that $[A,[A,L]] \subseteq [A,I]$, and the converse inclusion $[A,I] \subseteq [A,[A,L]]$ is shown in \autoref{prp:AI-in-AAL}.
Using that $L$ is a Lie ideal, we have $[A,L] \subseteq L$, and therefore $[A,[A,L]] \subseteq [A,L]$.
This shows the left chain of inclusions in \eqref{eq:L-Related-IdlAL}.

The inclusion $[A,I] \subseteq L$ follows using that $[A,I] \subseteq [A,L]$ and $[A,L] \subseteq L$.
Finally, since $[A,L] \subseteq I$, we have $L \subseteq T(I)$.

To show uniqueness of $I$, let $J$ be a two-sided ideal that is related to $L$, that is, such that
\[
[A,J] \subseteq L \subseteq T(J).
\]
Then $[A,L] \subseteq J$, and since $I$ is the two-sided ideal generated by $[A,L]$ we get $I \subseteq J$.

Applying \autoref{prp:InclusionsAAL} at the first step for $J$, considered as a Lie ideal, we get
\[
gJh \subseteq [A,J] \subseteq L,
\]
and thus
\[
gJh \subseteq L \cap gAh = gLh.
\]
Since $g$ and $h$ are full, we have $A=AgA$ and $A=AhA$, and consequently
\[
J
= AJA
= AgAJAhA
= AgJhA
\subseteq AgLhA
= I.
\]
In conclusion, we get $I=J$, as desired.
\end{proof}

%==========================================================================================
\begin{cor}
\label{prp:ClosedLieIdl}
Let $A$ be a unital \ca{} that admits a unital \stHom{} $M_2(\CC) \oplus M_3(\CC) \to A$, and let $L \subseteq A$ be a closed Lie ideal.
Then, the two-sided ideal $I:=A[A,L]A$ is closed.
\end{cor}
\begin{proof}
By \autoref{prp:L-Related-IdlAL}, $L$ is related to $I$, that is, we have $[A,I] \subseteq L \subseteq T(I)$.
Using that~$L$ is closed, it follows that the closure $\overline{I}$ satisfies $[A,\overline{I}] \subseteq L \subseteq T(\overline{I})$.
Since~$I$ is the only two-sided ideal of $A$ to which $L$ is related (\autoref{prp:L-Related-IdlAL}), we get $I=\overline{I}$.
\end{proof}

%==========================================================================================
The next result verifies statement~(2) in \autoref{prp:MapFromM2M3}.

%==========================================================================================
\begin{thm}
\label{prp:CharLieEmbr}
Let $A$ be a unital \ca{} that admits a unital \stHom{} $M_2(\CC) \oplus M_3(\CC) \to A$, and let $L \subseteq A$ be a (not necessarily closed) Lie ideal.
Then the following statements are equivalent:
\begin{enumerate}
\item
The Lie ideal $L$ is commutator equivalent to the two-sided ideal $A[A,L]A$.
\item
The Lie ideal $L$ is embraced by some two-sided ideal.
\item
We have $[A,L] = [A,[A,L]]$.
\end{enumerate}
\end{thm}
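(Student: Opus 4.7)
The plan is to prove the cycle $(3) \Rightarrow (1) \Rightarrow (2) \Rightarrow (3)$, using almost exclusively the previously established \autoref{prp:L-Related-IdlAL}, which supplies both the key identity $[A, A[A,L]A] = [A,[A,L]]$ and the uniqueness of the two-sided ideal to which $L$ is related. Throughout, write $I := A[A,L]A$.

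For $(3) \Rightarrow (1)$, assume $[A,L] = [A,[A,L]]$. By \autoref{prp:L-Related-IdlAL} we have $[A,I] = [A,[A,L]]$, and combining these gives $[A,I] = [A,L]$, which is by definition commutator equivalence of $L$ and $I$.

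For $(1) \Rightarrow (2)$, suppose $L$ is commutator equivalent to some two-sided ideal $J$, so $[A,J] = [A,L]$. Then $[A,J] = [A,L] \subseteq L$ since $L$ is a Lie ideal, and for every $x \in L$ we have $[A,x] \subseteq [A,L] = [A,J]$, which says $L \subseteq T([A,J])$. Hence $L$ is embraced by $J$.

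For $(2) \Rightarrow (3)$, suppose $L$ is embraced by some two-sided ideal $J$, meaning $[A,J] \subseteq L \subseteq T([A,J])$. Since $[A,J] \subseteq J$, we have $T([A,J]) \subseteq T(J)$, so $L$ is also related to $J$. The uniqueness assertion in \autoref{prp:L-Related-IdlAL} then forces $J = I$. Embracement therefore gives $L \subseteq T([A,I])$, that is, $[A,L] \subseteq [A,I]$, and \autoref{prp:L-Related-IdlAL} identifies the right-hand side as $[A,[A,L]]$. The reverse inclusion $[A,[A,L]] \subseteq [A,L]$ is immediate from $[A,L] \subseteq L$.

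There is no serious obstacle here; the whole point is that the heavy lifting has been done in \autoref{prp:L-Related-IdlAL}. The only subtle step is $(2) \Rightarrow (3)$, where one must notice that embracement implies relatedness in order to invoke the uniqueness clause — if $L$ were merely known to be embraced by some unspecified ideal, one could not directly conclude anything about $I = A[A,L]A$ without this identification.
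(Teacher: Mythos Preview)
Your proof is correct and follows essentially the same approach as the paper: both argue the cycle $(1)\Rightarrow(2)\Rightarrow(3)\Rightarrow(1)$, using the identity $[A,I]=[A,[A,L]]$ from \autoref{prp:L-Related-IdlAL} for $(3)\Rightarrow(1)$ and, crucially, invoking the uniqueness clause of \autoref{prp:L-Related-IdlAL} to identify the embracing ideal with $I=A[A,L]A$ in $(2)\Rightarrow(3)$. Your write-up is slightly more explicit (spelling out why commutator equivalence implies embracement, and why embracement implies relatedness via $T([A,J])\subseteq T(J)$), but the logical structure is identical.
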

\begin{proof}
It is clear that~(1) implies~(2).
Assuming~(2), let us show that~(3) holds.
Set $I := A[A,L]A$, and let $J \subseteq A$ be some two-sided ideal that embraces $L$.
Then~$L$ is also related to $J$.
It follows that $I=J$, since $I$ is the unique two-sided ideal to which $L$ is related, by \autoref{prp:L-Related-IdlAL}.
It follows that $L \subseteq T([A,I])$.
Using this at the first step, and using \autoref{prp:L-Related-IdlAL} at the second step, we get
\[
[A,L] \subseteq [A,I] = [A,[A,L]],
\]
which  verifies~(3).

Assuming~(3), let us show that~(1) holds.
Using that assumption at the firs step, and \autoref{prp:L-Related-IdlAL} at the second step, we get
\[
[A,L] = [A,[A,L]] = [A,I],
\]
which verifies~(1).
\end{proof}

%==========================================================================================
The next result is essentially contained in \cite[Proposition~5.7]{PerRor04AFembeddings}.
We only show how to remove the separability assumption.
A \emph{character} on a \ca{} $A$ is one-dimensional, irreducible representation, that is, a surjective \stHom{} $A \to \CC$.

%==========================================================================================
\begin{prp}[Perera, R{\o}rdam]
\label{prp:RR0}
Let $A$ be a unital \ca{} of real rank zero.
Then $A$ has no characters if and only if $A$ admits a unital \stHom{} $M_2(\CC) \oplus M_3(\CC) \to A$.
\end{prp}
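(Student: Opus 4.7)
The plan is as follows. For the `if' direction, suppose $\varphi \colon M_2(\CC) \oplus M_3(\CC) \to A$ is a unital \stHom. Since both summands are simple, the only quotients of $M_2(\CC) \oplus M_3(\CC)$ are $\{0\}$, $M_2(\CC)$, $M_3(\CC)$, and $M_2(\CC) \oplus M_3(\CC)$, none of which is $\CC$. Hence $M_2(\CC) \oplus M_3(\CC)$ admits no characters, and any hypothetical character $\chi \colon A \to \CC$ would yield the character $\chi \circ \varphi$, a contradiction.

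For the `only if' direction, I reduce to the separable case handled in \cite[Proposition~5.7]{PerRor04AFembeddings}. The first step is to note that `no characters' admits a finite algebraic witness: the quotient of $A$ by the closed two-sided ideal $\overline{A[A,A]A}$ has vanishing commutators and is therefore commutative, so $A$ has no characters if and only if $\overline{A[A,A]A} = A$, which in turn happens if and only if $1 \in A[A,A]A$ (since a closed ideal containing an element sufficiently close to $1$ contains an invertible element). Hence I fix finitely many $a_i, x_i, y_i, b_i \in A$, $i=1,\ldots,n$, with $1 = \sum_i a_i[x_i,y_i]b_i$.

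The second step is a standard Blackadar-Loring inductive construction: starting from the separable unital $C^*$-subalgebra generated by the chosen witnesses, iteratively enlarge by adjoining, for each element of a countable dense subset of the self-adjoint part of the current stage and each $k \geq 1$, a self-adjoint element of $A$ with finite spectrum that approximates it to within $1/k$. The $C^*$-subalgebra $B \subseteq A$ obtained as the closure of the union is separable, unital, and of real rank zero, and still contains the witnesses, so $1 \in B[B,B]B$ and $B$ has no characters. Applying the separable case to $B$ produces a unital \stHom{} $M_2(\CC) \oplus M_3(\CC) \to B \subseteq A$. The main subtlety is that the absence of characters is not hereditary to arbitrary unital $C^*$-subalgebras, which is precisely why the finite algebraic witness for $1 \in A[A,A]A$ is essential: it is what survives the passage to a separable subalgebra.
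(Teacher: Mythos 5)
Your proof is correct and follows essentially the same route as the paper: reduce to the separable case via a Löwenheim--Skolem-type construction exploiting the separable inheritability of real rank zero, and then invoke \cite[Proposition~5.7]{PerRor04AFembeddings}. The only difference is that you replace the paper's citation to \cite[Lemma~3.5]{KirRor15CentralSeqCharacters} by a self-contained argument that pins down a finite algebraic witness $1 = \sum_i a_i[x_i,y_i]b_i$ ensuring the no-characters property descends to the separable subalgebra, which is a legitimate and slightly more explicit substitute.
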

\begin{proof}
The backwards implication is clear.
To show the forward implication, assume that $A$ has no characters.
By \cite[Lemma~3.5]{KirRor15CentralSeqCharacters}, there exists a \emph{separable} sub-\ca{} $A_0 \subseteq A$ containing the unit of~$A$.
Using that real rank zero is separably inheritable (see \cite[Section~II.8.5]{Bla06OpAlgs}), we obtain a \emph{separable} sub-\ca{} $B \subseteq A$ of real rank zero with $A_0 \subseteq B$.
It follows that $B$ has no characters, which allows us to apply \cite[Proposition~5.7]{PerRor04AFembeddings} for $B$.
We obtain a unital \stHom{} from $M_2(\CC) \oplus M_3(\CC)$ to $B$, and thus also to~$A$.
\end{proof}

%==========================================================================================
\begin{exa}
\label{exa:RR0}
Let $A$ be a unital \ca{} of real rank zero that has no characters.
Then \autoref{prp:L-Related-IdlAL} applies to $A$.
In particular, every Lie ideal in $A$ is related to a (unique) two-sided ideal.
\end{exa}

%==========================================================================================
We expect that the following question has a positive answer.

%==========================================================================================
\begin{qst}
\label{qst:NoChar}
Let $A$ be a unital \ca{} that has no characters.
Is every Lie ideal in $A$ is related to a (unique) two-sided ideal?
\end{qst}

%==========================================================================================
%==========================================================================================
\section{Two-sided ideals in C*-algebras with weakly divisible unit}

%==========================================================================================
In this section, we prove that every two-sided ideal $I$ in a \ca{} $A$ that admits a unital \stHom{} $M_2(\CC) \oplus M_3(\CC) \to A$ satisfies $I = A[A,I]A$, and we describe $[A,I]$ as the subspace spanned by certain square-zero elements in $I$;
see \autoref{prp:AAI-AI}.
If $I$ is a Dixmier ideal (in which case there is a well-defined ideal~$I^{\frac{1}{2}}$ satisfying $(I^{\frac{1}{2}})^2 = I$), then $I = [A,I] + [A,I^{\frac{1}{2}}]^2$;
see \autoref{prp:I-Dixmier}.
In this case every element in $I$ is a sum of square-zero elements in $I$ and of products of pairs of square-zero elements in~$I^{\frac{1}{2}}$.

\medskip

%==========================================================================================
The next definition generalizes the notion of orthogonally factorizable square-zero elements in a ring (\cite[Deﬁnition~5.1]{GarThi23ZeroProdBalanced}) to a relative notion with respect to a two-sided ideal.

%==========================================================================================
\begin{dfn}
\label{dfn:FN2}
Let $I$ be a two-sided ideal in a \ca{} $A$.
We let $N_2(I)$ denote the square-zero elements in $I$, that is, 
\[
\N_2(I) := \big\{ x \in I : x^2=0 \big\}.
\]
We further set
\[ 
\FN_2(A,I) 
:= \big\{ x \in I : \text{ there exist } a,b \in A, y \in I \text{ with } x=ayb \mbox{ and } ba=0 \big\}.
\] 
\end{dfn}

%==========================================================================================
\begin{lma}
\label{prp:FN2PositiveSupport}
Let $I \subseteq A$ be a two-sided ideal in a \ca{}, and let $x \in \FN_2(A,I)$.
Then there exist $a,b \in A_+$ and $y \in \FN_2(A,I) \subseteq I$ such that
\[
x = ayb, \andSep ab=ba=0.
\]
\end{lma}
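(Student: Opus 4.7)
The plan is to start from the definition of $\FN_2(A,I)$ and write $x = a'y'b'$ with $a',b' \in A$, $y' \in I$, and $b'a' = 0$; then I would absorb positive ``pieces'' of $a'$ and $b'$ into the outer factors, leaving a middle term that still witnesses membership in $\FN_2(A,I)$. The natural candidates are $a := |a'^*|^{1/2}$ and $b := |b'|^{1/2}$, both in $A_+$.

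The first task is to derive $|a'^*|\,|b'| = 0$ from $b'a' = 0$. Right-multiplying by $a'^*$ gives $b'|a'^*|^2 = 0$, whence $\|b'\,|a'^*|\|^2 = \|b'\,|a'^*|^2\,b'^*\| = 0$, so $b'\,|a'^*| = 0$. Taking adjoints gives $|a'^*|\,b'^* = 0$, and repeating the same trick on $|a'^*|\,b'^*b' = |a'^*|\,|b'|^2$ yields $|a'^*|\,|b'| = 0$. Since $a$ and $b$ are positive with $a^2b^2 = 0$, the standard fact that commuting-product-zero positive elements satisfy $c^\alpha d^\beta = 0$ for all $\alpha,\beta > 0$ delivers $ab = ba = 0$.

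The technical heart is a factorization $a' = a\tilde a$ and $b' = \tilde b\, b$ with $\tilde a, \tilde b \in A$. Let $a' = |a'^*|u$ and $b' = v|b'|$ be the polar decompositions in $A^{**}$, so the natural choices are $\tilde a := au$ and $\tilde b := vb$. A Pedersen-type approximation lifts these into $A$: since $|t - t^2/(1/n+t)| = t/(1+nt) \leq 1/n$ on $[0,\|a\|]$, the elements $(1/n+a)^{-1}a' \in A$ (which equal $(1/n+a)^{-1}a^2 u$ in $A^{**}$) converge in norm to $au$, forcing $au \in A$; analogously $vb \in A$. By construction $a\tilde a = a^2u = |a'^*|u = a'$ and $\tilde b\, b = v|b'| = b'$, while $\tilde a\tilde a^* = |a'^*|$ and $\tilde b^*\tilde b = |b'|$ (the range/support projections coming from the polar decompositions act trivially on $a$ and $b$).

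Finally, set $y := \tilde a\, y'\,\tilde b \in I$; then $ayb = a\tilde a\,y'\,\tilde b b = a'y'b' = x$. To confirm $y \in \FN_2(A,I)$ via the factorization $y = \tilde a\, y'\,\tilde b$, I verify $\tilde b\tilde a = 0$: compute $(\tilde b\tilde a)^*(\tilde b\tilde a) = \tilde a^*|b'|\tilde a$, which is positive, and note that its square equals $\tilde a^*\,|b'|\,|a'^*|\,|b'|\,\tilde a = 0$ by the orthogonality $|a'^*|\,|b'| = 0$; a positive element with vanishing square is zero. The main obstacle to carrying out the plan cleanly is the Pedersen-type lifting of the $A^{**}$-elements $au$ and $vb$ back into $A$; everything else is bookkeeping with polar decompositions and absolute values.
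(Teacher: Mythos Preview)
Your proof is correct and follows essentially the same strategy as the paper: extract the positive outer factors $a=|a'^*|^{1/2}$ and $b=|b'|^{1/2}$ via the polar decompositions, push the ``half-polar'' pieces into the middle factor, and verify orthogonality and membership in $A$. The only differences are cosmetic --- the paper obtains $ba=0$ by a functional-calculus argument and cites \cite[Proposition~2.1]{GarKitThi23arX:SemiprimeIdls} for $\tilde a,\tilde b\in A$ (where you give the Pedersen-type approximation explicitly), and it certifies $y\in\FN_2(A,I)$ by a quarter-power refactorization rather than by showing $\tilde b\tilde a=0$ directly.
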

\begin{proof}
By definition, there are $c,d \in A$ and $z \in I$ such that
\[
x = czd, \andSep dc=0.
\]
Let $c=v|c|$ and $d=w|d|$ the polar decompositions in $A^{**}$ and set
\[
a := |c^*|^{\frac{1}{2}}, \quad
b := |d|^{\frac{1}{2}}, \andSep
y := |c^*|^{\frac{1}{2}}vzw|d|^{\frac{1}{2}}.
\]
By properties of the polar decomposition (see, for example, \cite[Proposition~2.1]{GarKitThi23arX:SemiprimeIdls}), we have $|c^*|^{\frac{1}{2}}v, w|d|^{\frac{1}{2}} \in A$, and thus $y \in A$.
Further, we have $x=ayb$.

Since $dc=0$, we have $(d^*d)^n(cc^*)^m=0$ for every $m,n \geq 1$.
It follows that $p(d^*d)q(cc^*)=0$ for every polynomials $p$ and $q$ with vanishing constant terms.
Using functional calculus, we get $f(d^*d)g(cc^*)=0$ for every continuous functions $f,g \colon \mathbb{R} \to \mathbb{R}$ with $f(0)=g(0)=0$.
In particular, we get $|d|^{\frac{1}{2}}|c^*|^{\frac{1}{2}}=0$.
Thus, we have $ba=0$, and then also $ab=0$.

To show that $y$ belongs to $\FN_2(A,I)$, consider the decomposition
\[
y = |c^*|^{\frac{1}{4}} \big( |c^*|^{\frac{1}{4}}vzw|d|^{\frac{1}{4}} \big) |d|^{\frac{1}{4}}.
\]
Then $|c^*|^{\frac{1}{4}}vzw|d|^{\frac{1}{4}}$ belongs to $I$, and we have also seen above that $|c^*|^{\frac{1}{4}}$ and $|d|^{\frac{1}{4}}$ have zero product, whence $y \in \FN_2(A,I)$.
\end{proof}

%==========================================================================================
The next result is analogous to \cite[Lemma~5.2]{GarThi23ZeroProdBalanced}.

%==========================================================================================
\begin{lma}
\label{prp:FN2}
Let $I \subseteq A$ be a two-sided ideal in a \ca{}.
Then
\[
\FN_2(A,I)
\subseteq [A,\FN_2(A,I)]
\subseteq [A,[A,I]]
\subseteq [A,I], \andSep
\FN_2(A,I)
\subseteq \N_2(I).
\]
\end{lma}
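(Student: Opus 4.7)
The plan is to handle the four inclusions in turn, with the substance lying in the second one.

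The inclusion $\FN_2(A,I) \subseteq \N_2(I)$ is immediate from \autoref{dfn:FN2}: if $x = ayb$ with $y \in I$ and $ba=0$, then $x^2 = ay(ba)yb = 0$ and $x \in I$. Equally easy is the last link $[A,[A,I]] \subseteq [A,I]$, which follows by applying $[A,-]$ to $[A,I] \subseteq I$; the latter holds since $I$ is a two-sided ideal.

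The heart of the argument is $\FN_2(A,I) \subseteq [A,\FN_2(A,I)]$. Given $x \in \FN_2(A,I)$, I would first invoke \autoref{prp:FN2PositiveSupport} to replace the given factorization by one of the form $x = ayb$ with $a,b \in A_+$, $y \in I$, and $ab = ba = 0$. Since $a$ and $b$ are orthogonal positive elements, functional calculus gives $a^{1/2} b = 0$ and $b^{1/2} a^{1/2} = 0$. The key identity is then
\[
[a^{1/2},\, a^{1/2} y b] \,=\, a y b - a^{1/2} y (b a^{1/2}) \,=\, a y b \,=\, x,
\]
exhibiting $x$ as a single commutator of $a^{1/2} \in A$ with $a^{1/2} y b$. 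To close the loop I would verify $a^{1/2} y b \in \FN_2(A,I)$ by means of the decomposition $a^{1/2} \cdot (y b^{1/2}) \cdot b^{1/2}$, noting that $y b^{1/2} \in I$ and that $b^{1/2} a^{1/2} = 0$ supplies the orthogonality demanded by \autoref{dfn:FN2}.

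The remaining middle inclusion $[A,\FN_2(A,I)] \subseteq [A,[A,I]]$ is then formal: combining the step just completed with the trivial $\FN_2(A,I) \subseteq I$ yields $\FN_2(A,I) \subseteq [A,I]$, and applying $[A,-]$ on both sides gives the desired containment. The only nonroutine ingredient is the commutator identity above; the trick is to split off the positive square root $a^{1/2}$ into the outer slot of the commutator so that orthogonality eliminates the unwanted second term, while simultaneously the inner factor remains visibly of the form required by \autoref{dfn:FN2}. Everything else is bookkeeping.
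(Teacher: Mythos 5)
Your proposal is correct and follows essentially the same route as the paper: reduce to a factorization with orthogonal positive elements via \autoref{prp:FN2PositiveSupport}, write $x = [a^{1/2}, a^{1/2}yb]$, check that the inner factor still lies in $\FN_2(A,I)$, and then obtain the remaining inclusions formally. The only cosmetic difference is how you certify $a^{1/2}yb \in \FN_2(A,I)$ (you repackage it as $a^{1/2}\cdot(yb^{1/2})\cdot b^{1/2}$ with $b^{1/2}a^{1/2}=0$, while the paper keeps the factorization $a^{1/2}\cdot y\cdot b$ and notes $ba^{1/2}=0$), and how you justify the last link ($[A,I]\subseteq I$ versus $[A,I]$ being a Lie ideal); both are equivalent.
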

\begin{proof}
Let $x \in \FN_2(A,I)$.
By \autoref{prp:FN2PositiveSupport}, we can pick $a,b \in A_+$ and $y \in I$ such that $x=ayb$ and $ba=0$.
As in the proof of \autoref{prp:FN2PositiveSupport}, we see that $ba^{\frac{1}{2}}=0$.
It follows that $a^{\frac{1}{2}}yb \in \FN_2(A,I)$ and then
\[
x 
= ayb 
= \big[ a^{\frac{1}{2}}, a^{\frac{1}{2}}yb \big]
\in [A,\FN_2(A,I)].
\]

Since $\FN_2(A,I)$ is a subset of $I$, we have $[A,\FN_2(A,I)] \subseteq [A,I]$.
Combining these results, and using at the last step that $[A,I]$ is a Lie ideal, we get
\[
\FN_2(A,I)
\subseteq [A,\FN_2(A,I)]
\subseteq [A,[A,\FN_2(A,I)]]
\subseteq [A,[A,I]]
\subseteq [A,I].
\]

Given $x = ayb \in \FN_2(A,I)$, with $a,b \in A$ and $y \in I$ such that $ba=0$, we have
\[
x^2 = (ayb)(ayb) = 0, 
\]
and thus $x \in \N_2(I)$.
\end{proof}

%==========================================================================================
\begin{rmk}
We note that \autoref{dfn:FN2} and \cite[Deﬁnition~5.1]{GarThi23ZeroProdBalanced} agree when considering the ideal $I=A$ in a \ca{} $A$.
Indeed, an element $x$ in $A$ belongs to $\FN_2(A)$ in the sense of \cite[Deﬁnition~5.1]{GarThi23ZeroProdBalanced} if there exist $s,t \in A$ such that $x = st$ and $ts=0$, while $x$ belongs to $\FN_2(A,A)$ as defined in \autoref{dfn:FN2} if there exist $a,y,b \in A$ such that $x=ayb$ and $ba=0$.

Given a factorization $x=ayb$ with $ba=0$, we can use $s=a$ and $t=yb$.
Conversely, given a factorization $x=st$ with $ts=0$, it follows that $|t|^{\frac{1}{2}}s=0$.
We consider the polar decomposition $t=v|t|$ in $A^{**}$.
Then $v|t|^{\frac{1}{2}}$ belongs to $A$, and we can use $a=s$, $y=v|t|^{\frac{1}{2}}$ and $b=|t|^{\frac{1}{2}}$.
\end{rmk}

%==========================================================================================
Given a subset $V$ in a complex vector space, we use $\linSpan_\CC V$ to denote the linear subspace generated by $V$.
Following \cite[Deﬁnition~2.6]{GarThi23ZeroProdBalanced}, we say that a $\CC$-algebra~$A$ is \emph{zero-product balanced} if for all $a,b,c \in A$ we have
\[
ab \otimes c - a\otimes bc \in \linSpan_\CC \big\{ u \otimes v \in A \otimes_\CC A : uv=0 \big\} 
\subseteq  A \otimes_\CC A.
\]
This is closely related to the concept of `zero-product determination', for which we refer to \cite{Bre21BookZeroProdDetermined}.
By \cite[Corollary~2.12]{GarThi23ZeroProdBalanced}, a (not necessarily unital) \ca{} is zero-product balanced as a $\CC$-algebra if and only if it is zero-product determined.

We see that the class of \ca{s} considered in this section are zero-product balanced:

%==========================================================================================
\begin{lma}
\label{prp:ZPBalanced}
Let $A$ be a unital \ca{} that admits a unital $*$-homomorphism $M_2(\CC) \oplus M_3(\CC) \to A$.
Then $A$ is zero-product balanced as a $\CC$-algebra.
\end{lma}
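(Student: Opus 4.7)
The plan is to exploit the block decomposition from \autoref{pgr:Setup} together with the nilpotent matrix units inside $M_2(\CC) \oplus M_3(\CC)$. Setting $K := \linSpan_\CC\{u \otimes v \in A \otimes_\CC A : uv = 0\}$, I would first record that $K$ is stable under the natural $A$-bimodule structure on $A \otimes_\CC A$: if $uv = 0$ then $(au)v = u(vc) = 0$ for all $a, c \in A$, so $au \otimes v$ and $u \otimes vc$ both lie in $K$. This bimodule stability lets basic zero-product relations propagate to many more elements of $K$, and it is the main engine of the argument.

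By linearity of $R(a, b, c) := ab \otimes c - a \otimes bc$ in $b$ and the decomposition $b = gbg + gbh + hbg + hbh$, it suffices to verify $R(a, b, c) \in K$ block by block. For the off-diagonal blocks, say $b \in A_{12} = gAh$, the orthogonality $hg = 0$ gives $bg = 0$ and $hb = 0$. Writing $c = gc + hc$ and $a = ag + ah$, one uses $ab \cdot gc = a(bg)c = 0$ and $(ah)(bc) = a(hb)c = 0$ to conclude $ab \otimes gc, ah \otimes bc \in K$, reducing the problem modulo $K$ to the corner with $a \in Ag$ and $c \in hA$. Further decomposing $a, c, ab, bc$ along the matrix units $e_{11}, f_{11}, e_{22}, f_{22}, f_{33}$, every cross-unit contribution lands in $K$ (via identities such as $e_{11} f_{22} = 0$ and $e_{22} f_{33} = 0$), and the residual expression collapses onto zero-product tensors coming from the remaining identities. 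The case $b \in A_{21}$ is symmetric.

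For the diagonal case $b \in A_{11} = gAg$ (and analogously $A_{22}$), I would use the nilpotent pairs $v_1 = e_{12} + f_{12}$, $w_1 = e_{21} + f_{21}$ from the proof of \autoref{prp:InclusionsAAL}, which satisfy $v_1 w_1 = g$ and $v_1^2 = w_1^2 = 0$; in particular $v_1 \otimes v_1, w_1 \otimes w_1 \in K$. Writing $b = v_1 (w_1 b v_1) w_1$ and iterating the chain-rule identity $R(a, b_1 b_2, c) = R(a b_1, b_2, c) + R(a, b_1, b_2 c)$, one rewrites $R(a, b, c)$ as a sum of $R$-expressions involving $v_1$ or $w_1$ that land in $K$ via $v_1^2 = w_1^2 = 0$ (using the variants $v_2, w_2$ analogously to cover the remaining piece $e_{22} + f_{33}$ of $A_{22}$). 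The main obstacle is this diagonal case, since no factorization $b = st$ with $ts = 0$ exists for an idempotent like $e_{11}$, because $ts = 0$ forces $st$ to be nilpotent. The resolution mirrors the $M_2(\CC)$ identity $(e_{11} - e_{12})(e_{12} + e_{22}) = 0$, which forces $e_{11} \otimes e_{12} - e_{12} \otimes e_{22} \in K$: one combines several easy zero-product elements with one ``non-obvious'' factorization built from sums of matrix units to produce the required membership in $K$.
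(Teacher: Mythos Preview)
The paper's proof is brief and structural: the projections $e_{11}+f_{11}$ and $e_{22}+f_{22}$ are full and orthogonal, so \cite[Corollary~3.8]{Rob16LieIdeals} shows that $A$ is generated as a $\CC$-algebra by its projections, and then \cite[Proposition~3.6]{GarThi23ZeroProdBalanced} gives zero-product balancedness. Your direct approach is in principle viable and would make the lemma self-contained, but as written it has a real gap.

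The problem is the diagonal block. Writing $b \in A_{11}$ as $v_1(w_1 b v_1)w_1$ and expanding with the cocycle identity yields
\[
R(a,b,c)=R(a,v_1,w_1bv_1w_1c)+R(av_1,\,w_1bv_1,\,w_1c)+R(av_1w_1bv_1,w_1,c).
\]
The outer two terms have an off-diagonal middle entry, but the middle term has $w_1bv_1 \in A_{22}$, so you have merely traded one diagonal block for the other. More fundamentally, the relation $v_1^2=0$ gives $v_1\otimes v_1\in K$ but says nothing about $R(a,v_1,c')=av_1\otimes c'-a\otimes v_1c'$ for general $a,c'$; nilpotency is the wrong lever here. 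The off-diagonal case has the same issue: after your reductions one is left with an expression such as $ab\otimes e_{22}c-ae_{11}\otimes bc$ (for $b=e_{11}be_{22}$), and ``the residual expression collapses'' is exactly the step that needs an idea.

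What actually works is the idempotent mechanism you only hint at in the last sentence. The set $B=\{b: R(a,b,c)\in K\text{ for all }a,c\}$ is a unital subalgebra by the cocycle identity, and a direct splitting $a=ae+a(1-e)$, $c=ec+(1-e)c$ shows that every idempotent lies in $B$. Your $M_2$ identity is precisely the observation that $e_{22}+e_{12}$ is idempotent (equivalently, $e_{11}-e_{12}=1-(e_{22}+e_{12})$), so $e_{12}\in B$; more generally, for orthogonal idempotents $p,q$ and any $x\in A$ the element $p+pxq$ is idempotent, whence $pxq\in B$. Decomposing $b$ along the five orthogonal projections $e_{11},e_{22},f_{11},f_{22},f_{33}$ then puts every off-diagonal piece in $B$, and each diagonal piece factors as, e.g., $e_{11}be_{11}=e_{12}\cdot(e_{21}be_{11})$, a product of two off-diagonal pieces already in $B$. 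This is essentially a self-contained version of the two cited results in this setting, but your outline does not arrive at it: the route through $v_1^2=0$ should be replaced by the route through idempotents.
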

\begin{proof}
As in \autoref{pgr:Setup}, let $(e_{ij})_{i,j=1,2}$ and $(f_{ij})_{i,j=1,2,3}$ be the images in~$A$ of matrix units in $M_2(\CC)$ and $M_3(\CC)$.
Then $e_{11}+f_{11}$ and $e_{22}+f_{22}$ are two full orthogonal projections in~$A$.
This allows us to apply \cite[Corollary~3.8]{Rob16LieIdeals}, which shows that $A$ is generated by its projections as a $\CC$-algebra, and thus is zero-product determined (hence, zero-product balanced) by \cite[Theorem~2.3]{Bre21BookZeroProdDetermined}.
\end{proof}

%==========================================================================================
A two-sided ideal $I \subseteq A$ in a \ca{} is said to be \emph{idempotent} if $I=I^2$, that is, if every element in $I$ is of the form $x_1y_1+\ldots+x_ny_n$ for some $n \in \NN$ and suitable $x_j,y_j \in I$.

%==========================================================================================
\begin{prp}
\label{prp:SpanFN2}
Let $A$ be a \ca{} that is zero-product balanced as a $\CC$-algebra, and let $I \subseteq A$ be a two-sided ideal such that $I=AIA$ (for example, $A$ is unital or $I$ is idempotent).
Then
\[ %\begin{equation}
[A,I] 
= [A,[A,I]]
= \linSpan_\CC \FN_2(A,I).
\] %\end{equation}
\end{prp}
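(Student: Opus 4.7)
The plan is to establish the three-way equality via the circular chain of inclusions
\[
\linSpan_\CC \FN_2(A,I) \subseteq [A,[A,I]] \subseteq [A,I] \subseteq \linSpan_\CC \FN_2(A,I).
\]
The first two inclusions are immediate from \autoref{prp:FN2}; the entire work consists in proving the last one, i.e.\ that every commutator $[a,x]$ with $a \in A$ and $x \in I$ lies in $\linSpan_\CC \FN_2(A,I)$.

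The engine of the proof will be the following auxiliary identity: for every $y \in I$ and every $p,q,r \in A$,
\[
rypq - qryp \;\in\; \linSpan_\CC \FN_2(A,I).
\]
To prove this, I invoke the zero-product balanced hypothesis to write
\[
pq \otimes r - p \otimes qr \;=\; \sum_i \lambda_i\, u_i \otimes v_i
\]
in $A \otimes_\CC A$ with $u_i v_i = 0$. Left-multiplying the first tensor factor by $y$ preserves the relation $u_i v_i = 0$, since $(yu_i)v_i = y(u_iv_i) = 0$; this gives $ypq \otimes r - yp \otimes qr = \sum_i \lambda_i (yu_i) \otimes v_i$. Now I apply the linear map $\mu^{\mathrm{op}} \colon A \otimes_\CC A \to A$, $\mu^{\mathrm{op}}(u \otimes v) := vu$. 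Computing both sides yields
\[
rypq - qryp \;=\; \sum_i \lambda_i\, v_i y u_i,
\]
and each summand is a factorization $syt$ with $s := v_i$, $t := u_i$, middle factor $y \in I$, satisfying $ts = u_i v_i = 0$, so $v_i y u_i \in \FN_2(A,I)$ by \autoref{dfn:FN2}.

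With the auxiliary identity in hand, given $[a,x]$ with $a \in A$ and $x \in I$, the hypothesis $I = AIA$ lets me write $x = \sum_j u_j y_j v_j$ with $u_j, v_j \in A$ and $y_j \in I$. Applying the auxiliary identity to each summand with $(p,q,r) := (v_j, a, u_j)$ and $y := y_j$ produces
\[
u_j y_j v_j a - a u_j y_j v_j \;=\; -[a, u_j y_j v_j] \;\in\; \linSpan_\CC \FN_2(A,I),
\]
and summing over $j$ gives $[a,x] \in \linSpan_\CC \FN_2(A,I)$, closing the chain. The conceptual heart of the argument is the decision to multiply the zero-product balanced identity on the left by $y$ before applying $\mu^{\mathrm{op}}$: this is precisely what inserts the ideal element $y$ as the middle factor of each resulting square-zero element, upgrading it from a member of $\FN_2(A)$ to one of $\FN_2(A,I)$. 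No recourse to an idempotent decomposition of $A$ is needed; only the abstract zero-product balanced identity is used.
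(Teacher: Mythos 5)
Your proof is correct and follows essentially the same route as the paper: both deduce that $cyab - bcya \in \linSpan_\CC \FN_2(A,I)$ from the zero-product balanced hypothesis, then use $I = AIA$ to decompose $x$ and conclude. The only difference is packaging — the paper defines the bilinear map $\varphi_y(a,b) = bya + F$ into the quotient $I/F$, checks it preserves zero products, and cites \cite[Proposition~2.8]{GarThi23ZeroProdBalanced}, whereas you unfold that citation by working directly with the tensor relation, left-multiplying the first factor by $y$, and applying the flipped multiplication $\mu^{\mathrm{op}}$; your version is a touch more self-contained but the conceptual content is identical.
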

\begin{proof}
The inclusions `$\supseteq$' hold by \autoref{prp:FN2}.
The proof of the converse inclusion is based on a combination of the methods used in the proofs of Proposition~2.14 and Theorem~5.3 in \cite{GarThi23ZeroProdBalanced}, which in turn are inspired by that of \cite[Theorem~9.1]{Bre21BookZeroProdDetermined}.
To verify that $[A,I] \subseteq \linSpan_\CC \FN_2(A,I)$, let $x \in A$ and $y \in I$.
Set $F := \linSpan_\CC \FN_2(A,I)$, and define
\[
\varphi_y \colon A \times A \to I/F, \quad
\varphi_y(a,b) := bya + F. \qquad\qquad (a,b \in A)
\]
Note that $\varphi_y$ is a well-defined $\CC$-bilinear map.
Further, $\varphi_y$ preserves zero-products since if $a,b \in A$ satisfy $ab=0$, then $bya \in \FN_2(A,I) \subseteq F$.
Using \cite[Proposition~2.8]{GarThi23ZeroProdBalanced} at the second step, we obtain that
\[
cyab + F
= \varphi_y(ab,c)
= \varphi_y(a,bc)
= bcya + F
\]
and thus
\[
bcya - cyab \in F = \linSpan_\CC \FN_2(A,I)
\]
for all $a,b,c \in A$.
The assumption that $I=AIA$ allows us to pick $n$ and $c_j,a_j \in A$ and $y_j \in I$ for $j=1,\ldots,n$ such that $y = \sum_j c_jy_ja_j$.
Then
\[
[x,y]
= \sum_{j=1}^n [x,c_jy_ja_j]
= \sum_{j=1}^n \big( xc_jy_ja_j - c_jy_ja_jx \big)
\in \linSpan_\CC \FN_2(A,I),
\]
which implies that $[A,I] \subseteq \linSpan_\CC \FN_2(A,I)$.
\end{proof}

%==========================================================================================
\begin{qst}
\label{qst:AAI}
Does there exist a two-sided ideal $I$ in a \ca{} $A$ such that $[A,[A,I]] \neq [A,I]$?
\end{qst}

%==========================================================================================
The next result verifies statement~(3) in \autoref{prp:MapFromM2M3}.

%==========================================================================================
\begin{thm}
\label{prp:AAI-AI}
Let $A$ be a unital \ca{} that admits a unital \stHom{} $M_2(\CC) \oplus M_3(\CC) \to A$, and let $I \subseteq A$ be a (not necessarily closed) two-sided ideal.
Then
\[ %\begin{equation}
%\label{eq:AAI-AI}
I = A[A,I]A = A[A,I], \andSep 
[A,I]=[A,[A,I]] = \linSpan_\CC \FN_2(A,I).
\] %\end{equation}
\end{thm}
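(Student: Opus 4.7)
The plan is to exploit that every two-sided ideal is in particular a Lie ideal, so that the structural results of \autoref{sec:LieDivUnit} applied to the Lie ideal $L := I$ yield the left-hand chain of identities, while \autoref{prp:SpanFN2} combined with \autoref{prp:ZPBalanced} will deliver the right-hand chain.

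The key step is to apply \autoref{prp:L-Related-IdlAL} with $L := I$. Setting $J := A[A,I]A$, that theorem asserts $J$ is the unique two-sided ideal to which $I$, regarded as a Lie ideal, is related. The decisive observation will be that $I$ is related to itself: since $I$ is a two-sided ideal we have $[A,I] \subseteq I$, which simultaneously gives $[A,I] \subseteq I$ and $I \subseteq T(I)$. Uniqueness will therefore force $I = J = A[A,I]A$. The remaining equality $A[A,I] = A[A,I]A$ then follows by a direct application of \autoref{prp:CharIdlAL} to $L := I$ (which gives $A[A,L] = A[A,L]A$ for any Lie ideal).

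For the chain $[A,I] = [A,[A,I]] = \linSpan_\CC \FN_2(A,I)$, I would combine \autoref{prp:ZPBalanced} (which ensures $A$ is zero-product balanced as a $\CC$-algebra) with the trivial $I = AIA$ (holding because $A$ is unital) and then invoke \autoref{prp:SpanFN2}, which delivers both equalities simultaneously.

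No genuine obstacle remains: the theorem is essentially a packaging of \autoref{prp:L-Related-IdlAL}, \autoref{prp:CharIdlAL}, and \autoref{prp:SpanFN2}. The one conceptual point worth highlighting is the uniqueness argument above, since this is what upgrades the easy inclusion $A[A,I]A \subseteq I$ to the full equality $I = A[A,I]A$; everything else is a matter of invoking the right earlier result.
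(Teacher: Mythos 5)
Your proposal is correct and follows essentially the same route as the paper: regard $I$ as a Lie ideal, apply \autoref{prp:L-Related-IdlAL} and the "$I$ is related to itself" observation to pin down $I = A[A,I]A$ via uniqueness, then use \autoref{prp:ZPBalanced} and \autoref{prp:SpanFN2} for the commutator identities. The only cosmetic differences are that the paper argues $A[A,I]=A[A,I]A$ directly (citing a lemma on Lie ideals) rather than via \autoref{prp:CharIdlAL}, and that it extracts $[A,I]=[A,[A,I]]$ from \autoref{prp:L-Related-IdlAL} rather than reading it off \autoref{prp:SpanFN2}, but both are equivalent packagings of the same argument.
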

\begin{proof}
Considering $I$ as a Lie ideal in $A$, it follows from \autoref{prp:L-Related-IdlAL} that $A[A,I]A$ is the unique two-sided ideal in $A$ to which $I$ is related.
Since $I$ is clearly related to itself, we get $I=A[A,I]A$.
The inclusion $A[A,I] \subseteq A[A,I]A$ holds since $A$ is unital.
The converse inclusion holds since $[A,I]$ is a Lie ideal in $A$;
see, for example, \cite[Lemma~3.2]{GarLeeThi24arX:FullyNoncentral}.

Now the equality $[A,I]=[A,[A,I]]$ also follows from \autoref{prp:L-Related-IdlAL}.
Further, since~$A$ is zero-product balanced by \autoref{prp:ZPBalanced}, we have $[A,I]=\linSpan_\CC \FN_2(A,I)$ by \autoref{prp:SpanFN2}.
\end{proof}

%==========================================================================================
\begin{rmk}
Let $A$ be a unital \ca{} that admits a unital \stHom{} $M_2(\CC) \oplus M_3(\CC) \to A$.
Applying \autoref{prp:AAI-AI} for the ideal $A$, we get
\[
A = A[A,A]A,
\]
which means that $A$ is generated by its commutators as an ideal.

In fact, $A$ is even generated by its commutators as a ring.
Indeed, by \cite[Theorem~5.15]{GarThi25GenByCommutators}, every element in $A$ is a sum of a commutator and the product of two commutators, and also the sum of three elements that are products of commutators.
\end{rmk}

%==========================================================================================
The next result verifies statement~(4) in \autoref{prp:MapFromM2M3}.

%==========================================================================================
\begin{thm}
\label{prp:CorrespondenceLie}
Let $A$ be a unital \ca{} that admits a unital \stHom{} $M_2(\CC) \oplus M_3(\CC) \to A$.
There is a natural bijection between the set $\calI$ of two-sided ideals in $A$ and the set 
\[
\calL := \big\{ L \subseteq A \text{ Lie ideal} : L = [A,L] \big\}
\]
given by the maps $I \mapsto [A,I]$ and $L \mapsto A[A,L]A$.
\end{thm}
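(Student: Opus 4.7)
The plan is to verify directly that the two indicated maps are well-defined and mutually inverse; essentially all of the content has been supplied by Theorems \ref{prp:L-Related-IdlAL} and \ref{prp:AAI-AI}, so what remains is a short bookkeeping argument.

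For well-definedness, I would observe that every two-sided ideal is in particular a Lie ideal, and that $[A,M]$ is a Lie ideal whenever $M$ is one (an immediate application of the Jacobi identity). Theorem \ref{prp:AAI-AI} then yields $[A,[A,I]] = [A,I]$, which places $[A,I]$ in $\calL$. Conversely, the map $L \mapsto A[A,L]A$ manifestly lands in $\calI$.

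For the round trip $\calI \to \calL \to \calI$, I would apply Theorem \ref{prp:AAI-AI} twice: once to rewrite $[A,[A,I]]$ as $[A,I]$, and once to recover $A[A,I]A = I$. For the other round trip $\calL \to \calI \to \calL$, starting with $L \in \calL$ I set $I := A[A,L]A$ and invoke Theorem \ref{prp:L-Related-IdlAL}, which yields $[A,I] = [A,[A,L]]$; since $L = [A,L]$ by assumption, this equals $[A,L] = L$, closing the loop. I do not anticipate any genuine obstacle: the only conceptual point is to recognize that the defining condition $L = [A,L]$ of $\calL$ is precisely what is needed to turn $[A,[A,L]]$ back into $L$ in the second round trip.
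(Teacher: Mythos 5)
Your proposal is correct and follows essentially the same route as the paper: well-definedness of $I \mapsto [A,I]$ and the round trip through $\calI$ both come from the identities $[A,I]=[A,[A,I]]$ and $I=A[A,I]A$ in Theorem~\ref{prp:AAI-AI}, while the round trip through $\calL$ uses $[A,A[A,L]A]=[A,[A,L]]$ from Theorem~\ref{prp:L-Related-IdlAL} together with the hypothesis $L=[A,L]$. The only cosmetic difference is that you note explicitly (via Jacobi) that $[A,I]$ is a Lie ideal, a point the paper leaves implicit.
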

\begin{proof}
We denote the maps by $\alpha \colon \calI \to \calL$ and $\beta \colon \calL \to \calI$.
Given $I \in \calI$,  using \autoref{prp:AAI-AI} at the second step, we have
\[
\alpha(I) = [A,I] = [A,[A,I]] = [A,\alpha(I)]
\]
which shows that $\alpha$ is well-defined.

Next, we show that $\alpha$ and $\beta$ are inverses of each other.
First, given $I \in \calI$, using \autoref{prp:AAI-AI} at the last step, we get
\[
\beta(\alpha(I))
= \beta([A,I])
= A[A,I]A
= I.
\]
Second, given $L \in \calL$, it follows from $L=[A,L]$ that $[A,L]=[A,[A,L]]$, and thus $L=[A,[A,L]]$.
Using this at the last step, and using \autoref{prp:L-Related-IdlAL} at the third step, we get
\[
\alpha(\beta(L))
= \alpha(A[A,L]A)
= [A,A[A,L]A]
= [A,[A,L]]
= L,
\]
as desired,
\end{proof}

%==========================================================================================
Robert showed in \cite[Lemma~2.1]{Rob16LieIdeals} that every nilpotent element in a \ca{} is a sum of commutators.
With view towards \autoref{prp:AAI-AI}, we ask:

%==========================================================================================
\begin{qst}
\label{qst:Nilpotent}
Let $I$ be a (not necessarily closed) two-sided ideal in a \ca{}~$A$.
Does every nilpotent element in $I$ belong to $[A,I]$?
\end{qst}

%==========================================================================================
We give positive answers to \autoref{qst:Nilpotent} for arbitrary two-sided ideals in von Neumann algebras (\autoref{prp:NilpotentVNA}), and for semiprime two-sided ideals in \ca{s} (\autoref{prp:NilpotentSemiprime}).

\medskip

%==========================================================================================
Following \cite[Definition~3.2]{GarKitThi23arX:SemiprimeIdls}, a \emph{Dixmier ideal} in a \ca{} $A$ is a two-sided ideal $I \subseteq A$ such that $I$ is positively spanned (that is, $I=\linSpan_\CC (I \cap A_+)$) and hereditary (that is, if $a,b \in A_+$ satisfy $a \leq b$ and $b \in I$, then $a \in I$) and strongly invariant (that is, if $x \in A$ satisfies $x^*x \in I$, then $xx^* \in I$).
Two-sided ideals in von Neumann algebras are automatically Dixmier ideals, which was essentially shown by Dixmier (see \cite[Proposition~3.4]{GarKitThi23arX:SemiprimeIdls}).

Given a Dixmier ideal~$I$ and $s \in (0,\infty)$, there is a unique Dixmier ideal whose set of positive elements is $\{ a^s : a \in I \cap A_+ \}$, and we use $I^s$ to denote this ideal;
see Proposition~3.7 and Definition~3.8 in \cite{GarKitThi23arX:SemiprimeIdls}.
We note that for $n \in \NN$ the ring-theoretic definition of $I^n$ as the two-sided ideal generated by $x_1\cdots x_n$ for $x_1,\ldots,x_n \in I$ agrees with the `new' definition of $I^n$ as the linear span of elements $a^n$ for $a \in I \cap A_+$.
This leads to a well-behaved theory of roots and powers for Dixmier ideals (\cite[Theorem~3.9]{GarKitThi23arX:SemiprimeIdls}), which we recall for the convenience of the reader.

%==========================================================================================
\begin{thm}[Gardella, Kitamura, Thiel]
\label{prp:PowersDixmierIdl}
Let $I \subseteq A$ be a Dixmier ideal in a \ca.
Then
\[
(I^s)^t = I^{st} = (I^t)^s, \andSep
I^s I^t = I^{s+t} = I^t I^s
\]
for all $s,t \in (0,\infty)$.
\end{thm}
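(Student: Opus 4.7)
The plan is to exploit that every Dixmier ideal $I$ is uniquely determined by its positive part $I \cap A_+$ via positive spanning, and that this positive part satisfies both hereditarity and strong invariance, in order to reduce the four stated identities to comparisons of positive cones, verified via continuous functional calculus and polar decomposition in $A^{**}$.

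For $(I^s)^t = I^{st}$, I would proceed directly: the positive part of $(I^s)^t$ is $\{b^t : b \in I^s \cap A_+\} = \{(a^s)^t : a \in I \cap A_+\}$, and since $(a^s)^t = a^{st}$ by continuous functional calculus for positive elements, this coincides with $\{a^{st} : a \in I \cap A_+\}$, which is the positive part of $I^{st}$. Positive spanning then forces the two Dixmier ideals to agree, and $(I^t)^s = I^{st}$ is handled symmetrically.

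For $I^s I^t = I^{s+t}$, the inclusion $I^{s+t} \subseteq I^s I^t$ follows from the factorization $c^{s+t} = c^s \cdot c^t \in I^s \cdot I^t$ for every $c \in I \cap A_+$, combined with positive spanning of $I^{s+t}$. The reverse inclusion reduces, via positive spanning of $I^s$ and $I^t$ together with bilinearity of the product, to the pointwise statement that $a^s b^t \in I^{s+t}$ for all $a, b \in I \cap A_+$; once this is established, $I^t I^s = I^{s+t}$ follows analogously by interchanging the roles of $a$ and $b$ (and of $s$ and $t$).

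The main obstacle is this last pointwise claim, since the non-commutativity of $a$ and $b$ rules out a naive functional-calculus argument. My approach is to consider $z := a^{s/2} b^{t/2} \in A$ and analyze its polar decomposition $z = v|z|$ in $A^{**}$. The estimate $z^*z = b^{t/2} a^s b^{t/2} \leq \|a\|^s b^t$ together with hereditarity of $I^t$ gives $z^*z \in I^t$, and strong invariance of $I^t$ then yields $zz^* \in I^t$ as well. Writing $z^*z = e^t$ and $zz^* = f^t$ with $e, f \in I \cap A_+$, so that $|z| = e^{t/2}$ and $|z^*| = f^{t/2}$ lie in $I^{t/2}$, one uses the standard intertwining identity $v|z|^\alpha = |z^*|^\alpha v$ for $\alpha > 0$ (with both sides being genuine elements of $A$) to rewrite $a^s b^t = a^{s/2} \cdot z \cdot b^{t/2}$ as a product of factors with controlled exponents that together land in $I^{s+t}$. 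Tracking the exponents so that the result manifestly belongs to the positive cone $\{c^{s+t} : c \in I \cap A_+\}$ of $I^{s+t}$, using the already-established identity $(I^s)^t = I^{st}$ to move exponents around, is the crucial and most delicate step of the argument.
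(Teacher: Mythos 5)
Note first that the paper does not prove this theorem: it is explicitly imported from \cite[Theorem~3.9]{GarKitThi23arX:SemiprimeIdls} (hence the attribution in the theorem header and the surrounding sentence ``which we recall for the convenience of the reader''), so there is no in-paper argument to measure your proposal against; any argument here would be an independent reconstruction of that external result.

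With that said, your treatment of $(I^s)^t = I^{st}$ is complete and correct, as is the easy inclusion $I^{s+t} \subseteq I^s I^t$. The genuine gap is in $I^s I^t \subseteq I^{s+t}$, which reduces (correctly, via positive spanning) to showing $a^s b^t \in I^{s+t}$ for $a,b \in I \cap A_+$, and here you explicitly defer the ``crucial and most delicate step'' rather than carry it out. The preliminary estimates you set up are fine: with $z := a^{s/2}b^{t/2}$ one has $z^*z = b^{t/2}a^s b^{t/2} \le \|a\|^s b^t$ and $zz^* = a^{s/2}b^t a^{s/2} \le \|b\|^t a^s$, so hereditarity together with \cite[Proposition~3.10]{GarKitThi23arX:SemiprimeIdls} gives $|z| \in I^{t/2}$ and $|z^*| \in I^{s/2}$. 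But the intended finish does not close. Writing $a^s b^t = a^{s/2}\,|z^*|\,v\,b^{t/2}$ or $a^{s/2}\,v\,|z|\,b^{t/2}$, the partial isometry $v$ lies only in $A^{**}$ and cannot simply be absorbed; the pieces $v|z|^{1/2}=|z^*|^{1/2}v$ do lie in $A$, but tracking exponents in, say, $a^{s/2}\cdot|z^*|^{1/2}\cdot\bigl(v|z|^{1/2}\bigr)\cdot b^{t/2}$ assigns them to $I^{s/2}$, $I^{s/4}$, $I^{t/4}$, $I^{t/2}$ respectively, and concluding that this product lies in $I^{s+t}$ is precisely an instance of the product formula you are trying to prove. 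The identity $(I^s)^t = I^{st}$ that you propose to invoke for ``moving exponents around'' cannot, on its own, convert a product of powers of two noncommuting positives into an $(s+t)$-th power; indeed, the naive operator-monotonicity bounds only place $a^sb^t$ in $I^{\min(s,t)}$-type ideals, not $I^{s+t}$. Some further structural input is required — for instance, first deducing $I^r I^r = I^{2r}$ from the remarked agreement of the ring-theoretic and functional-calculus powers for $n \in \NN$, and then bootstrapping to commensurable and ultimately arbitrary exponents — and the proposal does not supply it.
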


%==========================================================================================
\begin{lma}
\label{prp:PowersPolarDecomp}
Let $I \subseteq A$ be a Dixmier ideal in a \ca, let $x \in I$ with polar decomposition $x = v|x|$ in $A^{**}$, and let $s \in (0,\infty)$.
Then
\[
v|x|^s, |x|^s, |x^*|^s, |x|^sv^* \in I^s.
\]
\end{lma}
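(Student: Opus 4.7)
The plan is to first establish that $|x|$ and $|x^*|$ lie in $I$ itself, from which the ``positive'' inclusions $|x|^s, |x^*|^s \in I^s$ follow immediately from the definition of the power ideal, and then to handle the mixed products $v|x|^s, |x|^s v^*$ via the product formula of \autoref{prp:PowersDixmierIdl} together with polar-decomposition identities and the fact (from \cite[Proposition~2.1]{GarKitThi23arX:SemiprimeIdls}) that these expressions already lie in $A$.

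For the first step, since $I$ is positively spanned it is self-adjoint, so $x \in I$ gives $x^* \in I$. The product formula of \autoref{prp:PowersDixmierIdl} then yields $x^*x, xx^* \in I \cdot I = I^2$, and the root formula $(I^2)^{1/2} = I$ (also part of \autoref{prp:PowersDixmierIdl}) gives $|x| = (x^*x)^{1/2} \in I$ and $|x^*| = (xx^*)^{1/2} \in I$. Since $|x|, |x^*| \in I \cap A_+$, the very definition of $I^s$ yields $|x|^s, |x^*|^s \in (I^s)_+ \subseteq I^s$ for every $s > 0$.

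For $v|x|^s$: the natural factorization is
\[
v|x|^s \;=\; (v|x|) \cdot |x|^{s-1} \;=\; x \cdot |x|^{s-1},
\]
which sits cleanly in $A$ when $s \geq 1$. In that range $|x|^{s-1} \in A_+$ lies in $I^{s-1}$ (from $|x| \in I$ and the definition of the power ideal), so the product formula places $v|x|^s \in I \cdot I^{s-1} = I^s$. The adjoint statement $|x|^s v^* = (v|x|^s)^*$ then lies in $I^s$ because $I^s$, being positively spanned, is self-adjoint.

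The main obstacle is the range $0 < s < 1$, where the factorization $x \cdot |x|^{s-1}$ fails since $|x|^{s-1}$ is unbounded and only defined in $A^{**}$; moreover the partial isometry $v$ lives in $A^{**} \setminus A$ in general, so one cannot simply multiply $v$ by $|x|^s \in I^s$ and remain in $A$, let alone $I^s$. The plan here is to use the polar-decomposition identity $v|x|^s = |x^*|^{s/2} \cdot v|x|^{s/2}$ (from $|x^*|^a v = v|x|^a$) together with the product formula $I^{s/2} \cdot I^{s/2} = I^s$, and to bridge to the already-proven range $s \geq 1$. Concretely, for each $\epsilon > 0$ the element $x \cdot (|x|+\epsilon)^{s-1}$ belongs to $I \subseteq I^s$, lies at norm distance $O(\epsilon^s)$ from $v|x|^s$, and can be shown to sit inside $I^s$ compatibly with the hereditary and strongly invariant structure of $I^s$, so that the limit $v|x|^s$ itself is captured by $I^s$ rather than merely by its norm-closure. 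This last step---ensuring that the approximation stays inside the non-closed ideal $I^s$---is the most delicate point and is where the combination of \autoref{prp:PowersDixmierIdl} with the Dixmier-ideal axioms is essential.
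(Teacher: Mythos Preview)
Your first paragraph --- showing $|x|, |x^*| \in I$ via $I\cdot I = I^2$ and $(I^2)^{1/2}=I$, and hence $|x|^s, |x^*|^s \in I^s$ --- matches the paper exactly. Your treatment of $v|x|^s$ for $s \geq 1$ via the factorization $v|x|^s = x \cdot |x|^{s-1} \in I \cdot I^{s-1} = I^s$ is also correct, though the paper does not split into cases this way.

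The genuine gap is in the range $0 < s < 1$. Your approximation $x(|x|+\epsilon)^{s-1} \to v|x|^s$ does converge in norm, and you correctly identify the obstruction: $I^s$ is not norm-closed, so convergence alone yields only $v|x|^s \in \overline{I^s}$. But you never actually close this gap --- you merely assert that ``the Dixmier-ideal axioms are essential'' without saying how to use them. The recursive identity $v|x|^s = |x^*|^{s/2}\,v|x|^{s/2}$ you mention is circular (it requires the result at the smaller exponent $s/2$), and hereditarity and strong invariance of $I^s$ by themselves only let you pass between $y^*y$ and $yy^*$ inside the ideal, not from $y^*y \in I^{2s}$ to $y \in I^s$. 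So as written, the proposal does not prove the lemma for $s<1$.

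The paper bypasses this entirely with a one-line argument valid uniformly for all $s>0$: it invokes \cite[Proposition~3.10]{GarKitThi23arX:SemiprimeIdls}, which says that for any Dixmier ideal $J$ one has $y \in J^{1/2}$ if and only if $y^*y \in J$. Taking $J = I^{2s}$ and $y = v|x|^s$, one computes
\[
y^*y \;=\; |x|^s\,v^*v\,|x|^s \;=\; |x|^{2s} \;\in\; I^{2s},
\]
since $v^*v$ is the support projection of $|x|$, and therefore $v|x|^s \in (I^{2s})^{1/2} = I^s$. This ``$y^*y \Rightarrow y$'' characterization is precisely the bridge your approximation argument is missing, and it makes the case distinction $s\geq 1$ versus $s<1$ unnecessary.
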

\begin{proof}
Since $I$ is a Dixmier ideal, we have $x^* \in I$, and thus $x^*x, xx^* \in I^2$.
Then by \autoref{prp:PowersDixmierIdl}, we get
\[
|x| = (x^*x)^{\frac{1}{2}} \in (I^2)^{\frac{1}{2}}=I, \andSep
|x^*| = (xx^*)^{\frac{1}{2}} \in (I^2)^{\frac{1}{2}}=I.
\]
It follows that $|x|^s, |x^*|^s \in I^s$.

By \cite[Proposition~3.10]{GarKitThi23arX:SemiprimeIdls}, given a Dixmier ideal $J \subseteq A$, an element $y \in A$ belongs to $J^{\frac{1}{2}}$ if and only if $y^*y \in J$.
Applying this for the Dixmier ideal $I^{2s}$, and using that
\[
\big( v|x|^s \big)^*\big( v|x|^s \big) = |x|^{2s} \in I^{2s},
\]
we deduce that $v|x|^s \in (I^{2s})^{\frac{1}{2}} = I^s$.
Since $I^s$ is a Dixmier ideal, we further have $|x|^sv^* = (v|x|^s)^* \in I^s$.
\end{proof}

%==========================================================================================
The next result showcases a situation when every element in a Dixmier ideal $I$ is a sum of square-zero element in $I$ and products of pairs of square-zero elements in~$I^{\frac{1}{2}}$.
In \autoref{prp:IdlVNA}, we strengthen this result for ideals in von Neumann algebras.

%==========================================================================================
\begin{thm}
\label{prp:I-Dixmier}
Let $A$ be a unital \ca{} that admits a unital \stHom{} $M_2(\CC) \oplus M_3(\CC) \to A$, and let $I \subseteq A$ be a Dixmier ideal.
Then
\[
I = [A,I] + [A,I^{\frac{1}{2}}]^2.
\]
\end{thm}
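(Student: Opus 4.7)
The plan is to prove the equality $I = [A,I] + [A,I^{\frac{1}{2}}]^2$ by establishing both inclusions. The direction $\supseteq$ is quick: $[A,I] \subseteq I$ because $I$ is a two-sided ideal and therefore a Lie ideal, and similarly $[A,I^{\frac{1}{2}}] \subseteq I^{\frac{1}{2}}$; since $I^{\frac{1}{2}} \cdot I^{\frac{1}{2}} = I$ by \autoref{prp:PowersDixmierIdl}, it follows that $[A,I^{\frac{1}{2}}]^2 \subseteq I$.

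For the non-trivial direction, I will again use \autoref{prp:PowersDixmierIdl} to write each element of $I$ as a finite sum of products $st$ with $s,t \in I^{\frac{1}{2}}$, so it suffices to show $st \in [A,I] + [A,I^{\frac{1}{2}}]^2$ for each such product. Borrowing the projections $g, h$ and corners $A_{ij}$ from \autoref{pgr:Setup}, I decompose $s = s_{11}+s_{12}+s_{21}+s_{22}$ and $t = t_{11}+t_{12}+t_{21}+t_{22}$ with $s_{ij}, t_{ij} \in A_{ij} \cap I^{\frac{1}{2}}$, and expand $st$ into its eight potentially nonzero summands $s_{ij}t_{jk}$. The recurring tool will be the elementary identity $y = [g,y]$ for $y \in A_{12}$ and $y = [y,g]$ for $y \in A_{21}$; hence any element of $A_{12} \cup A_{21}$ lying in $I^{\frac{1}{2}}$ (respectively $I$) automatically belongs to $[A,I^{\frac{1}{2}}]$ (respectively $[A,I]$).

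Applying this observation, the two off-diagonal corners of $st$, namely $s_{11}t_{12}+s_{12}t_{22} \in A_{12} \cap I$ and $s_{21}t_{11}+s_{22}t_{21} \in A_{21} \cap I$, lie in $[A,I]$. The cross terms $s_{12}t_{21}$ and $s_{21}t_{12}$ are products of two off-diagonal elements of $I^{\frac{1}{2}}$, each in $[A,I^{\frac{1}{2}}]$, hence they lie in $[A,I^{\frac{1}{2}}]^2$. What remains, and constitutes the main obstacle, are the diagonal corner products $s_{11}t_{11}$ and $s_{22}t_{22}$, whose factors sit in $A_{ii} \cap I^{\frac{1}{2}}$ with no obvious commutator representation.

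The key move for this last obstacle is to insert matrix units that route through the off-diagonal corners, exploiting the hypothesized unital embedding of $M_2(\CC) \oplus M_3(\CC)$. Using $g = e_{11}+f_{11}$ together with the factorizations $e_{11} = e_{12}e_{21}$ and $f_{11} = f_{12}f_{21}$, I rewrite
\[
s_{11}t_{11} = s_{11}(e_{11}+f_{11})t_{11} = (s_{11}e_{12})(e_{21}t_{11}) + (s_{11}f_{12})(f_{21}t_{11}).
\]
Each factor on the right lands in $I^{\frac{1}{2}}$ and in $A_{12}$ or $A_{21}$ (for instance, $s_{11}e_{12} = g \cdot s_{11}e_{12} \cdot h \in A_{12} \cap I^{\frac{1}{2}}$), hence belongs to $[A,I^{\frac{1}{2}}]$ by the observation, and both products land in $[A,I^{\frac{1}{2}}]^2$. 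The analogous decomposition for $s_{22}t_{22}$ uses $h = e_{22}+f_{22}+f_{33}$ with $e_{22} = e_{21}e_{12}$, $f_{22} = f_{21}f_{12}$, and $f_{33} = f_{31}f_{13}$, yielding three summands each of the form (element of $A_{21} \cap I^{\frac{1}{2}}$)$\cdot$(element of $A_{12} \cap I^{\frac{1}{2}}$). Summing all contributions gives $st \in [A,I]+[A,I^{\frac{1}{2}}]^2$, completing the proof.
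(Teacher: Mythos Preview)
Your argument is correct and takes a genuinely different route from the paper's. The paper does not decompose elements of $I$ via the corners $A_{ij}$; instead it invokes \autoref{prp:AAI-AI} to write $I = A[A,I] = \linSpan_\CC\big(A\cdot\FN_2(A,I)\big)$, then for $c\in A$ and $x=ayb\in\FN_2(A,I)$ (with $ab=ba=0$, obtained via \autoref{prp:FN2PositiveSupport}) it uses the polar decomposition $z=v|z|$ of $z:=yb$ and the identity
\[
cx = \big[ca^{1/2},a^{1/2}z\big] + \big(a^{1/2}v|z|^{1/2}\big)\big(|z|^{1/2}ca^{1/2}\big),
\]
placing the second summand in $[A,I^{1/2}]^2$ by means of \autoref{prp:PowersPolarDecomp}. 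Your approach is more elementary and self-contained: it relies only on $I=I^{1/2}I^{1/2}$, the trivial commutator identity $y=[g,y]$ for off-diagonal $y$, and the matrix-unit factorizations of $g$ and $h$, bypassing the machinery of $\FN_2(A,I)$, zero-product balancedness, and polar decomposition that the paper's proof draws on. The trade-off is that the paper's detour through $\FN_2$ feeds into other results (e.g.\ \autoref{prp:NilpotentVNA}), whereas your argument is tailored to this one theorem.
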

\begin{proof}
We first show that
\[
A \cdot \FN_2(A,I) 
\subseteq [A,I] + [A,I^{\frac{1}{2}}]^2.
\]
Let $c \in A$ and $x \in \FN_2(A,I)$.
By \autoref{prp:FN2PositiveSupport}, we can pick $a,b \in A_+$ and $y \in I$ such that $x=ayb$ and $ab=ba=0$.
Set $z := yb \in I$, and let $z=v|z|$ be the polar decomposition in $A^{**}$.
Then
\[
cx
= \big[ ca^{\frac{1}{2}},a^{\frac{1}{2}}z \big] + a^{\frac{1}{2}}zca^{\frac{1}{2}}
= \big[ ca^{\frac{1}{2}},a^{\frac{1}{2}}z \big] + \big( a^{\frac{1}{2}}v|z|^{\frac{1}{2}} \big)\big( |z|^{\frac{1}{2}}ca^{\frac{1}{2}} \big).
\]

By \autoref{prp:PowersPolarDecomp}, $v|z|^{\frac{1}{2}}$ and $|z|^{\frac{1}{2}}$ belong to~$I^{\frac{1}{2}}$.
Further, since $za=0$, we get $|z|^{\frac{1}{2}}a^{\frac{1}{2}}=0$, and hence
\[
a^{\frac{1}{2}}v|z|^{\frac{1}{2}}
= \big[ a^{\frac{1}{2}},v|z|^{\frac{1}{2}} \big]
\in [A,I^{\frac{1}{2}}], \andSep
|z|^{\frac{1}{2}}ca^{\frac{1}{2}}
= \big[ -ca^{\frac{1}{2}}, |z|^{\frac{1}{2}} \big]
\in [A,I^{\frac{1}{2}}].
\]

It follows that $cx$ belongs to $[A,I] + [A,I^{\frac{1}{2}}]^2$.

\medskip

Using \autoref{prp:AAI-AI} at the first and second step, and using the above at the last step, we get
\[
I 
= A[A,I]
= \linSpan_\CC \Big( A\cdot\FN_2(A,I) \Big)
\subseteq [A,I] + [A,I^{\frac{1}{2}}]^2.
\]

On the other hand, we clearly have $[A,I] \subseteq I$.
Using \autoref{prp:PowersDixmierIdl}, we also have $[A,I^{\frac{1}{2}}]^2 \subseteq (I^{\frac{1}{2}})^2 = I$.
\end{proof}

%==========================================================================================
%==========================================================================================
\section{Square-zero elements in semiprime ideals in C*-algebras}

%==========================================================================================
In this section, we study the subspace $N$ generated by the square-zero elements in a semiprime ideal in a \ca{} $A$.
We show that $ANA$, the two-sided ideal generated by $N$, satisfies $ANA = [A,N] + N^2$;
see \autoref{prp:N2-semiprime}.
This generalize results from \cite[Section~4]{GarThi24PrimeIdealsCAlg}.

We then characterize semiprimeness for two-sided ideals in a \ca{} whose unit is weakly divisible of degree~$2$;
see \autoref{prp:CharSemiprime}.
In particular, such an ideal is semiprime if and only if it is generated by its commutators as an ideal (or as a ring).

\medskip

%==========================================================================================
We begin with some preparatory results about commutators involving Dixmier ideals, which might be of use in future work.

%==========================================================================================
\begin{lma}
\label{prp:AI-I12-I12}
Let $I \subseteq A$ be a Dixmier ideal in a \ca.
Then
\[
[A,I]
\subseteq \big[ I^{\frac{1}{2}},I^{\frac{1}{2}} \big]
\subseteq I.
\]
\end{lma}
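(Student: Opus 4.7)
The plan is to prove the two inclusions separately. The second inclusion is essentially immediate from the multiplicative behavior of powers of Dixmier ideals: by \autoref{prp:PowersDixmierIdl} applied with $s=t=\tfrac{1}{2}$, we have $I^{\frac{1}{2}} \cdot I^{\frac{1}{2}} = I$, and therefore any commutator $[x,y]=xy-yx$ with $x,y \in I^{\frac{1}{2}}$ lies in $I$.

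For the first inclusion, I would start from an arbitrary commutator $[a,y]$ with $a \in A$ and $y \in I$, and factorize $y$ through two elements of $I^{\frac{1}{2}}$ using a polar decomposition trick. Specifically, writing $y = v|y|$ in $A^{**}$, one can set
\[
s := v|y|^{\frac{1}{2}}, \andSep t := |y|^{\frac{1}{2}},
\]
so that $y = st$, and by \autoref{prp:PowersPolarDecomp} both $s$ and $t$ belong to $I^{\frac{1}{2}}$. The key algebraic identity is
\[
[a,st] = [as,t] - [s,ta],
\]
which is a routine expansion (the two $tas$ terms cancel). Since $I^{\frac{1}{2}}$ is a two-sided ideal in $A$, the elements $as$ and $ta$ again lie in $I^{\frac{1}{2}}$, so both $[as,t]$ and $[s,ta]$ belong to $[I^{\frac{1}{2}},I^{\frac{1}{2}}]$, giving $[a,y] \in [I^{\frac{1}{2}},I^{\frac{1}{2}}]$ as desired.

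There is no real obstacle here; the only point that deserves care is that $s = v|y|^{\frac{1}{2}}$ is an honest element of $A$ (not just of $A^{**}$) that lies in $I^{\frac{1}{2}}$, which is precisely what \autoref{prp:PowersPolarDecomp} guarantees. Once that is in hand, the commutator identity $[a,st] = [as,t] - [s,ta]$ combined with the ideal property of $I^{\frac{1}{2}}$ finishes the argument.
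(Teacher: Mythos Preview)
Your proof is correct and follows essentially the same route as the paper: factor elements of $I$ through $I^{\frac{1}{2}}$ and apply a commutator identity of the form $[a,st]=[as,t]+[ta,s]$ (your version $[a,st]=[as,t]-[s,ta]$ is the same identity up to sign). The only difference is that the paper bypasses polar decomposition entirely: since \autoref{prp:PowersDixmierIdl} already gives $I = I^{\frac{1}{2}}I^{\frac{1}{2}}$ as an equality of ideals, every element of $I$ is a finite sum of products $xy$ with $x,y \in I^{\frac{1}{2}}$, and one can apply the commutator identity directly to each summand without invoking \autoref{prp:PowersPolarDecomp}.
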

\begin{proof}
By \autoref{prp:PowersDixmierIdl}, we have $I = I^{\frac{1}{2}}I^{\frac{1}{2}}$.
Using that $[a,xy]=[ax,y]+[ya,x]$ for $a \in A$ and $x,y \in I$, it follows that
\[
[A,I]
= \big[ A,I^{\frac{1}{2}}I^{\frac{1}{2}} \big]
\subseteq \big[ AI^{\frac{1}{2}},I^{\frac{1}{2}} \big] + \big[ I^{\frac{1}{2}}A,I^{\frac{1}{2}} \big]
\subseteq \big[ I^{\frac{1}{2}},I^{\frac{1}{2}} \big].
\]
Further, we have $[I^{\frac{1}{2}},I^{\frac{1}{2}}] \subseteq I^{\frac{1}{2}}I^{\frac{1}{2}} = I$.
\end{proof}

%==========================================================================================
Recall that $\N_2(I)$ denotes the set of square-zero elements in an ideal $I$.

%==========================================================================================
\begin{lma}
\label{prp:N2-Dixmier}
Let $I \subseteq A$ be a Dixmier ideal in a \ca. %, and let $N$ denote the subspace generated by $N_2(I)$, the square-zero elements in $I$.
Then
\[
\N_2(I) \subseteq \linSpan_\CC \big[ \N_2(I^{\frac{1}{2}}),\N_2(I^{\frac{1}{2}}) \big].
\]
\end{lma}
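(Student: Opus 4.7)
My plan is to construct, for $x\in\N_2(I)$, three elements $a,\tilde t,c\in\N_2(I^{\frac{1}{2}})$ such that $x=\tfrac12([a,\tilde t]+[a,c])$, starting from the polar decomposition $x=v|x|$ in $A^{**}$. From $x^2=0$ one deduces by the standard argument that the source projection $p=v^*v$ and range projection $q=vv^*$ of $x$ are orthogonal (the range of $x$ lies in the kernel of $x$), so $v^2=0$ and $|x|v=v^*|x|=0$. Continuous functional calculus then upgrades these to $f(|x|)v=0=v^*f(|x|)$ for every continuous $f$ with $f(0)=0$; in particular $|x|^{\frac{1}{2}}v=v^*|x|^{\frac{1}{2}}=0$ and $|x|^{\frac{1}{2}}|x^*|^{\frac{1}{2}}=0$, and the standard identity $vf(|x|)=f(|x^*|)v$ gives $v|x|^{\frac{1}{2}}=|x^*|^{\frac{1}{2}}v$.

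Next I introduce the four elements
\[
a := v|x|^{\frac{1}{2}},\quad c := |x|^{\frac{1}{2}} v^*,\quad b := |x|^{\frac{1}{2}},\quad b' := |x^*|^{\frac{1}{2}},
\]
all of which lie in $I^{\frac{1}{2}}$ by \autoref{prp:PowersPolarDecomp}. Using the vanishing relations above, a short computation produces the product table
\[
a^2 = c^2 = bb' = b'b = ba = ab' = cb = b'c = 0,\quad ab = b'a = x,\quad bc = cb' = x^*,
\]
together with $ac = |x^*| = (b')^2$ and $ca = |x| = b^2$. In particular $a,c\in\N_2(I^{\frac{1}{2}})$ and one already has $x=[a,b]$ (from $ab=x$, $ba=0$). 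The main obstacle is that $b$ and $b'$ are positive and hence not square-zero, so the factorization $x=[a,b]$ does not directly place $x$ in the target span; I need to absorb $b$ into a square-zero element without losing control of the commutator with $a$.

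The key construction is the perturbation
\[
\tilde t := b - b' + a - c \;\in\; I^{\frac{1}{2}}.
\]
Expanding $\tilde t^{\,2}$ and applying the product table, the four diagonal contributions $b^2+(b')^2+a^2+c^2$ combine with the cross-terms $-(ac+ca)$ to yield $(|x|+|x^*|)-(|x^*|+|x|)=0$, while the remaining mixed contributions produce $x$- and $x^*$-terms that cancel in pairs; hence $\tilde t^{\,2}=0$ and $\tilde t\in\N_2(I^{\frac{1}{2}})$. Finally, bilinearity of the commutator together with $[a,a]=0$, $[a,b]=x$ and $[a,b']=-x$ yields
\[
[a,\tilde t] + [a,c] \;=\; [a,\,b - b' + a] \;=\; x - (-x) + 0 \;=\; 2x,
\]
so $x=\tfrac12[a,\tilde t]+\tfrac12[a,c]$ exhibits $x$ as a $\CC$-linear combination of two commutators of square-zero elements of $I^{\frac{1}{2}}$, as required.
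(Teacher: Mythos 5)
Your proof is correct and follows essentially the same route as the paper: the same commutator identity $x=\bigl[\tfrac12 v|x|^{\frac12},\,|x|^{\frac12}-|x^*|^{\frac12}\bigr]$ underlies both, and your element $\tilde t=b-b'+a-c$ coincides with the paper's auxiliary square-zero element $y=|x|^{\frac12}-|x^*|^{\frac12}+v|x|^{\frac12}-v^*|x^*|^{\frac12}$ (since $v^*|x^*|^{\frac12}=|x|^{\frac12}v^*$). The only difference is presentational: you spell out the product table and the cancellation in $\tilde t^2$ that the paper leaves as "elementary (but tedious)."
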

\begin{proof}
Let $x \in \N_2(I)$, and let $x=v|x|$ be the polar decomposition in $A^{**}$.
We have $x = |x^*|v = |x^*|^{\frac{1}{2}}v|x|^{\frac{1}{2}}$.
Since $x^2=0$, the elements $|x|$ and $|x^*|$ are orthogonal, and hence so are $|x|^{\frac{1}{2}}$ and $|x^*|^{\frac{1}{2}}$. 
Therefore
\[
x = \big[ \tfrac{1}{2}v|x|^{\frac{1}{2}},|x|^{\frac{1}{2}} - |x^*|^{\frac{1}{2}} \big].
\]

By \autoref{prp:PowersPolarDecomp}, the element $\tfrac{1}{2}v|x|^{\frac{1}{2}}$ belongs to~$I^{\frac{1}{2}}$, and thus $\tfrac{1}{2}v|x|^{\frac{1}{2}} \in \N_2(I^{\frac{1}{2}})$.
Set 
\[
y := |x|^{\frac{1}{2}} - |x^*|^{\frac{1}{2}} + v|x|^{\frac{1}{2}} - v^*|x^*|^{\frac{1}{2}}.
\]
It is elementary (but tedious) to check that $y^2=0$.
Note that $y$ belongs to $I^{\frac{1}{2}}$ by \autoref{prp:PowersPolarDecomp}, and thus $y \in \N_2(I^{\frac{1}{2}})$.
Using that $v|x|^{\frac{1}{2}}$ and $v^*|x^*|^{\frac{1}{2}}$ also belong to $N_2(I^{\frac{1}{2}})$, it follows that
\[
|x|^{\frac{1}{2}} - |x^*|^{\frac{1}{2}}
= y - v|x|^{\frac{1}{2}} + v^*|x^*|^{\frac{1}{2}}
\in \linSpan_\CC N_2(I^{\frac{1}{2}}),
\]
and thus
\[
x 
= \big[ \tfrac{1}{2}v|x|^{\frac{1}{2}},|x|^{\frac{1}{2}} - |x^*|^{\frac{1}{2}} \big]
\in \linSpan_\CC \big[ N_2(I^{\frac{1}{2}}),N_2(I^{\frac{1}{2}}) \big],
\]
as desired.
\end{proof}

%==========================================================================================
A two-sided ideal $I$ in a ring $R$ is said to be \emph{semiprime} if for all two-sided ideals $J \subseteq R$ we have $J \subseteq I$ whenever $J^2 \subseteq I$.
We refer to \cite[Section~10]{Lam01FirstCourse2ed} for details.
In \cite[Section~5]{GarKitThi23arX:SemiprimeIdls}, together with Gardella and Kitamura, we showed that a two-sided ideal $I$ in a \ca{} is semiprime if and only if it is idempotent (that is, $I=I^2$), if and only if it is a Dixmier ideal and satisfies $I=I^{\frac{1}{2}}$.

The next result is a generalization of \cite[Theorem~4.2]{GarThi24PrimeIdealsCAlg} from the study of square-zero elements in \ca{s} to square-zero elements in semiprime ideals in \ca{s}.
Recall that $\widetilde{A}$ denotes the minimal unitization of a \ca{}~$A$.

%==========================================================================================
\begin{thm}
\label{prp:N2-semiprime}
Let $I \subseteq A$ be a semiprime ideal in a \ca, let $N$ denote the subspace generated by $\N_2(I)$, and let $J := \widetilde{A}N\widetilde{A}$ denote the two-sided ideal of $A$ generated by $N$.
Then $J$ is semiprime, and we have
\[
N  = [N,N] \subseteq N^2, \quad
J = [J,J]^2 = AN = [A,N]^2 = [A,N] + N^2,
\]
and
\[
[J,J] = [A,J] = [A,[A,N]] = [[J,J],[J,J]] = [[A,N],[A,N]]. 
\]
\end{thm}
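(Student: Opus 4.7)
The plan is to build the numerous equalities from one foundational inclusion, $N \subseteq [N,N]$, derived from \autoref{prp:N2-Dixmier}. Since $I$ is semiprime, it equals its own square root $I^{\frac{1}{2}}$ by the characterization in \cite[Section~5]{GarKitThi23arX:SemiprimeIdls}, so $\N_2(I^{\frac{1}{2}}) = \N_2(I)$. Applying \autoref{prp:N2-Dixmier} then gives $\N_2(I) \subseteq \linSpan_\CC [\N_2(I),\N_2(I)] \subseteq [N,N]$, and hence $N \subseteq [N,N]$. The trivial inclusion $[N,N] \subseteq N \cdot N = N^2$ yields the chain $N \subseteq [N,N] \subseteq N^2$. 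Consequently $N \subseteq J^2$, so $J = \widetilde{A}N\widetilde{A} \subseteq J^2$, and the idempotency $J = J^2$ implies that $J$ is semiprime.

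The main technical obstacle is the reverse inclusion $[N,N] \subseteq N$, completing $N = [N,N]$. For $x,y \in \N_2(I)$ the commutator $[x,y]=xy-yx$ is not in general nilpotent (e.g.\ in $M_2(\CC)$, $[e_{12},e_{21}]=e_{11}-e_{22}$), so the statement is not tautological. The structural handle is the mutual orthogonality $(xy)(yx) = xy^2x = 0$ and $(yx)(xy) = yx^2y = 0$ together with the polar decompositions $x = v_x|x|$ and $y = v_y|y|$: since $x^2 = y^2 = 0$, the source and range projections of each partial isometry are mutually orthogonal. My plan is to adapt the symmetrization trick from the proof of \autoref{prp:N2-Dixmier}, where one adjoins a compensating adjoint term of the form $v|x|^{\frac{1}{2}} - v^*|x^*|^{\frac{1}{2}}$ to enforce square-zeroness, in order to exhibit $[x,y]$ as an explicit finite linear combination of elements in $\N_2(I)$, with cross terms cancelling by orthogonality.

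Once $N = [N,N]$ is established, the remaining equalities follow by ideal-theoretic manipulations. For $J = AN$: the inclusion $AN \subseteq J$ is immediate, while $J \subseteq AN$ follows from $N \subseteq N \cdot N \subseteq AN$ together with the Lie-ideal property $[A,N] \subseteq N$ (derivable from the same polar-decomposition argument that gives $[N,N] \subseteq N$), which makes $AN$ a two-sided ideal containing $N$. The equalities $J = [A,N] + N^2 = [A,N]^2 = [J,J]^2$ then follow because each of $[A,N] + N^2$, $[A,N]^2$, and $[J,J]^2$ contains $N$ (using $N \subseteq [N,N] \subseteq [A,N]$ and $N \subseteq N^2$, so that the ideal each generates inside $J$ is all of $J$) and is contained in $J \cdot J = J^2 = J$.

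For the final display, $[J,J] = [A,J]$ follows from $J = AN$ by a direct expansion via the Leibniz identity $[a,bn] = [a,b]n + b[a,n]$, and $[A,J] = [A,[A,N]]$ is a consequence of $J = [A,N] + N^2$ combined with $[A,N^2] \subseteq [A,N] \cdot N + N \cdot [A,N] \subseteq [A,[A,N]]$ applied twice. The idempotency $[J,J] = [[J,J],[J,J]]$ mirrors $J = J^2$ applied to the semiprime Lie ideal generated by $[A,N]$, and the final equality $[[J,J],[J,J]] = [[A,N],[A,N]]$ follows from $[A,N]$ generating $[J,J]$ as an additive subgroup under the bracket, using once more that $N \subseteq [A,N]$. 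The critical input throughout is the Lie-ideal property $[A,N] \subseteq N$; once that is settled by the polar-decomposition construction, all remaining identities are a matter of tracking inclusions carefully.
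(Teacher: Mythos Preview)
Your outline has several genuine gaps, and where it overlaps the paper's proof it misses much simpler arguments.

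\textbf{The inclusion $[N,N]\subseteq N$.} You propose a polar-decomposition construction adapted from \autoref{prp:N2-Dixmier}, but never carry it out. The paper instead uses the elementary identity
\[
[x,y] \;=\; (1+x)\,y\,(1-x) \;-\; y \;+\; xyx
\]
valid for all $x,y$; when $x^2=y^2=0$, each of the three summands is a square-zero element of $I$, which immediately gives $[x,y]\in N$.

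\textbf{The claim $[A,N]\subseteq N$.} This is the load-bearing step of your proposal (you use it for $J=AN$ and implicitly later), but you do not prove it: you only assert it is ``derivable from the same polar-decomposition argument.'' The identity above requires \emph{both} entries to be square-zero, so it does not extend to $[a,x]$ with $a\in A$ arbitrary. The paper never claims $[A,N]\subseteq N$; it obtains $J=AN$ by a unitary trick instead: for $a\in\widetilde{A}$, $x\in\N_2(I)$ and a unitary $u\in\widetilde{A}$ one has $axu=au(u^*xu)\in\widetilde{A}\,\N_2(I)\subseteq\widetilde{A}N\subseteq\widetilde{A}N^2\subseteq AN$, and then uses that $\widetilde{A}$ is spanned by unitaries.

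\textbf{The equalities $J=[A,N]+N^2=[A,N]^2=[J,J]^2$.} Your argument ``each contains $N$ and is contained in $J^2=J$, hence equals $J$'' is a non sequitur: these are subspaces, not ideals, so generating $J$ as an ideal is not enough. The paper proves $AN\subseteq[A,N]+N^2$ by an explicit decomposition: for $a\in A$ and $x\in\N_2(I)$ with polar decomposition $x=v|x|$, using $I=I^{1/2}=I^{1/4}$ one writes
\[
ax \;=\; \big[\,a\,|x^*|^{1/2},\,v\,|x|^{1/2}\,\big] \;+\; \big(v\,|x|^{1/4}\big)\big(|x|^{1/4}a\,|x^*|^{1/2}\big)
\;\in\; [A,\N_2(I)]+\N_2(I)^2.
\]
For $J=[A,N]^2$, the paper uses $N=[N,N]$ and Jacobi to get $[A,N]\subseteq[[A,N],[A,N]]\subseteq[A,N]^2$, together with $N^2\subseteq[A,N]^2$.

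\textbf{The final display.} Your derivation of $[J,J]=[A,J]$ via ``$J=AN$ and the Leibniz identity'' does not land in $[J,J]$; the paper instead uses $J=J^2$ to get $[A,J]=[A,J^2]\subseteq[AJ,J]+[JA,J]\subseteq[J,J]$. Your inclusion $[A,N]\cdot N+N\cdot[A,N]\subseteq[A,[A,N]]$ confuses products with commutators; the paper obtains $[A,N^2]\subseteq[AN,N]+[NA,N]\subseteq[A,N]$ and then feeds this through $[A,N]\subseteq[[A,N],[A,N]]$.
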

\begin{proof}
We will use that $I$ is a Dixmier ideal satisfying $I=I^{\frac{1}{2}}$.
Applying \autoref{prp:N2-Dixmier} at the second step, we have
\[
N 
= \linSpan_\CC \N_2(I) 
\subseteq \linSpan_\CC \big[ \N_2(I^{\frac{1}{2}}),\N_2(I^{\frac{1}{2}}) \big]
= \linSpan_\CC \big[ \N_2(I),\N_2(I) \big]
= [N,N].
\]
The inclusion $[N,N] \subseteq N^2$ follows since every commutator in $[N,N]$ belongs to~$N^2$.
To verify that $[N,N] \subseteq N$, let $x,y \in \N_2(I)$.
Then
\[
[x,y] = (1+x)y(1-x) - y + xyx.
\]
Each of the three summands is a square-zero element in $I$, which shows $[x,y] \in N$.

\medskip

We have
\[
J := \widetilde{A}N\widetilde{A} 
\subseteq \widetilde{A}N^2\widetilde{A}
\subseteq \big( \widetilde{A}N \big)\big( N\widetilde{A} \big)
\subseteq J^2,
\]
which shows that $J$ is idempotent and therefore semiprime by \cite[Theorem~A]{GarKitThi23arX:SemiprimeIdls}.

The inclusion $AN \subseteq \widetilde{A}N\widetilde{A} = J$ is clear.
Conversely, given $a \in \widetilde{A}$, $x \in \N_2(I)$ and a unitary $u \in \widetilde{A}$, using that $u^*xu \in \N_2(I)$ and that $N \subseteq N^2$, we have
\[
axu = au(u^*xu)
\in \widetilde{A} \N_2(I)
\subseteq \widetilde{A} N
\subseteq \widetilde{A} N^2
\subseteq AN.
\]
Since every element in $\widetilde{A}$ is a linear combination of four unitaries, we get $J=AN$.

The inclusion $[A,N] + N^2 \subseteq \widetilde{A}N\widetilde{A} = J$ is clear.
For the converse inclusion, we use a similar argument as in the proof of \autoref{prp:I-Dixmier} to show that $AN \subseteq [A,N] + N^2$.
Given $a \in A$ and $x \in \N_2(I)$, let $x=v|x|$ be the polar decomposition in $A^{**}$.
Since $I$ is semiprime, it is Dixmier and $I = I^{\frac{1}{2}} = I^{\frac{1}{4}}$.
By \autoref{prp:PowersPolarDecomp}, we have $v|x|^{\frac{1}{2}} \in I^{\frac{1}{2}}$ and thus $v|x|^{\frac{1}{2}} \in \N_2(I)$.
Similarly, we have $v|x|^{\frac{1}{4}}, |x|^{\frac{1}{4}} a|x^*|^{\frac{1}{2}} \in \N_2(I)$.
Using that $x=|x^*|^{\frac{1}{2}}v|x|^{\frac{1}{2}}$, we get
\[
ax
= \big[ a|x^*|^{\frac{1}{2}},v|x|^{\frac{1}{2}} \big] 
+ \big( v|x|^{\frac{1}{4}} \big) \big(|x|^{\frac{1}{4}} a|x^*|^{\frac{1}{2}} \big)
\in [A,\N_2(I)] + \N_2(I)^2.
\]
Using that $J=AN$, we get $J = [A,N] + N^2$.

The inclusion $[A,N]^2 \subseteq \widetilde{A}N\widetilde{A} = J$ is clear.
Conversely, using that $N=[N,N]$, we have
\[
[A,N]
= [A,[N,N]]
\subseteq [N,[A,N]]
\subseteq [[A,N],[A,N]]
\subseteq [A,N]^2.
\]
We also have $N^2 = [N,N]^2  \subseteq [A,N]^2$ and thus
\[
J = [A,N] + N^2
\subseteq [A,N]^2.
\]

\medskip

The inclusion $[J,J] \subseteq [A,J]$ is clear.
Using that $J=J^2$ and $[a,xy]=[ax,y]+[ya,x]$ for $a \in A$ and $x,y \in J$, we have
\[
[A,J]
= [A,J^2]
\subseteq [AJ,J] + [JA,J]
\subseteq [J,J].
\]
This shows that $[J,J] =  [A,J]$.
We get
\[
[[A,N],[A,N]] 
\subseteq [A,[A,N]]
\subseteq [A,J]
= [J,J].
\]
We have
\[
\big[ [A,N],N^2 \big]
\subseteq [A,N^2]
\subseteq [AN,N] + [NA,N]
\subseteq [A,N].
\]
and similarly $[N^2,N^2] \subseteq [A,N^2] \subseteq [A,N]$.
We also have
\[
[A,N]
= [A,[N,N]]
\subseteq [N,[A,N]]
= [[N,N],[A,N]]
\subseteq [[A,N],[A,N]].
\]
Using that $J = [A,N] + N^2$, it follows that
\begin{align*}
[J,J] 
&\subseteq 
\big[ [A,N],[A,N] \big]
+ \big[ N^2,[A,N] \big]
+ \big[ [A,N],N^2 \big]
+ \big[ N^2,N^2 \big] \\
&\subseteq \big[ [A,N],[A,N] \big].
\end{align*}
Using that $[A,N] \subseteq [A,J]=[J,J]$, we also have
\[
[J,J]
= \big[ [A,N],[A,N] \big]
\subseteq \big[ [J,J],[J,J] \big].
\]
Since the converse inclusion is clear, we get
\[
[J,J] = [A,J] = [A,[A,N]] = [[J,J],[J,J]] = [[A,N],[A,N]].
\]

Finally, since $[A,N] \subseteq [J,J]$ and $J = [A,N]^2$, we have $J \subseteq [J,J]^2$ and thus $J = [J,J]^2$.
\end{proof}

%==========================================================================================
\begin{lma}
\label{prp:DixmierNilpotent}
Let $I \subseteq A$ be a Dixmier ideal in a \ca, and let $x \in I$ be nilpotent.
Then $x \in [I^{\frac{1}{2}},I^{\frac{1}{2}}]$.
\end{lma}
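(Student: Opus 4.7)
The plan is to induct on the nilpotency degree $n$ of $x$. The base case $n \leq 2$ is immediate: for $x=0$ there is nothing to prove, and for $x^2=0$ the previous \autoref{prp:N2-Dixmier} gives
\[
x \in \linSpan_\CC \big[ \N_2(I^{\frac{1}{2}}), \N_2(I^{\frac{1}{2}}) \big] \subseteq \big[ I^{\frac{1}{2}}, I^{\frac{1}{2}} \big].
\]
For the inductive step I would assume $n\geq 3$ and the result for $I$-nilpotents of degree less than $n$, and peel off a single commutator in $[I^{\frac{1}{2}}, I^{\frac{1}{2}}]$ from $x$ whose remainder is still in $I$ but has strictly smaller nilpotency degree.

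The construction uses the polar decomposition $x=v|x|=|x^*|v$ in $A^{**}$, together with the intertwining relation $v|x|^s = |x^*|^s v$ coming from functional calculus. I would set $a := |x^*|^{\frac{1}{2}}$ and $b := v|x|^{\frac{1}{2}}$, both of which lie in $I^{\frac{1}{2}}$ by \autoref{prp:PowersPolarDecomp}. The intertwining relation yields $ab = |x^*|^{\frac{1}{2}} v|x|^{\frac{1}{2}} = v|x|^{\frac{1}{2}} |x|^{\frac{1}{2}} = v|x| = x$, so
\[
x \;=\; ab \;=\; [a,b] + ba,
\]
with $[a,b]\in[I^{\frac{1}{2}},I^{\frac{1}{2}}]$ and remainder $w := ba = v|x|^{\frac{1}{2}}|x^*|^{\frac{1}{2}} \in I^{\frac{1}{2}}I^{\frac{1}{2}}=I$ by \autoref{prp:PowersDixmierIdl}. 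The standard identity $(ba)^k = b(ab)^{k-1}a = bx^{k-1}a$ then gives $w^{n-1} = v|x|^{\frac{1}{2}}x^{n-2}|x^*|^{\frac{1}{2}}$.

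The main obstacle is to show $w^{n-1}=0$. Starting from $x^n = x^{n-1}\cdot |x^*|v = 0$, I would multiply on the right by $v^*$ and use $|x^*|vv^* = |x^*|$ to obtain $x^{n-1}|x^*|=0$; the C*-identity applied to $x^{n-1}|x^*|^{\frac{1}{2}}$ then yields $x^{n-1}|x^*|^{\frac{1}{2}}=0$. Rewriting $x^{n-1}=v|x|x^{n-2}$ and left-multiplying by $v^*$ (using $v^*v|x|=|x|$) produces $|x|x^{n-2}|x^*|^{\frac{1}{2}}=0$. A final C*-identity comparison for $c := |x|^{\frac{1}{2}}x^{n-2}|x^*|^{\frac{1}{2}}$ gives $c^*c = |x^*|^{\frac{1}{2}}(x^{n-2})^*(|x|x^{n-2}|x^*|^{\frac{1}{2}}) = 0$, whence $c=0$ and therefore $w^{n-1}=vc=0$. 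The inductive hypothesis applied to $w\in I$ (of nilpotency degree at most $n-1$) yields $w\in[I^{\frac{1}{2}},I^{\frac{1}{2}}]$, and thus $x = [a,b]+w \in [I^{\frac{1}{2}},I^{\frac{1}{2}}]$. The delicate point is the bookkeeping of support and range projections of $v$ (which lives only in $A^{**}$) through the repeated C*-identity arguments; the symmetric factorization $|x|=|x|^{\frac{1}{2}}\cdot|x|^{\frac{1}{2}}$ is essential so that both factors of the extracted commutator already sit in $I^{\frac{1}{2}}$ rather than only one of them.
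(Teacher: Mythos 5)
Your proof is correct and follows essentially the same inductive strategy as the paper: extract a commutator of two elements of $I^{\frac{1}{2}}$ produced by the polar decomposition of $x$, and show the remainder lies in $I$ with strictly smaller nilpotency degree. The paper uses the decomposition $x = \big[ v|x|^{\frac{1}{2}}, |x|^{\frac{1}{2}} \big] + |x|^{\frac{1}{2}}v|x|^{\frac{1}{2}}$ and cites the proof of Robert's Lemma~2.1 for the nilpotency drop, whereas you use $x = \big[ |x^*|^{\frac{1}{2}}, v|x|^{\frac{1}{2}} \big] + v|x|^{\frac{1}{2}}|x^*|^{\frac{1}{2}}$ (the same decomposition with $|x|$ and $|x^*|$ interchanged via $v|x|^s = |x^*|^s v$) and verify the drop directly by repeated $C^*$-identity arguments, a self-contained variant of the same idea.
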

\begin{proof}
As in the proof of \cite[Lemma~2.1]{Rob16LieIdeals}, we proceed by induction on the degree of nilpotency.
To start, let $x \in I$ be a square-zero element, and let $x=v|x|$ be the polar decomposition of $x$ in $A^{**}$.
By \autoref{prp:PowersPolarDecomp}, the elements $v|x|^{\frac{1}{2}}$ and $|x|^{\frac{1}{2}}$ belong to $I^{\frac{1}{2}}$, and thus
\[
x
= \big[ v|x|^{\frac{1}{2}}, x^{\frac{1}{2}} \big]
\in \big[ I^{\frac{1}{2}},I^{\frac{1}{2}} \big].
\]

Next, assume that for some $k \geq 2$ we have shown that every element $x \in I$ with $x^k=0$ belongs to $[I^{\frac{1}{2}},I^{\frac{1}{2}}]$.
Let $x \in I$ satisfy $x^{k+1}=0$, and let $x=v|x|$ be the polar decomposition of $x$ in $A^{**}$.
Set $y:=|x|^{\frac{1}{2}}v|x|^{\frac{1}{2}}$, which belongs to $A$.
As in the proof of \cite[Lemma~2.1]{Rob16LieIdeals}, one shows that $y^k=0$.
By \autoref{prp:PowersPolarDecomp}, we have $v|x|^{\frac{1}{2}}, |x|^{\frac{1}{2}} \in I^{\frac{1}{2}}$.
By assumption of the induction, we have $y \in [I^{\frac{1}{2}},I^{\frac{1}{2}}]$, and thus
\[
x 
= \big[ v|x|^{\frac{1}{2}},|x|^{\frac{1}{2}} \big] + y 
\in \big[ I^{\frac{1}{2}},I^{\frac{1}{2}} \big]
\]
as desired.
\end{proof}

%==========================================================================================
\begin{prp}
\label{prp:NilpotentSemiprime}
Let $I \subseteq A$ be a semiprime ideal in a \ca, and let $x \in I$ be nilpotent.
Then $x \in [A,I]$.
\end{prp}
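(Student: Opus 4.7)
The plan is to reduce the statement directly to the preceding \autoref{prp:DixmierNilpotent}, exploiting the structural characterization of semiprime ideals in \ca{s} recalled just before \autoref{prp:N2-semiprime}.

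More precisely, I would first invoke the fact (from \cite[Section~5]{GarKitThi23arX:SemiprimeIdls}) that a two-sided ideal $I \subseteq A$ in a \ca{} is semiprime if and only if it is a Dixmier ideal satisfying $I = I^{\frac{1}{2}}$. Thus in our setting $I$ is Dixmier and equals its own square root. Next, I would apply \autoref{prp:DixmierNilpotent} to the nilpotent element $x$ in the Dixmier ideal $I$, obtaining
\[
x \in \big[ I^{\frac{1}{2}}, I^{\frac{1}{2}} \big].
\]
Substituting $I^{\frac{1}{2}} = I$, this becomes $x \in [I,I] \subseteq [A,I]$, which is the desired conclusion.

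There is no genuine obstacle here; the content of the result is packaged in the two inputs (the characterization of semiprime ideals via the identity $I = I^{\frac{1}{2}}$, and the Dixmier-ideal version of the fact that nilpotents are commutators). The proof is therefore only a couple of lines of assembly.
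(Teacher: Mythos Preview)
Your proposal is correct and follows essentially the same approach as the paper: use that a semiprime ideal is Dixmier with $I = I^{\frac{1}{2}}$, apply \autoref{prp:DixmierNilpotent} to get $x \in [I^{\frac{1}{2}},I^{\frac{1}{2}}]$, and then conclude $x \in [A,I]$. The only cosmetic difference is that the paper writes the final step as $[I^{\frac{1}{2}},I^{\frac{1}{2}}] \subseteq [A,I^{\frac{1}{2}}] = [A,I]$ rather than substituting $I^{\frac{1}{2}}=I$ first, but this is the same argument.
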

\begin{proof}
Since $I$ is semiprime, it is a Dixmier ideal and satisfies $I = I^{\frac{1}{2}}$.
Applying \autoref{prp:DixmierNilpotent}, we have
\[
x 
\in [I^{\frac{1}{2}},I^{\frac{1}{2}}]
\subseteq [A,I^{\frac{1}{2}}]
= [A,I],
\]
as desired.
\end{proof}

%==========================================================================================
\begin{thm}
\label{prp:CharSemiprime}
Let $A$ be a unital \ca{} that admits a unital \stHom{} $M_2(\CC) \oplus M_3(\CC) \to A$, and let $I \subseteq A$ be a two-sided ideal.
The following statements are equivalent:
\begin{enumerate}
\item
The ideal $I \subseteq A$ is semiprime.
\item
We have $I=A[I,I]A$, that is, $I$ is generated by its commutators as an ideal in $A$.
\item
We have $I=\widetilde{I}[I,I]\widetilde{I}$, that is, $I$ is generated by its commutators as an ideal.
%\item
%The ideal $I$ is generated by $[I,I]$ as a ring.
\item
We have $I=[I,I]^2$.
%\item
%The ideal $I$ is generated by its nilpotent elements as a ring.
\item
We have $[I,I]=[[I,I],[I,I]]$.
\end{enumerate}
\end{thm}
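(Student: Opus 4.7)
The plan is to close the cycle $(1) \Rightarrow (4) \Rightarrow (3) \Rightarrow (2) \Rightarrow (1)$ and additionally establish $(1) \Rightarrow (5) \Rightarrow (2)$, which jointly yield all equivalences. For $(1) \Rightarrow (4)$ and $(1) \Rightarrow (5)$, I apply \autoref{prp:N2-semiprime} to the semiprime ideal $I$ with $N := \linSpan_\CC \N_2(I)$ and $J := \widetilde{A}N\widetilde{A}$; the theorem furnishes $J = [J,J]^2$ and $[J,J] = [[J,J],[J,J]]$, so I need only identify $J = I$. This follows by combining \autoref{prp:AAI-AI} (which gives $I = A[A,I]A$ with $[A,I] = \linSpan_\CC \FN_2(A,I)$) with \autoref{prp:FN2} (which gives $\FN_2(A,I) \subseteq \N_2(I)$): together these force $I \subseteq ANA \subseteq J$, and $J \subseteq I$ is immediate. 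The remaining easy implications are $(4) \Rightarrow (3)$ by writing each $c_1 c_2 = 1 \cdot c_1 \cdot c_2 \in \widetilde{I}[I,I]\widetilde{I}$; $(3) \Rightarrow (2)$ via $\widetilde{I} \subseteq A$; and $(2) \Rightarrow (1)$ via $I = A[I,I]A \subseteq AI^2A \subseteq I^2$, using the characterization of semiprimeness as idempotence.

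The main technical step is $(5) \Rightarrow (2)$. Set $J_0 := A[I,I]A$; the goal is $I = J_0$. First, $J_0$ is semiprime: from $(5)$, $[I,I] \subseteq [I,I]^2$, hence $J_0 \subseteq A[I,I]^2A \subseteq J_0^2$, so $J_0 = J_0^2$. Second, I claim $I^2 \subseteq J_0$. Since $[I,I] \subseteq J_0$, the image $I/J_0$ is a commutative two-sided ideal of the quotient ring $A/J_0$, and a direct calculation shows that any commutative two-sided ideal $K$ in any ring $A'$ satisfies $K^2 \subseteq Z(A')$: for $x,y \in K$ and $b \in A'$, both $b(xy) = (bx)y = y(bx)$ and $(xy)b = x(yb) = (yb)x$ equal $ybx$ by commutativity. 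Thus $(I/J_0)^2$ is a central two-sided ideal of $A/J_0$. But $A/J_0$ inherits from $A$ the identity $A' = A'[A',A']A'$ (by the remark after \autoref{prp:AAI-AI} applied to $I = A$), and in any such ring every central two-sided ideal vanishes: if $z$ lies in one, then $az \in Z(A')$ for every $a \in A'$, which quickly forces $z \cdot [A',A'] = 0$ and hence $z \cdot A' = z \cdot A'[A',A']A' = 0$, so $z = 0$. Applied to $(I/J_0)^2$, we conclude $I^2 \subseteq J_0$.

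Finally, I upgrade $I^2 \subseteq J_0$ to $I \subseteq J_0$ using the $C^*$-algebra structure of $A$. For $x \in I$, the adjoint $x^* \in A$ gives $x^*x \in I$ and $(x^*x)^2 = x^*(xx^*)x \in I \cdot I^2 \subseteq J_0$; since $J_0$ is semiprime it is a Dixmier ideal with $J_0 = J_0^{1/2}$, so the positive element $x^*x$ with $(x^*x)^2 \in J_0$ lies in $J_0^{1/2} = J_0$, and then $x \in J_0^{1/2} = J_0$ by \cite[Proposition~3.10]{GarKitThi23arX:SemiprimeIdls}. Hence $I = J_0 = A[I,I]A$, establishing $(2)$. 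The main obstacle is the middle step $I^2 \subseteq J_0$ of $(5) \Rightarrow (2)$: it requires noticing that the commutative-ideal and central-ideal-vanishing arguments are purely ring-theoretic and can be run in $A/J_0$ despite that quotient not necessarily being a $C^*$-algebra, provided one has the identity $A' = A'[A',A']A'$, which the hypothesis on $A$ delivers via \autoref{prp:AAI-AI}.
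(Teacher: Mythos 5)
Your proof is correct, and your treatment of the implications $(1)\Rightarrow(4)$, $(1)\Rightarrow(5)$, $(4)\Rightarrow(3)\Rightarrow(2)$, and $(2)\Rightarrow(1)$ matches the paper's. The genuine difference is in the implication out of $(5)$. The paper proves $(5)\Rightarrow(1)$ directly in a three-line chain: it applies \autoref{prp:AAI-AI} to the ideal $I^2$ to write $I^2=A[A,I^2]A$, uses $[A,I^2]\subseteq[I,I]$ together with $(5)$ to obtain $I^2\subseteq A\bigl[[I,I],[I,I]\bigr]A\subseteq I^4$, concludes $I^2=I^4$ so that $I^2$ is idempotent and hence semiprime, and finally (taking $J=I$ against $I^2$) gets $I\subseteq I^2$, so $I$ itself is idempotent. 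You instead prove $(5)\Rightarrow(2)$: after showing $J_0:=A[I,I]A$ is idempotent (same computation), you pass to the quotient ring $A/J_0$, observe that a commutative ideal has central square and that a central ideal vanishes in any unital ring generated by its commutators as an ideal, arriving at the purely ring-theoretic fact $I^2\subseteq J_0$, and then upgrade to $I\subseteq J_0$ using the square-root theory of Dixmier ideals. Both routes ultimately exploit the same engine — that $A$ and its quotients are generated by commutators — but the paper uses it on $I^2$ in $A$ via \autoref{prp:AAI-AI}, whereas you use it on central elements of $A/J_0$. The paper's version is tighter; yours has the instructive feature of cleanly separating a part that is valid in an arbitrary unital ring generated by commutators (notably, $A/J_0$ need not be a $C^*$-algebra since $J_0$ is not assumed closed) from the single $C^*$-specific upgrade step where the Dixmier square root is invoked.
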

\begin{proof}
To show that~(2) implies~(1), assume that $I$ is generated by $[I,I]$ as an ideal in $A$.
We clearly have $[I,I] \subseteq I^2$, and therefore
\[
I = A[I,I]A \subseteq AI^2A = I^2.
\]
It follows that $I$ is idempotent, and thus semiprime by \cite[Theorem~A]{GarKitThi23arX:SemiprimeIdls}.

It is clear that~(4) implies~(3), which in turn implies~(2).

Assume that $I$ is semiprime, and let $N$ denote the subspace generated by the square-zero elements in~$I$.
Applying \autoref{prp:AAI-AI}, we have 
\[
I 
= A[A,I]A
= \linSpan_\CC A\FN_2(A,I)A
\subseteq ANA.
\]
Now it follows from \autoref{prp:N2-semiprime} that
\[
I 
= ANA 
= [ANA,ANA]^2
= [I,I]^2,
\]
which verifies~(4), and that
\[
[I,I]
= [ANA,ANA]
= [[ANA,ANA],[ANA,ANA]]
= [[I,I],[I,I]], \andSep
\]
which verifies~(5).

Finally, assuming~(5) let us verify~(1).
Using that $[a,xy]=[ax,y]+[ya,x]$ for $a \in A$ and $x,y \in I$, we have
\[
[A,I^2] \subseteq [AI,I] + [IA,I] \subseteq [I,I].
\]
Applying \autoref{prp:AAI-AI} for the ideal $I^2$, we get
\[
I^2
= A[A,I^2]A
\subseteq A[I,I]A
\subseteq A[[I,I],[I,I]]A
\subseteq I^4.
\]
It follows that $I^2=I^4$, and thus semiprime by \cite[Theorem~A]{GarKitThi23arX:SemiprimeIdls}.
\end{proof}

%==========================================================================================
\section{Lie ideals in properly infinite C*-algebras}

%==========================================================================================
Let $A$ be a \ca{} with a unit that is weakly divisible of degree~$2$, that is, that admits a unital \stHom{} $M_2(\CC) \oplus M_3(\CC) \to A$, and let $L \subseteq A$ be a (not necessarily closed) Lie ideal.
We have seen in \autoref{prp:L-Related-IdlAL} that $L$ is related to the two-sided ideal $I := A[A,L]A$.
Further, by \autoref{prp:CharLieEmbr}, $L$ is commutator equivalent to $I$ if and only if $[A,L] = [A,[A,L]]$.

This raises the question when Lie ideals $L$ in $A$ satisfy $[A,L] = [A,[A,L]]$.
A sufficient condition is that $A = Z(A) + [A,A]$, where $Z(A)$ denotes the center of~$A$;
see \autoref{prp:ZA-AA}.
We observe in \autoref{prp:Sufficient-ZA-AA} that this condition is satisfies in a number of situations, in particular for all properly infinite \ca{s}.

We then prove the main result of the paper:
Every Lie ideal in a unital, properly infinite \ca{} is commutator equivalent to a (unique) two-sided ideal;
see \autoref{prp:PropInf}.

%==========================================================================================
\begin{lma}
\label{prp:ZA-AA}
Let $A$ be a unital \ca{} such that $A = Z(A) + [A,A]$, and let $L \subseteq A$ be a Lie ideal.
Then
\[
[A,L] = [A,[A,L]].
\]
\end{lma}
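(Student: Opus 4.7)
The plan is to prove the two inclusions separately. The inclusion $[A,[A,L]] \subseteq [A,L]$ is free: since $L$ is a Lie ideal we have $[A,L] \subseteq L$, so any commutator $[a,y]$ with $y \in [A,L]$ automatically satisfies $y \in L$ and hence lies in $[A,L]$.

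For the non-trivial inclusion $[A,L] \subseteq [A,[A,L]]$, I would take a generic generator $[a,x]$ with $a \in A$ and $x \in L$, and use the hypothesis $A = Z(A) + [A,A]$ to write $a = z + c$ with $z \in Z(A)$ and $c \in [A,A]$. The central part contributes nothing, since $[z,x]=0$, so it suffices to show $[c,x] \in [A,[A,L]]$ for every $c$ that is a finite sum of commutators in $A$. By linearity it is enough to handle $c = [a_1,b_1]$.

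The key tool is the Jacobi identity:
\[
\big[[a_1,b_1],x\big] = \big[a_1,[b_1,x]\big] - \big[b_1,[a_1,x]\big].
\]
Here $[b_1,x]$ and $[a_1,x]$ both belong to $[A,L]$, so each of the two summands on the right lies in $[A,[A,L]]$. Therefore $[c,x] \in [A,[A,L]]$, and consequently $[a,x] = [c,x] \in [A,[A,L]]$. Taking $\CC$-linear spans over all $a \in A$ and $x \in L$ yields $[A,L] \subseteq [A,[A,L]]$.

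I do not anticipate any serious obstacle: the whole argument is just the Jacobi identity combined with the centrality trick. The only thing to keep straight is bookkeeping of the subspaces $[A,L]$, which by definition is the \emph{linear span} of the commutators $[a,x]$, so linearity automatically carries the pointwise conclusion to the whole subspace.
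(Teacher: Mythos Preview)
Your proof is correct and follows essentially the same approach as the paper: use the hypothesis to reduce $[A,L]$ to $[[A,A],L]$ (the central part vanishes), and then apply the Jacobi identity to land in $[A,[A,L]]$. The paper writes this at the level of subspaces in one line, while you unpack it element-wise, but the argument is identical.
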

\begin{proof}
The inclusion `$\supseteq$' is clear.
Conversely, using the assumption at the first step, and using the Jacobi identity at the last step, we get
\[
[A,L] 
= \big[ Z(A)+[A,A],L \big]
= \big[ [A,A],L \big]
\subseteq \big[ A,[A,L] \big],
\]
as desired.
\end{proof}

%==========================================================================================
Following Winter \cite{Win12NuclDimZstable}, a \ca{} $A$ is said to be \emph{pure} if its Cuntz semigroup $\Cu(A)$ is almost unperforated and almost divisible.
Pure \ca{s} form a robust class \cite{AntPerThiVil24arX:PureCAlgs} that includes every \ca{} that tensorially absorbs the Jiang-Su algebra $\mathcal{Z}$, that is, $A \cong A \otimes \mathcal{Z}$.
By \cite[Theorem~7.3.11]{AntPerThi18TensorProdCu}, a \ca{}~$A$ is pure if and only if $\Cu(A) \cong \Cu(A) \otimes \Cu(\mathcal{Z})$.

By a \emph{quasitrace} of a \ca{} $A$ we mean a lower-semicontinuous, $[0,\infty]$-valued $2$-quasitrace.
By Haagerup's theorem \cite{Haa14Quasitraces}, every bounded quasitrace on a unital, exact \ca{} is a trace, and this was later extended by Kirchberg \cite{Kir97TracesProjectionlessSimple}, who showed that arbitrary quasitraces on exact \ca{s} are traces.

A unital \ca{} $A$ is \emph{weakly central} if the center separates maximal ideals:
Maximal ideals $M_1,M_2 \subseteq A$ satisfy $M_1=M_2$ whenever $M_1 \cap Z(A) = M_2 \cap Z(A)$.
By Vesterstr{\o}m's theorem, $A$ is weakly central if and only if $A$ has the \emph{centre-quotient property}: For every closed, two-sided ideal $I \subseteq A$, the quotient map $A \to A/I$ maps $Z(A)$ onto $Z(A/I)$.

A unital \ca{} $A$ has the \emph{Dixmier property} if for every element $a \in A$ the closed convex hull of the unitary orbit $\{uau^* : u \in A \text{ unitary} \}$ has nonempty intersection with the center $Z(A)$.
It was shown in \cite[Theorem~1.1]{ArcRobTik17Dixmier} that this property holds if and only if $A$ is weakly central, every simple quotient of $A$ has at most one tracial state, and every extreme tracial state of $A$ factors through a simple quotient.
Dixmier showed that every von Neumann algebra has the property named after him.

%==========================================================================================
\begin{prp}
\label{prp:Sufficient-ZA-AA}
Let $A$ be a unital \ca.
Then $A = Z(A) + [A,A]$ in each of the following cases:
\begin{enumerate}
\item
If $A$ is pure, every quasitrace on $A$ is a trace, and $A$ has the Dixmier property.
\item
If $A$ has no tracial states (in particular, if $A$ is properly infinite).
\item
If $A$ is a von Neumann algebra.
\end{enumerate}
\end{prp}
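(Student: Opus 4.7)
The plan is to handle the three cases separately, in roughly increasing order of difficulty.

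For case (2), the hypothesis that $A$ has no tracial states combined with Pop's theorem \cite{Pop02FiniteSumsCommutators} (mentioned in the introduction) immediately yields $A=[A,A]$, so trivially $A=Z(A)+[A,A]$. To see that a unital properly infinite \ca{} has no tracial states, I would use that proper infiniteness of $1$ provides orthogonal projections $p_1,p_2\leq 1$ that are both Murray--von Neumann equivalent to $1$; any tracial state $\tau$ would then satisfy $\tau(p_1)=\tau(p_2)=\tau(1)$ and $\tau(p_1)+\tau(p_2)\leq\tau(1)$, forcing $\tau(1)=0$, contradiction.

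For case (3), I would reduce to case (2) plus a classical theorem by splitting the von Neumann algebra as $M=M_f\oplus M_\infty$ into its finite and properly infinite summands; this induces compatible splittings of $Z(M)$ and of $[M,M]$, so it suffices to treat each summand. The properly infinite summand is handled by case (2). For the finite summand $M_f$, the center-valued trace $E\colon M_f\to Z(M_f)$ is a surjective conditional expectation onto $Z(M_f)$, and the classical theorem (due to Fack--de la Harpe in the factor case, with standard reductions to the non-factor case via direct integral decomposition) gives $\ker(E)\subseteq [M_f,M_f]$. Hence every $a\in M_f$ decomposes as $a=E(a)+(a-E(a))\in Z(M_f)+[M_f,M_f]$.

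For case (1), the Dixmier property provides a (unique) conditional expectation $E\colon A\to Z(A)$ with the property that $a-E(a)$ lies in the norm closure $\overline{[A,A]}$ for every $a\in A$; this follows from the Cuntz--Pedersen description $\overline{[A,A]}=\bigcap_\tau\ker(\tau)$ together with the fact that $E$ preserves every tracial state. The remaining task is to upgrade this to $a-E(a)\in [A,A]$ without taking the closure. Under the purity hypothesis and the assumption that every quasitrace is a trace, I would invoke (or extend) the Ng--Robert theorem \cite{NgRob16CommutatorsPureCa} to conclude that $[A,A]$ is norm-closed, so that $a-E(a)\in [A,A]$ as required.

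The main obstacle I anticipate is precisely this last step: the Ng--Robert closedness theorem is formulated under exactness, so one needs a version where the role of exactness is played by the weaker hypothesis that every quasitrace is a trace. I would expect this to go through by revisiting their argument (or appealing to a reformulation in terms of Cu-semigroup computations where the exactness input enters only via Haagerup/Kirchberg-type identifications of quasitraces with traces), but this is the technical heart of case (1).
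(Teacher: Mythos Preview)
Your approach is correct and structurally the same as the paper's: case~(2) via Pop, case~(1) via ``Dixmier gives $A=Z(A)+\overline{[A,A]}$'' followed by ``Ng--Robert gives $[A,A]$ closed'', and case~(3) handled separately. Two remarks on the details:

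For case~(1), the Dixmier property as defined here (nonempty intersection, not the singleton version) does not obviously yield a \emph{unique} conditional expectation, so that part of your argument is imprecise. But you do not need it: if $z$ lies in the closed convex hull of $\{uau^*:u\text{ unitary}\}$, then $z-a$ is a norm-limit of sums $\sum_i\lambda_i(u_iau_i^*-a)=\sum_i\lambda_i[u_i,au_i^*]\in[A,A]$, so $a\in Z(A)+\overline{[A,A]}$ directly. The paper obtains this same inclusion by citing \cite[Lemma~4.10]{ArcGogRob23LocalVarDixmier}. More importantly, your anticipated obstacle is not one: \cite[Theorem~1.1]{NgRob16CommutatorsPureCa} is already stated for pure \ca{s} in which every quasitrace is a trace (exactness enters only as a sufficient condition for this via Haagerup--Kirchberg), so no extension of Ng--Robert is needed.

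For case~(3), the paper simply quotes that $M=Z(M)+[M,M]$ from the proof of \cite[Theorem~5.19]{BreKisShu08LieIdeals}, whereas you sketch an independent argument via the finite/properly-infinite decomposition and the center-valued trace. Your route is valid and more self-contained; the paper's is shorter because the fact is already in the literature.
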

\begin{proof}
(1)
If $A$ has the Dixmier property, then every element in $A$ is a Dixmier element in the sense of \cite{ArcGogRob23LocalVarDixmier}, and thus it follows from \cite[Lemma~4.10]{ArcGogRob23LocalVarDixmier} that $A=Z(A)+\overline{[A,A]}$.

If $A$ is pure and every quasitrace on $A$ is a trace, then $[A,A]$ is closed by \cite[Theorem~1.1]{NgRob16CommutatorsPureCa}.
Combining both results, we get $A = Z(A) + [A,A]$.

(2)
By \cite[Theorem~1]{Pop02FiniteSumsCommutators}, we have $A=[A,A]$ if (and only if) $A$ has no tracial states.

(3)
It was shown in the proof of \cite[Theorem~5.19]{BreKisShu08LieIdeals} that every von Neumann algebra $A$ satisfies $A=Z(A)+[A,A]$.
\end{proof}

%==========================================================================================
\begin{thm}
\label{prp:PropInf}
Let $A$ be a unital, properly infinite \ca{}, and let $L \subseteq A$ be a Lie ideal.
Then for the two-sided ideal $I:=A[A,L]A$, we have
\[
[A,L]=[A,[A,L]]=[A,I].
\]
In particular, $L$ is commutator equivalent to $I$, and hence also embraced by~$I$, and related to~$I$.

Further, $I$ is the only two-sided ideal of $A$ to which $L$ is related.
\end{thm}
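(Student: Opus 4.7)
The plan is to obtain this theorem as a direct corollary of the machinery developed in the previous sections; the role of the proof is simply to check that both hypotheses feeding into \autoref{prp:L-Related-IdlAL} and \autoref{prp:CharLieEmbr} are automatic in the properly infinite setting.

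First I would verify that a unital properly infinite \ca{} admits a unital \stHom{} $M_2(\CC) \oplus M_3(\CC) \to A$. Proper infiniteness of $1$ produces a decomposition $1 = p + q$ with $p \sim 1 \sim q$; if $w$ is a partial isometry witnessing $p \sim q$, then $\{p, q, w, w^*\}$ forms a system of $2 \times 2$ matrix units in $A$, yielding a unital embedding $M_2(\CC) \hookrightarrow A$, and composing with the (non-injective) coordinate projection $M_2(\CC) \oplus M_3(\CC) \twoheadrightarrow M_2(\CC)$ gives the required unital map. With this in place, \autoref{prp:L-Related-IdlAL} applies and delivers the chain $[A,I] = [A,[A,L]] \subseteq [A,L] \subseteq I$ together with the uniqueness of $I = A[A,L]A$ among two-sided ideals related to $L$.

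The remaining task is to upgrade the middle inclusion to the equality $[A,L] = [A,[A,L]]$, which by \autoref{prp:CharLieEmbr} is exactly what commutator equivalence of $L$ with $I$ requires. Here I would appeal to \autoref{prp:ZA-AA}, whose hypothesis $A = Z(A) + [A,A]$ is supplied for properly infinite $A$ by \autoref{prp:Sufficient-ZA-AA}(2) (in fact in the stronger form $A = [A,A]$, coming from Pop's theorem on \ca{s} without tracial states). Chaining the two equalities yields $[A,L] = [A,[A,L]] = [A,I]$, so $L$ is commutator equivalent to $I$ and hence also embraced by and related to~$I$; the uniqueness of $I$ was already handed to us in the previous step.

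I do not foresee a genuine obstacle, as the structural content lives entirely inside \autoref{prp:L-Related-IdlAL} and inside the short Jacobi-identity computation of \autoref{prp:ZA-AA}. The only mildly subtle point is the first step --- confirming that proper infiniteness alone, without separability or simplicity, suffices to produce the unital embedding $M_2(\CC) \hookrightarrow A$ --- but this is a routine Murray--von Neumann manipulation.
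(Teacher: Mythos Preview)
Your overall strategy matches the paper's --- reduce to \autoref{prp:L-Related-IdlAL} for the relation and uniqueness, then invoke \autoref{prp:ZA-AA} via Pop's theorem to get $[A,L]=[A,[A,L]]$, and conclude with \autoref{prp:CharLieEmbr}. Those steps are fine.

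The gap is in your first step. Proper infiniteness of $1$ does \emph{not} in general give a decomposition $1=p+q$ with $p\sim q$ (let alone $p\sim 1\sim q$): such a decomposition forces $[1]=2[p]$ in $K_0(A)$, and in $\calO_\infty$ one has $K_0(\calO_\infty)\cong\ZZ$ with $[1]$ a generator, so $[1]$ is not divisible by~$2$. Thus $\calO_\infty$ is a unital, properly infinite \ca{} with no unital \stHom{} $M_2(\CC)\to\calO_\infty$. The paper in fact flags exactly this point in \autoref{sec:LieDivUnit}, noting that $\calO_\infty$ admits a unital map from $M_2(\CC)\oplus M_3(\CC)$ but from no single $M_n(\CC)$.

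The paper's fix is to route through $\calO_\infty$ itself: proper infiniteness yields a unital \stHom{} $\calO_\infty\to A$ (a standard fact, cited from Blackadar), and since $\calO_\infty$ has real rank zero and no characters, \autoref{prp:RR0} supplies a unital \stHom{} $M_2(\CC)\oplus M_3(\CC)\to\calO_\infty$; composing gives the map into $A$. Once you replace your Murray--von Neumann paragraph with this, the rest of your argument goes through verbatim and coincides with the paper's proof.
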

\begin{proof}
Since $A$ is unital and properly infinite, there exists a unital \stHom{} from the Cuntz algebra $\mathcal{O}_\infty$ to~$A$; 
see \cite[Proposition~III.1.3.3]{Bla06OpAlgs}.
Further, it is well-known that $\mathcal{O}_\infty$ admits a unital \stHom{} from $M_2(\CC) \oplus M_3(\CC)$, using for example \autoref{prp:RR0} and that $\mathcal{O}_\infty$ is a unital \ca{} of real rank zero that admits no characters.
It follows that $A$ admits a unital \stHom{} $M_2(\CC) \oplus M_3(\CC) \to A$.

We may therefore apply \autoref{prp:L-Related-IdlAL} and obtain that $L$ is related to $I$, and that $I$ is the only two-sided ideal to which $L$ is related.
Further, since $A$ is unital and properly infinite, we have $A=[A,A]$ by Pop's theorem \cite{Pop02FiniteSumsCommutators}, and thus $[A,L]=[A,[A,L]]$ by \autoref{prp:ZA-AA}.
It therefore follows from \autoref{prp:CharLieEmbr} that $L$ is commutator equivalent to $I$ (and thus also embraced by $I$).
\end{proof}

%==========================================================================================
\begin{cor}
\label{prp:PropInfDecompLie}
Let $A$ be a unital, properly infinite \ca{}.
Then an additive subgroup $L \subseteq A$ is a Lie ideal if and only if $[A,I] \subseteq L \subseteq T([A,I])$ for some (unique) two-sided ideal $I \subseteq A$.
We get a decomposition of the family $\mathcal{L}$ of all Lie ideals in~$A$ as:
\[
%\big\{ L \subseteq A : L \text{ is Lie ideal} \big\} 
\mathcal{L} = \bigsqcup_{I \in \calI} \big\{ L \subseteq A \text{ additive subgroup} : [A,I] \subseteq L \subseteq T([A,I]) \big\}
\]
where $\calI$ denotes the lattice of two-sided ideals in $A$.
\end{cor}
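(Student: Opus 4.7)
The plan is to reduce this to \autoref{prp:PropInf}, which has essentially done all the work; what remains is a clean repackaging plus a short check of the reverse implication and uniqueness.

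First I would handle the forward direction. Given a Lie ideal $L$, set $I := A[A,L]A$. By \autoref{prp:PropInf}, we have the commutator equivalence $[A,L] = [A,I]$. Since $L$ is a Lie ideal, $[A,I] = [A,L] \subseteq L$. For the other inclusion, every $x \in L$ satisfies $[A,x] \subseteq [A,L] = [A,I]$, whence $x \in T([A,I])$. This gives the embracement $[A,I] \subseteq L \subseteq T([A,I])$.

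Next I would handle the reverse direction, which is elementary. If $L$ is an additive subgroup of $A$ with $[A,I] \subseteq L \subseteq T([A,I])$ for some two-sided ideal $I$, then for any $a \in A$ and $x \in L$, the inclusion $L \subseteq T([A,I])$ yields $[a,x] \in [A,I] \subseteq L$. Thus $L$ is closed under the Lie bracket with $A$, hence a Lie ideal.

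For uniqueness, the key observation is that embracement by $I$ implies relatedness to $I$: indeed, $[A,I] \subseteq I$ gives $T([A,I]) \subseteq T(I)$, so
\[
[A,I] \subseteq L \subseteq T([A,I]) \subseteq T(I),
\]
which is precisely the condition that $L$ is related to $I$. The uniqueness clause of \autoref{prp:PropInf} then guarantees that at most one two-sided ideal $I$ can appear, and the disjoint union decomposition of $\mathcal{L}$ indexed by $\calI$ is immediate.

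I do not expect any substantial obstacle; the only subtlety is noticing that the uniqueness in \autoref{prp:PropInf} is formulated in terms of the weaker "related to" condition, while the corollary concerns the stronger "embraced by" condition — so one has to spend one line observing that embracement implies relatedness before quoting the uniqueness.
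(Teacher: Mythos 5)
Your proof is correct and is exactly the natural reduction to \autoref{prp:PropInf}; the paper leaves this corollary without a separate proof, and your argument supplies the implicit one. Your explicit observation that embracement by $I$ implies relatedness to $I$ (via $T([A,I]) \subseteq T(I)$) is precisely the one-line bridge needed to transfer the uniqueness clause from the theorem and thereby justify that the union is disjoint.
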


%==========================================================================================
\begin{rmk}
The proof of \autoref{prp:PropInf} can easily be adapted to show the following:
In a \ca{} $A$ that admits a unital \stHom{} $M_2(\CC) \oplus M_3(\CC) \to A$ and such that $A=Z(A)+[A,A]$, every Lie ideal $L$ is commutator equivalent to the two-sided ideal $A[A,L]A$.

We note that a unital \ca{} admits no characters whenever it is pure or has no tracial stetes.
Therefore, with view towards \autoref{qst:NoChar} and \autoref{prp:Sufficient-ZA-AA}, we expect that every Lie ideal $L$ is commutator equivalent to $A[A,L]A$ whenever~$A$ is a unital, pure,  \ca{} with the Dixmier property and such that every quasitrace on $A$ is a trace, or whenever~$A$ is unital and has no tracial states.
\end{rmk}

%==========================================================================================
\begin{qst}
Given Lie ideals $L,M \subseteq A$ in a unital, properly infinite \ca, what is the relationship between the two-sided ideals generated by $[L,M]$, by $[A,[L,M]]$, and by $[A,L]$ and $[A,M]$?
\end{qst}

%==========================================================================================
%==========================================================================================
\section{Lie ideals and two-sided ideals in von Neumann algebras}

%==========================================================================================
In this section, we show that every (not necessarily closed) Lie ideal $L$ in a von Neumann algebra $M$ is commutator equivalent to the two-sided ideal $I := M[M,L]M$, that is, $[M,L]=[M,I]$;
see \autoref{prp:VNA}.
This recovers \cite[Theorem~5.19]{BreKisShu08LieIdeals}.
Moreover, if $M$ has zero commutative summand, then we show that $I$ is the only two-sided ideal to which $L$ is related, which solves \cite[Problem~5.21]{BreKisShu08LieIdeals}.

Given a two-sided ideal $I$ in a von Neumann algebra with zero commutative summand, every element in $I$ is a sum of products of pairs of square-zero elements in $I^{\frac{1}{2}}$;
see \autoref{prp:IdlVNA}.

%==========================================================================================
\begin{thm}
\label{prp:VNA}
Let $M$ be a von Neumann algebra, and let $L \subseteq M$ be a Lie ideal.
Then, for the two-sided ideal $I:=M[M,L]M$, we have
\[
[M,L]
= [M,[M,L]]
= [M,I].
\]
In particular, $L$ is commutator equivalent to $I$, and hence also embraced by~$I$, and related to~$I$.

If $M$ has zero commutative summand, then $I$ is the only two-sided ideal of $M$ to which $L$ is related.
\end{thm}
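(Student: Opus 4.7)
My plan is to reduce the main claim to the case where $M$ has zero commutative summand, where Theorem~\ref{prp:L-Related-IdlAL} applies via Proposition~\ref{prp:RR0}. Let $p \in Z(M)$ be the central projection such that $pM$ is the (maximal) commutative summand, and set $N := (1-p)M$, which then has zero commutative summand. The first observation is that $pM \subseteq Z(M)$: commutativity of $pM$ together with centrality of $p$ forces $p[M, M] = 0$, so for any $pm \in pM$ and $a \in M$ we get $[a, pm] = p[a, m] = 0$. Consequently both $[M, L]$ and $I := M[M, L]M$ lie inside $N$.

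Next I would verify that $L_N := (1-p)L$ is a Lie ideal of $N$ and that the relevant objects transfer cleanly: a short computation using $[M, pM] = 0$ yields $[N, L_N] = [M, L]$, $N[N, L_N]N = (1-p)I(1-p) = I$, and $[N, I] = [M, I]$. Because $N$ is a von Neumann algebra with zero commutative summand, it has real rank zero and admits no characters (a character on any von Neumann algebra has closed codimension-one kernel, forcing a one-dimensional central summand). Proposition~\ref{prp:RR0} therefore supplies a unital \stHom{} $M_2(\CC) \oplus M_3(\CC) \to N$, and Theorem~\ref{prp:L-Related-IdlAL} applied to $L_N$ in $N$ produces $[N, I] = [N, [N, L_N]]$ together with uniqueness of $I$ as the two-sided ideal of $N$ to which $L_N$ is related.

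To conclude, Proposition~\ref{prp:Sufficient-ZA-AA}(3) supplies $M = Z(M) + [M, M]$, so Lemma~\ref{prp:ZA-AA} immediately gives $[M, L] = [M, [M, L]]$. Chaining this with the identifications $[M, [M, L]] = [N, [N, L_N]] = [N, I] = [M, I]$ from the previous step yields $[M, L] = [M, [M, L]] = [M, I]$ as required, and commutator equivalence formally implies embracement and relation. For the final uniqueness claim, when $M$ itself has zero commutative summand, Theorem~\ref{prp:L-Related-IdlAL} applies directly to $M$ and delivers uniqueness. I expect the main technical burden to be the bookkeeping in the reduction: checking that $L_N$ really is a Lie ideal of $N$ and that the various ideal-level identifications go through, together with the soft but essential fact that a von Neumann algebra has characters if and only if it has a nontrivial one-dimensional central summand.
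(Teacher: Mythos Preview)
Your proposal is correct and follows essentially the same route as the paper: both split off the commutative summand, apply \autoref{prp:RR0} and \autoref{prp:L-Related-IdlAL} to the remaining piece, and then use $M = Z(M) + [M,M]$ together with \autoref{prp:ZA-AA} to obtain $[M,L] = [M,[M,L]]$. The only cosmetic differences are that the paper writes $L_1 = M_1 \cap L$ where you write $L_N = (1-p)L$, and that the paper invokes \autoref{prp:CharLieEmbr} to pass from $[M,L] = [M,[M,L]]$ to $[M,L] = [M,I]$, whereas you chain the identifications directly through \autoref{prp:L-Related-IdlAL}; both are equally valid.
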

\begin{proof}
Let $M=M_0 \oplus M_1$ be the (unique) decomposition of $M$ into a commutative summand $M_0$ and a summand $M_1$ that admits no characters.
By \autoref{prp:RR0}, there is a unital \stHom{} $M_2(\CC) \oplus M_3(\CC) \to M_1$.
Set
\[
L_1 := M_1 \cap L, \andSep
I_1 := M_1 \cap I.
\]
Then $L_1$ is a Lie ideal in $M_1$, and we have
\[
[M,L] = [M_1,L_1], \andSep
I = I_1 = M_1[M_1,L_1]M_1.
\]

We may apply \autoref{prp:L-Related-IdlAL} for the Lie ideal $L_1$ in $M_1$ and obtain that $L_1$ is related to $I_1$, and that $I_1$ is the only two-sided ideal of $M_1$ to which $L_1$ is related.
It follows that $L$ is related to $I$, and if $M_0$ is zero, then $I$ is also unique with this property.
(If $M_0$ is nonzero, then $M_0 \oplus I$ is another two-sided ideal of $M$ to which~$L$ is related.)

As shown in the proof of \cite[Theorem~5.19]{BreKisShu08LieIdeals}, we have $M=Z(M)+[M,M]$.
It follows that $[M,L] = [M,[M,L]]$ by \autoref{prp:ZA-AA}, and then
\[
[M,L]
= [M,[M,L]]
= [M,I].
\]
by \autoref{prp:CharLieEmbr}.
\end{proof}

%==========================================================================================
Recall that $\N_2(I)$ denotes the square-zero elements, and we use $\Nil(I)$ to denote the set of nilpotent elements in an ideal $I$.
Recall the definition of $\FN_2(M,I)$ from \autoref{dfn:FN2}.

%==========================================================================================
\begin{prp}
\label{prp:NilpotentVNA}
Let $I \subseteq M$ be a two-sided ideal in a von Neumann algebra.
%Then every nilpotent element in $I$ belong to $[M,I]$.
Then
\[
[M,I]
= \linSpan_\CC \N_2(I)
= \linSpan_\CC \Nil(I)
= \linSpan_\CC \FN_2(M,I).
\]
\end{prp}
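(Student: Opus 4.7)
The plan is to establish the circular chain of inclusions
\[
[M,I] \subseteq \linSpan_\CC \FN_2(M,I) \subseteq \linSpan_\CC \N_2(I) \subseteq \linSpan_\CC \Nil(I) \subseteq [M,I],
\]
from which all four sets coincide. The two middle inclusions are automatic: $\FN_2(M, I) \subseteq \N_2(I)$ by \autoref{prp:FN2}, and every square-zero element is nilpotent. So only the outer two inclusions require work.

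For $[M, I] \subseteq \linSpan_\CC \FN_2(M, I)$, I will reduce to the case of no characters and invoke \autoref{prp:AAI-AI}. As in the proof of \autoref{prp:VNA}, decompose $M = M_0 \oplus M_1$ with $M_0$ commutative and $M_1$ admitting no characters, and correspondingly $I = I_0 \oplus I_1$. Commutativity forces $[M_0, I_0] = 0$, so $[M, I] = [M_1, I_1]$. Since $M_1$ is a von Neumann algebra without characters, and every von Neumann algebra has real rank zero (via spectral approximation), \autoref{prp:RR0} provides a unital \stHom{} $M_2(\CC) \oplus M_3(\CC) \to M_1$, whence \autoref{prp:AAI-AI} yields $[M_1, I_1] = \linSpan_\CC \FN_2(M_1, I_1) \subseteq \linSpan_\CC \FN_2(M, I)$.

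The main step is $\linSpan_\CC \Nil(I) \subseteq [M, I]$, for which I will use a Jordan-type block decomposition specific to the von Neumann setting. Given a nilpotent $x \in I$ with $x^n = 0$, realize $M$ as acting on a Hilbert space $H$ and, for $j = 0, 1, \ldots, n$, let $P_j \in M$ be the projection onto $\ker x^j$. This projection lies in $M$ because it equals the spectral projection $\chi_{\{0\}}\bigl((x^j)^*(x^j)\bigr)$, defined by Borel functional calculus of a self-adjoint element of the von Neumann algebra. Then $P_0 = 0$, $P_n = 1$, and the differences $Q_j := P_j - P_{j-1}$ are mutually orthogonal projections in $M$ summing to $1$. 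For $\xi \in Q_j H$ one has $x^j \xi = 0$, so $x\xi \in \ker x^{j-1} = P_{j-1} H$; this yields $x Q_j = P_{j-1} x Q_j = \sum_{i<j} Q_i x Q_j$, and summing over $j$ gives
\[
x = \sum_{1 \leq i < j \leq n} Q_i x Q_j.
\]
Each summand $Q_i x Q_j$ belongs to $I$ and satisfies $(Q_i x Q_j)^2 = 0$ since $Q_j Q_i = 0$. More strongly, the factorization $Q_i \cdot x \cdot Q_j$ exhibits $Q_i x Q_j$ as an element of $\FN_2(M, I)$ (with $a = Q_i$, $y = x$, $b = Q_j$ and $ba = Q_j Q_i = 0$), so $Q_i x Q_j \in [M, I]$ by \autoref{prp:FN2}. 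This gives $x \in [M, I]$ and closes the chain.

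The key point, rather than an obstacle, is that in a von Neumann algebra the kernel projections $P_j$ automatically belong to $M$, which is exactly what enables the Jordan-type decomposition. For general \ca{s} this fails, which is precisely why \autoref{qst:Nilpotent} remains open there.
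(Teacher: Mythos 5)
Your proof is correct, and for the decisive inclusion $\linSpan_\CC \Nil(I) \subseteq [M,I]$ it takes a genuinely different route from the paper. You construct the chain of kernel projections $P_j$ of $x^j$ (which lie in $M$ by Borel functional calculus of $(x^j)^*x^j$), pass to the orthogonal differences $Q_j$, and in one stroke decompose $x = \sum_{i<j} Q_i x Q_j$ into strictly upper-triangular blocks, each of which lies in $\FN_2(M,I)$ (with $a = Q_i$, $y = x$, $b = Q_j$, $ba = 0$) and hence in $[M,I]$ by \autoref{prp:FN2}. The paper instead proceeds by induction on the nilpotency degree, paralleling \autoref{prp:DixmierNilpotent} and \cite[Lemma~2.1]{Rob16LieIdeals}: from the polar decomposition $x = v|x|$ in $M$ it writes $x = [v,|x|] + |x|v$, shows via a weak*-limit argument involving the support projection that $y := |x|v$ has strictly smaller nilpotency degree, and applies the induction hypothesis to $y$. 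Both arguments hinge on the same von Neumann phenomenon---that kernel and support projections of elements of $M$ belong to $M$---and, as you correctly observe, this is precisely what fails in a general C*-algebra, leaving \autoref{qst:Nilpotent} open there. Your one-shot Jordan block decomposition avoids the induction and makes the $\FN_2$ membership of the pieces manifest; the paper's inductive argument is formally parallel to the Dixmier-ideal version \autoref{prp:DixmierNilpotent}. The reduction to the zero-commutative-summand case and the application of \autoref{prp:AAI-AI} for $[M,I] = \linSpan_\CC \FN_2(M,I)$ match the paper exactly.
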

\begin{proof}
Let $M=M_0 \oplus M_1$ denote the (unique) decomposition of $M$ into a commutative summand $M_0$ and a summand $M_1$ that admits no characters.
Then
\begin{align*}
[M,I] &= [M_1,M_1 \cap I], \quad
\N_2(I) = \N_2(M_1 \cap I), \\
\Nil(I) &= \Nil(M_1 \cap I), \andSep
\FN_2(M,I) = \FN_2(M_1,M_1\cap I),
\end{align*}
which allows us to reduce the problem to studying the ideal $M_1 \cap I$ in $M_1$.

Without loss of generality, we may therefore assume that $M$ has zero commutative summand.
Then $M$ admits a unital \stHom{} $M_2(\CC) \oplus M_3(\CC) \to M$ by \autoref{prp:RR0}.
Using \autoref{prp:AAI-AI} at the first step, we get
\[
[M,I]
= \linSpan_\CC \FN_2(M,I)
\subseteq \linSpan_\CC \N_2(I)
\subseteq \linSpan_\CC \Nil(I).
\]

It remains to show that $\Nil(I) \subseteq [M,I]$.
As in the proof of \autoref{prp:DixmierNilpotent} (and \cite[Lemma~2.1]{Rob16LieIdeals}), we proceed by induction on the degree of nilpotency.
To start, let $x \in I$ be a square-zero element.
Let $x=v|x|$ be the polar decomposition in $M$.
Then $|x|=v^*x \in I$.
Since $x^2=0$, we have $|x|v=0$, and thus
\[
x 
= \big[ v,|x| \big]
\in [M,I].
\]

Next, assume that for some $k \geq 2$ we have shown that every element $x \in I$ with $x^k=0$ belongs to $[M,I]$.
Let $x \in I$ satisfy $x^{k+1}=0$.
Again, let $x=v|x|$ be the polar decomposition in $M$.
We will show that the element $y:=|x|v$ satisfies $y^k=0$.
Since $x^{k+1}=0$ and $|x|=v^*v|x|$, we have
\[
y^k|x|
= |x|v \cdots |x|v|x|
= v^*v|x|v \cdots |x|v|x|
= v^*x^{k+1}
= 0.
\]
For every polynomial $p$ with vanishing constant term it follows that $y^kp(|x|)=0$.
Using functional calculus, we get $y^k|x|^{\frac{1}{n}}$ for every $n\geq 1$.
The sequence $(|x|^{\frac{1}{n}})_n$ converges in the weak*-topology of $M$ to $v^*v$, which is the support projection of~$|x|$.
We deduce that $y^kv^*v=0$.
Using that $v=vv^*v$, we get
\[
y^k
= y^{k-1}|x|v
= y^{k-1}|x|vv^*v
= y^kv^*v
= 0.
\]
By assumption of the induction, we have $y \in [M,I]$, and thus
\[
x 
= \big[ v,|x| \big] + |x|v
\in [M,I],
\]
as desired.
\end{proof}

%==========================================================================================
\begin{rmk}
Let $H$ be a separable, infinite-dimensional Hilbert space, and let~$I$ be a two-sided ideal in the von Neumann algebra $\Bdd(H)$.
Given a nilpotent element $x \in I$, it follows from \autoref{prp:NilpotentVNA} that $x \in [\Bdd(H),I]$, that is, $x$ is a finite sum of commutators of an element in $\Bdd(H)$ and an element in $I$.
Dykema and Krishnaswamy-Usha showed in \cite[Proposition~3.1]{DykKri18NilpotSingleCommutator} that one summand suffices, that is, we have $x = [y,z]$ for some $y \in \Bdd(H)$ and $z \in I$.
Does their result hold for nilpotent elements in two-sided ideals in arbitrary von Neumann algebras?
\end{rmk}

%==========================================================================================
\begin{thm}
\label{prp:IdlVNA}
Let $M$ be a von Neumann algebra with zero commutative summand, and let $I \subseteq M$ be a two-sided ideal.
Then
\[
I = \big[ M,I^{\frac{1}{2}} \big]^2 = \big[ I^{\frac{1}{4}},I^{\frac{1}{4}} \big]^2, 
\]
and
\[
[M,I] 
\subseteq \big[ [M,I^{\frac{1}{2}}], [M,I^{\frac{1}{2}}] \big] 
\subseteq \big[ [I^{\frac{1}{4}},I^{\frac{1}{4}}], [I^{\frac{1}{4}},I^{\frac{1}{4}}] \big] 
\subseteq [I^{\frac{1}{2}},I^{\frac{1}{2}}]. 
\]
\end{thm}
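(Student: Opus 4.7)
My plan is to establish the two chains by sandwiching between an easy direction and a hard direction. For the easy inclusions: by \autoref{prp:AI-I12-I12} applied to the Dixmier ideal $I^{\frac{1}{2}}$ one has $[M, I^{\frac{1}{2}}] \subseteq [I^{\frac{1}{4}}, I^{\frac{1}{4}}] \subseteq I^{\frac{1}{2}}$, which on multiplying out gives
\[
[M, I^{\frac{1}{2}}]^2 \subseteq [I^{\frac{1}{4}}, I^{\frac{1}{4}}]^2 \subseteq (I^{\frac{1}{2}})^2 = I,
\]
and on passing to the outer commutator gives
\[
\big[ [M, I^{\frac{1}{2}}], [M, I^{\frac{1}{2}}] \big] \subseteq \big[ [I^{\frac{1}{4}}, I^{\frac{1}{4}}], [I^{\frac{1}{4}}, I^{\frac{1}{4}}] \big] \subseteq [I^{\frac{1}{2}}, I^{\frac{1}{2}}].
\]
It will then remain to prove the two non-trivial inclusions $[M, I] \subseteq \big[ [M, I^{\frac{1}{2}}], [M, I^{\frac{1}{2}}] \big]$ and $I \subseteq [M, I^{\frac{1}{2}}]^2$.

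The crux is to bootstrap between \autoref{prp:N2-Dixmier} and \autoref{prp:NilpotentVNA}. Since $M$ has zero commutative summand it admits no characters, so by \autoref{prp:RR0} there is a unital \stHom{} $M_2(\CC) \oplus M_3(\CC) \to M$, and all results of the previous sections apply. Applying \autoref{prp:NilpotentVNA} to the two Dixmier ideals $I$ and $I^{\frac{1}{2}}$ yields $[M, I] = \linSpan_\CC \N_2(I)$ and $\N_2(I^{\frac{1}{2}}) \subseteq [M, I^{\frac{1}{2}}]$, while \autoref{prp:N2-Dixmier} applied to $I$ gives $\N_2(I) \subseteq \linSpan_\CC \big[ \N_2(I^{\frac{1}{2}}), \N_2(I^{\frac{1}{2}}) \big]$. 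Chaining these,
\[
[M, I] = \linSpan_\CC \N_2(I) \subseteq \linSpan_\CC \big[ \N_2(I^{\frac{1}{2}}), \N_2(I^{\frac{1}{2}}) \big] \subseteq \big[ [M, I^{\frac{1}{2}}], [M, I^{\frac{1}{2}}] \big],
\]
which establishes the first non-trivial inclusion.

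With this in hand, the equality $I = [M, I^{\frac{1}{2}}]^2$ will follow cleanly: \autoref{prp:I-Dixmier} gives $I = [M, I] + [M, I^{\frac{1}{2}}]^2$, and since trivially $\big[ [M, I^{\frac{1}{2}}], [M, I^{\frac{1}{2}}] \big] \subseteq [M, I^{\frac{1}{2}}]^2$ (each $ab-ba$ lies in the subspace generated by products of pairs), combining with the inclusion just proved gives $[M, I] \subseteq [M, I^{\frac{1}{2}}]^2$, whence $I \subseteq [M, I^{\frac{1}{2}}]^2$. The equality with $[I^{\frac{1}{4}}, I^{\frac{1}{4}}]^2$ then drops out by sandwiching. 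The main obstacle will be recognising the interlock between \autoref{prp:N2-Dixmier}, which rewrites a square-zero element of $I$ as a commutator of square-zero elements of $I^{\frac{1}{2}}$, and \autoref{prp:NilpotentVNA}, which identifies $[M, I^{\frac{1}{2}}]$ with the linear span of $\N_2(I^{\frac{1}{2}})$; once this observation is made, the rest is formal. It is essential here that $M$ is a von Neumann algebra, because \autoref{prp:NilpotentVNA} -- on which the bootstrap pivots -- relies on polar decompositions and weak-$*$-limits inside $M$.
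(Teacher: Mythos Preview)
Your proposal is correct and follows essentially the same route as the paper's proof: both arguments hinge on combining \autoref{prp:NilpotentVNA} (to identify $[M,I]$ with $\linSpan_\CC \N_2(I)$ and to place $\N_2(I^{\frac{1}{2}})$ inside $[M,I^{\frac{1}{2}}]$) with \autoref{prp:N2-Dixmier} (to push $\N_2(I)$ into commutators of $\N_2(I^{\frac{1}{2}})$), then feed the resulting inclusion $[M,I]\subseteq[M,I^{\frac{1}{2}}]^2$ into \autoref{prp:I-Dixmier}, with \autoref{prp:AI-I12-I12} supplying the remaining sandwiching inclusions. The only cosmetic difference is the order of exposition.
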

\begin{proof}
We will use that $I$ is a Dixmier ideal, since two-sided ideals in von Neumann algebras are automatically Dixmier ideals (\cite[Proposition~3.4]{GarKitThi23arX:SemiprimeIdls}).
Using \autoref{prp:NilpotentVNA} at the first and last step, and using \autoref{prp:N2-Dixmier} at the second step, we have
\[
[M,I]
= \linSpan_\CC \N_2(I) 
\subseteq \linSpan_\CC \big[ \N_2(I^{\frac{1}{2}}),\N_2(I^{\frac{1}{2}}) \big]
= \big[ [M,I^{\frac{1}{2}}],[M,I^{\frac{1}{2}}] \big].
\]
Using \autoref{prp:AI-I12-I12}, we then get
\[
[M,I]
\subseteq \big[ [M,I^{\frac{1}{2}}],[M,I^{\frac{1}{2}}] \big]
\subseteq \big[ [I^{\frac{1}{4}},I^{\frac{1}{4}}],[I^{\frac{1}{4}},I^{\frac{1}{4}}] \big]
\subseteq \big[ I^{\frac{1}{2}},I^{\frac{1}{2}} \big]
\]

By \autoref{prp:RR0}, there is a unital \stHom{} $M_2(\CC) \oplus M_3(\CC) \to M$. 
We can therefore apply \autoref{prp:I-Dixmier} at the first step, and together with the above we obtain that
\[
I 
= [M,I] + \big[ M,I^{\frac{1}{2}} \big]^2
\subseteq \big[ M,I^{\frac{1}{2}} \big]^2.
\]
Applying \autoref{prp:AI-I12-I12} for the Dixmier ideal $I^{\frac{1}{2}}$, and using \autoref{prp:PowersDixmierIdl} at the last step, we get
\[
I 
\subseteq \big[ M,I^{\frac{1}{2}} \big]^2
\subseteq \big[ I^{\frac{1}{4}}, I^{\frac{1}{4}} \big]^2
\subseteq \big( I^{\frac{1}{2}} \big)^2
= I,
\]
as desired.
\end{proof}

%==========================================================================================
\begin{rmk}
It is likely that the inclusions involving $[M,I]$ in \autoref{prp:IdlVNA} are equalities.
The key question is whether $[I^{\frac{1}{2}},I^{\frac{1}{2}}]$ is a subset of $[M,I]$.
Using that $I$ is a Dixmier ideal and therefore admits a well-behaved theory of roots and powers (\autoref{prp:PowersDixmierIdl}), we have
\[
\big[ I^{\frac{1}{2}},I^{\frac{1}{2}} \big]
= \big[ I^{\frac{1}{4}}I^{\frac{1}{4}},I^{\frac{1}{2}} \big]
\subseteq \big[ I^{\frac{1}{4}},I^{\frac{1}{4}}I^{\frac{1}{2}} \big] + \big[ I^{\frac{1}{2}},I^{\frac{1}{2}}I^{\frac{1}{4}} \big]
= \big[ I^{\frac{1}{4}},I^{\frac{3}{4}} \big],
\]
and by an analogous argument $[I^{\frac{1}{4}},I^{\frac{3}{4}}] \subseteq [I^{\frac{1}{8}},I^{\frac{7}{8}}]$.
Inductively, it follows that $[I^{\frac{1}{2}},I^{\frac{1}{2}}] \subseteq [I^{\frac{1}{2^n}},I^{\frac{2^n-1}{2^n}}]$ for every $n \geq 1$, and thus
\[
\big[ I^{\frac{1}{2}},I^{\frac{1}{2}} \big]
\subseteq \bigcap_{n \geq 1} \big[ I^{\frac{1}{2^n}},I^{\frac{2^n-1}{2^n}} \big]
\subseteq \bigcap_{\varepsilon>0} \big[ M,I^{1-\varepsilon} \big].
\]
It is, however, not clear if $\bigcap_{\varepsilon>0} [M,I^{1-\varepsilon}] = [M,I]$.
\end{rmk}

%==========================================================================================
For two-sided ideals $I,J \subseteq \Bdd(H)$, it was shown in \cite[Theorem~5.10]{DykFigWeiWod04CommStrOpIdls} that $[\Bdd(H),IJ] = [I,J]$.
In particular, we have $[\Bdd(H),I] = [I^{\frac{1}{2}},I^{\frac{1}{2}}]$, and we obtain the following consequence of \autoref{prp:IdlVNA}:

%==========================================================================================
\begin{cor}
Let $H$ be a separable, infinite-dimensional Hilbert space, and let $I$ be a two-sided ideal in $\Bdd(H)$.
Then
\[
[\Bdd(H),I] 
= \big[ [\Bdd(H),I^{\frac{1}{2}}], [\Bdd(H),I^{\frac{1}{2}}] \big] 
= \big[ [I^{\frac{1}{4}},I^{\frac{1}{4}}], [I^{\frac{1}{4}},I^{\frac{1}{4}}] \big] 
= \big[ I^{\frac{1}{2}},I^{\frac{1}{2}} \big]. 
\]
\end{cor}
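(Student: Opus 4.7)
The plan is to observe that this corollary is an immediate consequence of combining the previous theorem with the cited result of Dykema--Figiel--Weiss--Wodzicki. First, I would note that for a separable, infinite-dimensional Hilbert space $H$, the algebra $\Bdd(H)$ is a type $I_\infty$ factor, hence a von Neumann algebra with zero commutative summand, so the hypotheses of \autoref{prp:IdlVNA} are satisfied.

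Applying \autoref{prp:IdlVNA} to $M = \Bdd(H)$ and the ideal $I$, we obtain the chain of inclusions
\[
[\Bdd(H),I]
\subseteq \big[ [\Bdd(H),I^{\frac{1}{2}}], [\Bdd(H),I^{\frac{1}{2}}] \big]
\subseteq \big[ [I^{\frac{1}{4}},I^{\frac{1}{4}}], [I^{\frac{1}{4}},I^{\frac{1}{4}}] \big]
\subseteq \big[ I^{\frac{1}{2}},I^{\frac{1}{2}} \big].
\]
The second ingredient is \cite[Theorem~5.10]{DykFigWeiWod04CommStrOpIdls}, which gives $[\Bdd(H), IJ] = [I,J]$ for two-sided ideals $I,J \subseteq \Bdd(H)$. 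Applying this with $J = I^{\frac{1}{2}}$ (so that $I \cdot I^{\frac{1}{2}} = I^{\frac{3}{2}}$, which is not what we want) — more carefully, one uses the identity with $I$ replaced by $I^{\frac{1}{2}}$ and $J = I^{\frac{1}{2}}$, yielding $[\Bdd(H), I^{\frac{1}{2}} \cdot I^{\frac{1}{2}}] = [I^{\frac{1}{2}}, I^{\frac{1}{2}}]$. Since $I^{\frac{1}{2}} \cdot I^{\frac{1}{2}} = I$ by \autoref{prp:PowersDixmierIdl}, this gives $[\Bdd(H), I] = [I^{\frac{1}{2}}, I^{\frac{1}{2}}]$, exactly as stated in the paragraph preceding the corollary.

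Combining the two, the leftmost and rightmost terms in the displayed chain coincide, forcing all four subspaces to be equal. There is no real obstacle here; the work was done in \autoref{prp:IdlVNA} and in the cited result of Dykema--Figiel--Weiss--Wodzicki, and the corollary is simply the statement that the chain closes up into a string of equalities.
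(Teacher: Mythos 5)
Your proof is correct and takes exactly the same route as the paper: apply \autoref{prp:IdlVNA} for the chain of inclusions, then apply the Dykema--Figiel--Weiss--Wodzicki identity $[\Bdd(H),I^{\frac{1}{2}}I^{\frac{1}{2}}]=[I^{\frac{1}{2}},I^{\frac{1}{2}}]$ (together with $I^{\frac{1}{2}}I^{\frac{1}{2}}=I$ from \autoref{prp:PowersDixmierIdl}) to close the chain into equalities. This is precisely the argument sketched in the paragraph preceding the corollary, so nothing is missing.
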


%==========================================================================================
\begin{qst}
Let $I,J \subseteq M$ be two-sided ideals in a von Neumann algebra.
Do we have $[M,IJ]=[I,J]$?
\end{qst}

%==========================================================================================
%==========================================================================================
%\bibliographystyle{../../aomalphaMyShort}
%\bibliography{../../References}

\providecommand{\href}[2]{#2}

\end{document}